\documentclass[11pt,reqno]{amsart}
\usepackage{tikz}
\textheight    23cm
\textwidth     15.cm
\addtolength{\textheight}{-0.75in}
\oddsidemargin   .4cm
\evensidemargin  .4cm
\parskip 6pt
\usepackage{graphicx}
\usepackage{subcaption}
\usepackage{epstopdf}
\usepackage{epsfig}
\usepackage{hyperref}
\usepackage{amsmath}
\usepackage{math}
\graphicspath{{./figures/}}
\usepackage{tikz}
\usetikzlibrary{shapes,arrows}
\tikzstyle{decision} = [diamond, draw, fill=blue!20, 
    text width=4.5em, text badly centered, node distance=3cm, inner sep=0pt]
\tikzstyle{block} = [rectangle, draw, fill=blue!20, 
    text width=5em, text centered, rounded corners, minimum height=4em]
\tikzstyle{line} = [draw, -latex']
\tikzstyle{cloud} = [draw, ellipse,fill=red!20, node distance=3cm,
    minimum height=2em]
\usetikzlibrary{positioning}
\tikzset{main node/.style={circle,fill=blue!20,draw,minimum size=1cm,inner sep=0pt},} 

\newcommand{\la}{\langle}
\newcommand{\ra}{\rangle}

\newcommand{\ts}{\mathsf{T}}



\newcommand{\ba}{\begin{array}}
\newcommand{\ea}{\end{array}}
\newcommand{\be}{\begin{equation}}
\newcommand{\ee}{\end{equation}}
\newcommand{\bea}{\begin{eqnarray}}
\newcommand{\eea}{\end{eqnarray}}
\newcommand{\beaa}{\begin{eqnarray*}}
\newcommand{\eeaa}{\end{eqnarray*}}


\def\div{\mathbf{div}}

%
%

%
%
%

%
%

%

\def\hR{\mathbb{R}}


%

\def\q{\quad}

\def\pa{\partial}

\def\qed{ \hfill \vrule width.25cm height.25cm depth0cm\smallskip}

\newcommand{\basa}{\begin{assumption}}
\newcommand{\easa}{\end{assumption}}

\newcommand{\bas}{\begin{assum}}
\newcommand{\eas}{\end{assum}}

\newcommand{\kl}{\mathrm D_{\mathrm{KL}}}

\def\1{{\bf 1}}

\def\:{\!:\!}

at 9pt

\setlength{\textfloatsep}{5pt}
\setlength\floatsep{3pt}
\setlength\intextsep{2pt}

\begin{document}
\title[Time dependent information dissipation]{Fisher information dissipation for time inhomogeneous stochastic differential equations}
\author[Feng]{Qi Feng}
\address{Department of Mathematics, Florida State University, Tallahassee, FL 32306.}
\email{qfeng2@fsu.edu}
\author[Zuo]{Xinzhe Zuo}
\address{Department of Mathematics, University of California, Los Angeles, CA 90095.}
\email{zxz@math.ucla.edu}
\author[Li]{Wuchen Li}
\address{Department of Mathematics, University of South Carolina, Columbia, SC 29208.}
\email{wuchen@mailbox.sc.edu}
\keywords{Time-dependent Fisher information dissipation; Time-dependent Langevin dynamics.}
\thanks{Q. Feng is partially supported by the National Science Foundation under grant DMS-2306769. X. Zuo and W. Li are supported by AFOSR MURI FA9550-18-1-0502, AFOSR YIP award No. FA9550-23-1-008, and NSF RTG: 2038080.}

\begin{abstract}
We provide a Lyapunov convergence analysis for time-inhomogeneous variable coefficient stochastic differential equations (SDEs). Three typical examples include overdamped, irreversible drift, and underdamped Langevin dynamics. We first formula the probability transition equation of Langevin dynamics as a modified gradient flow of the Kullback-Leibler divergence in the probability space with respect to time-dependent optimal transport metrics. This formulation contains both gradient and non-gradient directions depending on a class of time-dependent target distribution. We then select a time-dependent relative Fisher information functional as a Lyapunov functional. We develop a time-dependent Hessian matrix condition, which guarantees the convergence of the probability density function of the SDE. We verify the proposed conditions for several time-inhomogeneous Langevin dynamics. For the overdamped Langevin dynamics, we prove the $O(t^{-1/2})$ convergence in $L^1$ distance for the simulated annealing dynamics with a strongly convex potential function. For the irreversible drift Langevin dynamics, we prove an improved convergence towards the target distribution in an asymptotic regime. We also verify the convergence condition for the underdamped Langevin dynamics. Numerical examples demonstrate the convergence results for the time-dependent Langevin dynamics.
\end{abstract}

\maketitle

\section{Introduction}
Time-inhomogeneous (time-dependent) stochastic dynamics are an essential class of equations, which are widely used in modeling engineering problems, designing Bayesian sampling algorithms of a target distribution, and approximating global optimization problems with applications in machine learning \cite{CH87,GH86,MCJ,TangZhou2021}. An important example is the stochastic dynamics from the simulated annealing method \cite{Cer85,KGV83}. It finds a global minimizer of a function with a time-dependent diffusion constant. The diffusion constant converges to zero when time approaches infinity. Eventually, the solution of stochastic dynamics will be a global minimizer of such a function. In recent years, general time-dependent stochastic dynamics have also been designed to maintain desired invariant distributions, such as the nonreversible Langevin sampler \cite{DLP,DPZ, ZMS}. The discretized stochastic dynamics are useful stochastic algorithms in practice. In these studies, a key consideration is the rate at which these stochastic dynamics converge to their stationary distributions. The convergence analysis can be leveraged to design and refine sampling algorithms that exhibit faster convergence.

This paper presents the convergence analysis for time-inhomogeneous stochastic dynamics, including three equations: overdamped, nonreversible drift, and underdamped Langevin dynamics. We use the time-dependent Fisher information as a Lyapunov functional to study convergence behaviors of the probability density functions of stochastic dynamics. Applying some convex analysis tools in generalized Gamma calculus \cite{FengLi, FengLi2021}, we derive a time-dependent Hessian matrix condition to characterize convergence behaviors of time-dependent stochastic dynamics in Theorem \ref{thm: Fisher information decay az}. 
Lastly, we present three examples for the proposed convergence analysis. We first study the Lyapunov analysis of time-dependent overdamped Langevin dynamics based on the continuous limit of simulated annealing algorithms. When the potential function is strongly convex, we show that the Fisher information converges at a rate of $O(\frac{1}{t})$ when the diffusion coefficient is $O(\frac{1}{\log t})$, where $t>0$ is a time variable. We then analyze the time-dependent Langevin dynamics with nonreversible drift and a nondegenerate diffusion matrix. We prove the speed-up of the convergence near the global minimizer of the potential function. Lastly, we study the convergence analysis for the inhomogeneous underdamped Langevin dynamics. Several numerical experiments are provided to justify our theoretical results. 

In literature, the convergence study of time-dependent stochastic dynamics is an emerging area for stochastic algorithms in machine learning \cite{Chizat2022}. In this direction, the continuous-time simulated annealing based on time-dependent overdamped Langevin dynamics was first studied in \cite{GH86}. It was shown in \cite{CH87,GH86} that the correct order of diffusion constant for the time-dependent Lanvegin dynamics to converge to the global minimum of the objective function $V$ is of order $(\log t)^{-1}$. Recent works \cite{Chizat2022, M18, MSTW18, TangZhou2021} have shown polynomial convergence in both $L^1$ distance and tail probability. The state-dependent overdamped Langevin dynamics version of simulated annealing was studied in \cite{FQG97, GXZ20}.

Compared to previous results, we focus on the convergence analysis using time-dependent Fisher information functional for general time-inhomogeneous Langevin dynamics. This allows us to derive a Hessian matrix condition in establishing the convergence rates. As a special example, in time-dependent overdamped Langevin dynamics, we obtain a $O(t^{-\frac{1}{2}})$ convergence in $L^1$ distance under the strongly convex assumption of the potential function. On the other hand, analysis on the time-dependent Fisher information dissipation in nonreversible and underdamped Langevin dynamics is still a work in progress. This paper initializes the convergence analysis of these stochastic dynamics.  

The paper is organized as follows. We formulate the main results in sections \ref{sec2} and \ref{sec3}. Using the decay of a time-dependent Fisher information functional, we state the condition for the convergence of general stochastic differential equations. We then present several examples of convergence analysis. Section \ref{sec4} provides the detailed convergence analysis for simulating annealing dynamics with a strongly convex potential function. Section \ref{sec5} presents the convergence analysis for the Langevin dynamics with an irreversible drift and nondegenerate diffusion matrices. Section \ref{sec6} shows the convergence analysis of underdamped Langevin dynamics. Several numerical examples are provided to verify the convergence analysis.  

\section{Setting}\label{sec2}
In this section, we provide the main setting of this paper. We consider the general time-dependent stochastic differential equation. We also formulate its Fokker-Planck equation, for which we develop a time-dependent decomposition of gradient and non-gradient directions in the probability density space. We then introduce the time-dependent relative Fisher information functional, which will be used in the convergence analysis of the solution of the Fokker-Planck equation.

\subsection{General setting}
Consider It\^o type stochastic differential equations (SDEs) in $\mathbb R^{n+m}$ as follows:
\bea \label{SDE setting}
dX_t = b(t,X_t)dt+\sqrt{2}a(t,X_t)dB_t.
\eea 
For $m,n\in\mathbb Z_+$, we assume that $a(t,x)\in \mathbb C^{\infty}(\hR_+\times\mathbb{R}^{n+m}; \mathbb{R}^{(n+m)\times n})$ is a time dependent diffusion matrix, $b(t,x)\in \mathbb C^{\infty}(\hR_+\times\hR^{n+m}; \hR^{n+m})$ is a time dependent vector field, and $B_t$ is a standard $\mathbb R^n$-valued Brownian motion. For the time-dependent diffusion matrix $a(t,x)$, we denote $n$ as the rank of $a(t,x)$, $a(t,x)^{\ts}$ as the transpose of matrix $a(t,x)$, and $a(t,x)a(t,x)^{\ts}$ as the standard matrix multiplication. For $i=1,\cdots, n$,  we denote $a^{\ts}_i=(a(t,x)^{\ts})_i$ as the row vectors of $a(t,x)^{\ts}$, and $a_{\cdot i}=a(t,x)_{\cdot i}$ as the column vectors of $a(t,x)$, i.e. $a^{\ts}_{i\hat i}=a_{\hat ii}$, for $\hat i=1,\cdots, n+m$. 
For each row vector  $a^{\ts}_i\in \mathbb R^{n+m}$ with $i=1,\cdots,n$, we denote $\mathbf A_i(t,x):=\sum_{\hat i=1}^{n+m}a^{\ts}_{i\hat i}\frac{\partial}{\partial x_{\hat i}}$ as the corresponding vector fields for each row vector $a^{\ts}_i$. Similarly, we denote $\mathbf A_0(t, x):=\sum_{\hat i=1}^{n+m}b_{\hat i}(t,x)\frac{\partial}{\partial x_{\hat i}}$ as the vector field associated to the drift term $b$. In this paper, we assume H\"ormander like conditions \cite{hormander} for the vector fields such that the probability density function $p(t,x)$ for the diffusion process $X_t$ exists and is smooth. In the current time inhomogeneous setting, such conditions may include the H\"ormander condition \cite{Cattiaux}, weak H\"ormander condition \cite{hopfner2017}, the UFG (uniformly finitely generated) condition \cite{Cass2021}, and the restricted H\"ormander’s hypothesis \cite{Chaleyat-Maurel}.
Denote $[\mathbf A_i(t,x),\mathbf A_j(t,x)]$, for $i,j\in\{ 0,\cdots,n\}$, as the Lie bracket of two vector fields. The H\"ormander type condition means that the Lie algebra generated by $\mathbf A_i(t,x)$, $1\le i\le n$, and $\mathbf A_0(t,x)+\frac{\partial}{\partial t}$ has full rank. For all $(t,x)$, we assume
\begin{equation*}
    \mathrm{Span}~ \mathrm{Lie}\Big\{\mathbf A_0(t,x)+\frac{\partial}{\partial t}, \mathbf A_1(t, x),\cdots, \mathbf A_n(t,x) \Big\}=\hR^{n+m}.
\end{equation*}

Under the above assumptions, $p(t,x)$, which is the probability density function for $X_t$, satisfies the following Fokker-Planck equation of the SDE \eqref{SDE setting},
\begin{equation}\label{FPE}
\partial_tp(t,x)=-\nabla\cdot(p(t, x)b(t, x))+\sum_{i=1}^{n+m}\sum_{j=1}^{n+m}\frac{\partial^2}{\partial x_i\partial x_j} \Big(\Big(a(t,x)a(t,x)^{\ts}\Big)_{ij}p(t,x)\Big),
\end{equation}
with the following initial condition 
\begin{equation*}
p_0(x)=p(0,x), \quad p_0\in\mathcal P.
\end{equation*}
Here we denote $\mathcal P$ as a probability density space supported on $\mathbb R^{n+m}$, defined as
\begin{equation*}
    \mathcal{P}=\Big\{p\in L^1(\mathbb R^{n+m})\colon \int_{\mathbb{R}^{n+m}} p(x)dx=1,\quad p\geq 0\Big\}. 
\end{equation*}

\subsection{Time dependent Gradient and Non-gradient decompositions}
To study the convergence of the probability density function $p(t,x)$ towards the invariant distribution or the reference distribution $\pi(t,x)$. We make the following decomposition of Fokker-Planck equation \eqref{FPE}. We assume that $\pi(t,x)\in \mathbb C^{2,2}(\mathbb R_+\times\mathbb R^{n+m};\mathbb R)$ has an explicit formula. If $\pi(t,x)$ indeed solves the equation, 
\begin{equation*}
    -\nabla_x\cdot(\pi(t, x)b(t, x))+\sum_{i=1}^{n+m}\sum_{j=1}^{n+m}\frac{\partial^2}{\partial x_i\partial x_j}\Big(\Big(a(t,x)a(t,x)^{\ts}\Big)_{ij} \pi(t,x)\Big)=\partial_t\pi(t,x)=0,
\end{equation*}
then $\pi(t,x)=\pi(x)$ is the invariant distribution. Otherwise, we use $\pi(t,x)$ as a reference distribution for the probability density function $p(t,x)$ at each time $t$.

The Fokker-Planck equation \eqref{FPE} can be decomposed into a gradient and a non-gradient part by introducing a non-gradient vector field $\gamma(t,x):\hR_+\times\hR^{n+m}\rightarrow \hR$. The same decomposition has been used in \cite{FengLi2021}, where the non-gradient vector field $\gamma(x)$ does not depend on the time variable. For self-consistency, we show the decomposition below for the time-dependent vector fields. We first introduce the following notation, for $t\ge 0$, 
\begin{equation}\label{div of matrix}
 \nabla\cdot \Big(a(t,x)a(t,x)^{\ts}\Big)=\Big(\sum_{j=1}^{n+m}\frac{\partial}{\partial x_j}\Big(a(t,x)a(t,x)^{\ts}\Big)_{ij}\Big)_{i=1}^{n+m}\in \mathbb R^{n+m}.
\end{equation}
We then have the following decomposition. 
\begin{proposition}[Decomposition]\label{prop: decomposition}
For the Fokker-Planck equation \eqref{FPE} and a reference distribution density function $\pi(t,x)$, we define a non-gradient vector field $\gamma(t, x): \mathbb R_+\times \mathbb R^{n+m}\rightarrow \mathbb R^{n+m}$ as
 \begin{equation*} 
\gamma(t, x):=  \Big( a(t,x)a(t,x)^{\ts}\Big)\nabla\log \pi(x)-b(x)+\nabla\cdot \Big(a(t,x)a(t,x)^{\ts}\Big).
 \end{equation*}
Then the Fokker-Planck equation \eqref{FPE} {is equivalent to the following equation}:
\begin{equation}\label{FPE1}
\begin{aligned} 
\pa_t p(t,x)=\nabla\cdot \Big(p(t,x)\Big(a(t,x)a(t,x)^{\ts}\Big)\nabla \log \frac{p(t,x)}{\pi(t,x)}\Big)+\nabla\cdot(p(t,x)\gamma(t,x)).
\end{aligned}
\end{equation}
\end{proposition}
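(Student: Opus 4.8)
The plan is to verify \eqref{FPE1} by a direct computation that expands both sides and matches terms. Throughout, write $A(t,x):=a(t,x)a(t,x)^{\ts}$ for the symmetric diffusion matrix, and recall the vector notation \eqref{div of matrix} for $\nabla\cdot A$. The identity is purely algebraic and pointwise in $t$, so the time dependence of $a$, $b$, $\pi$ plays no role beyond carrying it along (in particular the mild notational inconsistency in the statement, $\pi(x),b(x)$ versus $\pi(t,x),b(t,x)$, is harmless: the computation goes through verbatim with the time-dependent objects).

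First I would recast the second-order term of \eqref{FPE} in divergence (Fokker--Planck) form. Applying the Leibniz rule in the inner sum, $\sum_{j}\frac{\partial}{\partial x_j}\big(A_{ij}\,p\big)=p\,(\nabla\cdot A)_i+(A\nabla p)_i$, so that
\[
\sum_{i=1}^{n+m}\sum_{j=1}^{n+m}\frac{\partial^2}{\partial x_i\partial x_j}\big(A_{ij}\,p\big)=\nabla\cdot\big(A\nabla p+p\,\nabla\cdot A\big).
\]
Hence the right-hand side of \eqref{FPE} equals $\nabla\cdot\big(A\nabla p+p\,\nabla\cdot A-p\,b\big)$.

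Next I would expand the right-hand side of \eqref{FPE1}. Since $\nabla\log\frac{p}{\pi}=\frac{\nabla p}{p}-\nabla\log\pi$, the gradient part is $\nabla\cdot\big(A\nabla p-p\,A\nabla\log\pi\big)$; and by the definition of $\gamma$, the non-gradient part is $\nabla\cdot\big(p\,A\nabla\log\pi-p\,b+p\,\nabla\cdot A\big)$. Adding the two, the terms $\pm\,p\,A\nabla\log\pi$ cancel, leaving $\nabla\cdot\big(A\nabla p-p\,b+p\,\nabla\cdot A\big)$, which is exactly the expression obtained for \eqref{FPE}. Since the two equations have the same right-hand side and share the initial datum $p_0$, they are equivalent.

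There is no genuine obstacle here; the one step that requires a little care is the first one, namely recognizing that the non-divergence-form diffusion operator $\sum_{i,j}\partial_{x_i}\partial_{x_j}(A_{ij}\,\cdot)$ can be rewritten in divergence form at the cost of the correction drift $\nabla\cdot A$ --- this is precisely what accounts for the $\nabla\cdot(a a^{\ts})$ summand in the definition of $\gamma$, and it is the same bookkeeping as in the time-independent decomposition of \cite{FengLi2021}.
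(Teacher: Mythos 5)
Your proposal is correct and follows essentially the same route as the paper's proof: both rest on the one nontrivial observation that $\sum_{i,j}\partial_{x_i}\partial_{x_j}(A_{ij}p)$ can be rewritten in divergence form as $\nabla\cdot(A\nabla p + p\,\nabla\cdot A)$ via the Leibniz rule, after which the definition of $\gamma$ collects the remaining terms. The only (cosmetic) difference is that you reduce both sides of the claimed identity to the common form $\nabla\cdot(A\nabla p - p b + p\,\nabla\cdot A)$ and compare, whereas the paper adds and subtracts $\nabla\cdot(p A\nabla\log\pi)$ on one side and regroups, using $\nabla p = p\nabla\log p$ along the way; the underlying computation is the same.
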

\begin{proof}
The proof is based on a direct calculation. For simplicity of notations, we skip the variables $(t,x)$ below. We note
\begin{equation*}
\begin{aligned}
   &\sum_{i=1}^{n+m}\sum_{j=1}^{n+m}\frac{\partial^2}{\partial x_i\partial x_j} \Big(\Big(aa^{\ts}\Big)_{ij}p\Big)
      =\sum_{i=1}^{n+m}\frac{\partial}{\partial x_i}\sum_{j=1}^{n+m}\frac{\partial}{\partial x_j}\Big(\Big(aa^{\ts}\Big)_{ij}p(t,x)\Big)\\
            =&\sum_{i=1}^{n+m}\frac{\partial}{\partial x_i}\sum_{j=1}^{n+m}\Big(\frac{\partial}{\partial x_j}\Big(aa^{\ts}\Big)_{ij}p+\Big(aa^{\ts}\Big)_{ij}\frac{\partial}{\partial x_j}p\Big)\\
 =&\sum_{i=1}^{n+m}\frac{\partial}{\partial x_i}\Big(p\frac{\partial}{\partial x_j}\sum_{j=1}^{n+m}\Big(aa^{\ts}\Big)_{ij}\Big)+\sum_{i,j=1}^{n+m}\frac{\partial}{\partial x_i}\Big(\Big(aa^{\ts}\Big)_{ij}\frac{\partial}{\partial x_j}p\Big)\\
 =&\sum_{i=1}^{n+m}\frac{\partial}{\partial x_i}\Big(p\frac{\partial}{\partial x_j}\sum_{j=1}^{n+m}\Big(a(x)a(x)^{\ts}\Big)_{ij}\Big)+\sum_{i,j=1}^{n+m}\frac{\partial}{\partial x_i}\Big(\Big(aa^{\ts}\Big)_{ij}p\frac{\partial}{\partial x_j}\log p\Big),
\end{aligned}
\end{equation*}
where we used the fact $\frac{\partial}{\partial x_j}p=p\frac{\partial}{\partial x_j}\log p$. From the definition of $\gamma$ and the above observation, we show that the R.H.S. of the Fokker-Planck equation \eqref{FPE} can be written as 
\begin{equation*}
   \begin{aligned}
&-\nabla\cdot(pb)+\sum_{i=1}^{n+m}\sum_{j=1}^{n+m}\frac{\partial^2}{\partial x_i\partial x_j} \Big((aa^{\ts})_{ij}p\Big)\\
=&-\nabla\cdot(p b)+\sum_{i,j=1}^{n+m}\frac{\partial}{\partial x_i}\Big(p\frac{\partial}{\partial x_j}(aa^{\ts})_{ij}\Big)+\nabla\cdot(p(aa^{\ts})\nabla\log p)\\
=&-\nabla\cdot(p b)+\sum_{i,j=1}^{n+m}\frac{\partial}{\partial x_i}\cdot\Big(p\frac{\partial}{\partial x_j}(aa^{\ts})_{ij}\Big)+\nabla\cdot(p(aa^{\ts})\nabla\log\pi)\\
&-\nabla\cdot(p(aa^{\ts})\nabla\log\pi)+\nabla\cdot(p(aa^{\ts})\nabla\log p)\\
=&\nabla\cdot\Big(p (-b+\nabla\cdot(aa^{\ts})+aa^{\ts}\nabla\log\pi)\Big) +\nabla\cdot(paa^{\ts}\nabla\log\frac{p}{\pi})\\
=&\nabla\cdot(p \gamma) +\nabla\cdot\Big(p(aa^{\ts})\nabla\log\frac{p}{\pi}\Big),
\end{aligned}
\end{equation*}
where we used the definition of $\gamma$ and the fact that $\nabla\log\frac{p}{\pi}=\nabla\log p-\nabla\log \pi$. 
  \qed 
\end{proof}
\begin{remark}
The time-dependent hypoelliptic operator $-\nabla\cdot(p aa^{\ts}\nabla)$ is a modified gradient operator in the Wasserstein-2 type metric space \cite{Villani2009_optimal}. And the vector field $\gamma$ is not a gradient direction \cite{Villani2006_hypocoercivity}. Interested readers may look for relevant discussions in the time-homogenous case \cite{FengLi2021}.
\end{remark}

\subsection{Lyapunov functionals}
To measure the distance between $p(t,x)$ and $\pi(t,x)$, as well as the corresponding convergence rate towards $\pi(t,x)$, we define the Kullback–Leibler (KL) divergence
\bea
\kl(p(t,\cdot) \|\pi(t,\cdot)):=\int_{\mathbb R^{n+m}} {p} (t,x)\log \frac{{p} (t,x)}{\pi(t,x )}dx.
\eea
For $t\ge 0$, and a diffusion matrix $a(t,x)\in \mathbb C^{\infty}(\hR_+\times\mathbb{R}^{n+m}; \mathbb{R}^{(n+m)\times n})$ 
associated with SDE \eqref{SDE setting} with rank $n$, we introduce a complementary matrix, defined as, 
\bea\label{defn: z} 
z(t,x)\in \mathbb C^{\infty}(\mathbb R_+\times \mathbb R^{n+m}; \mathbb R^{(n+m)\times m}),
\eea  
such that, for all $t\ge 0$, 
\bea \label{full rank condition}
\text{Rank}\Big(a(t,x)a(t,x)^{\ts}+z(t,x)z(t,x)^{\ts}\Big) =n+m,\quad \text{for all} ~~x\in \hR^{n+m}.
\eea 
Adapted from the previous notation, we denote $a^{\ts}$ and $z^{\ts}$ as the transpose of  matrices $a(t,x)$ and $z(t,x)$. We denote $\{ a^{\ts}_i\}_{i=1}^n$ and $\{ z^{\ts}_j\}_{j=1}^m$ as the row vectors of $a^{\ts}$ and $z^{\ts}$. The condition \eqref{full rank condition} means that the linear span of the row vectors $\{ a^{\ts}_i\}_{i=1}^n$ and $\{ z^{\ts}_j\}_{j=1}^m$ generate the entire space $\hR^{n+m}$ for all $t\ge 0$. Furthermore, to ensure that the Bochner's formula \cite[Theorem 1]{FengLi2021} holds, we assume that, for $0\le k\le m$, $0\le i\le n$, 
\begin{equation}\label{condition: bochner}
    \mathbf Z_k(t,x)\mathbf A_i(t,x)\in \mathrm{Span}\{\mathbf A_j(t,x), 0\le j\le n \},\quad \text{for all}\quad t\ge 0,\quad \text{and}\quad x\in\mathbb R^{n+m},
\end{equation}
where we denote $\mathbf Z_k(t,x)$ as the corresponding vector field for each row vector $z^{\ts}_k$.

For a smooth function $f\in \mathbb C^{\infty}(\hR^{n+m})$, we denote the column vector $\nabla f$ as below,
\begin{equation}
\nabla f(x)= \Big(\frac{\partial f}{\partial x_1,}\cdots, \frac{\partial f}{\partial x_{n+m}} \Big)^{\ts}=  \sum_{i=1}^{n+m}\frac{\partial f}{\partial x_i}\frac{\partial}{\partial x_i}.
\end{equation} 
 We keep the following notation throughout the paper. A standard multiplication of a row vector and a column vector has the following form, 
\begin{equation} 
a^{\ts}_k\nabla f =\sum_{k'=1}^{n+m}a^{\ts}_{kk'}\frac{\partial f}{\partial x_{k'}}.
\end{equation} 
Similarly, we denote 
\bea 
a^{\ts}_k\nabla a^{\ts}_i\nabla f= a^{\ts}_k(\nabla a^{\ts}_i)\nabla f=\sum_{k',i'=1}^{n+m} a^{\ts}_{kk'}\frac{\partial a^{\ts}_{ii'}}{\partial x_{k'}}\frac{\partial f}{\partial x_{i'}},
\eea 
where the gradient is always applied to the function next to it. Given matrices $a(t,x)$, $z(t,x)$, and the reference measure $\pi(t,x)$ as above, we introduce the following relative Fisher information functionals as our Lyapunov functionals. Denote $\la u, v\ra=\sum_{i=1}^{n+m}u_iv_i$, for any vectors $u, v\in\mathbb{R}^{n+m}$. 
\begin{definition}[Fisher information functionals]
Define a functional $\mathcal{I}_a\colon \mathcal{P}\rightarrow\mathbb{R}_+$ as
\bea\label{defn: fisher}
\mathrm I_{a}({p} (t,\cdot)\|\pi(t,\cdot)):=\int_{\mathbb R^{n+m}} \Big\la \nabla\log\frac{{p} (t,x)}{\pi(t,x)},a(t,x)a(t,x)^{\ts}\nabla\log\frac{{p} (t,x)}{\pi(t,x)}\Big\ra {p} (t,x) dx,
\eea
Define an auxiliary functional $\mathcal{I}_z\colon \mathcal{P}\rightarrow\mathbb{R}_+$ as 
\bea\label{defn: fisher z}
\mathrm I_{z}({p} (t,\cdot)\|\pi(t,\cdot)):=\int_{\mathbb R^{n+m}} \Big\la \nabla\log\frac{{p} (t,x)}{\pi(t,x)},z(t,x)z(t,x)^{\ts}\nabla\log\frac{{p} (t,x)}{\pi(t,x)}\Big\ra {p} (t,x) dx.
\eea 
\end{definition}

\section{Time dependent Fisher information decay}\label{sec3}
In this section, we present the main theoretical analysis. We use the time-dependent original and auxiliary Fisher information functionals as Lyapunov functionals for the convergence of the Fokker-Planck equation in Theorem \ref{thm: Fisher information decay az}. 

We shall derive the dissipation of KL divergence and Fisher information along time-inhomogenous equations. We first show the relation between the KL divergence and the Fisher information functional in this time-dependent setting.
\begin{proposition}
For $t\ge 0$, we have
	\begin{equation}\label{KL decay}
	\begin{split}
	\pa_t \kl(p \|\pi)=&-\int_{\mathbb R^{n+m}} \la \nabla\log\frac{p(t,x) }{\pi(t,x)},aa^{\ts}\nabla\log\frac{p(t,x) }{\pi(t,x)}\ra p(t,x)  dx\\
 &-\int_{\mathbb R^{n+m}} \mathcal{R}(t,x,\pi)p(t,x)  dx,
 \end{split}
	\end{equation}
	where we define the correction term $\mathcal{R}(t,x,\pi):\mathbb R_+\times \mathbb R^{n+m}\times \mathcal P\rightarrow \mathbb R$ as below,  
\begin{equation}\label{correction R}
\mathcal{R}(t,x, \pi ):=\frac{\pa_t \pi(t,x) -\nabla\cdot(\pi(t,x)\gamma(t,x))}{\pi(t,x)}.
\end{equation}
\end{proposition}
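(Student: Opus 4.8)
The plan is to differentiate the KL divergence directly under the integral sign and substitute the decomposed Fokker--Planck equation \eqref{FPE1} from Proposition \ref{prop: decomposition}. Writing $\kl(p\|\pi)=\int p\log p\,dx-\int p\log\pi\,dx$, we compute $\pa_t\kl(p\|\pi)=\int \pa_t p\,(\log p-\log\pi)\,dx+\int \pa_t p\,dx-\int p\,\frac{\pa_t\pi}{\pi}\,dx$. The middle term vanishes since $\int p(t,x)\,dx\equiv 1$, so $\pa_t\kl(p\|\pi)=\int \pa_t p\,\log\frac{p}{\pi}\,dx-\int p\,\frac{\pa_t\pi}{\pi}\,dx$. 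This is the natural first step, and it isolates the two contributions that must be matched against the right-hand side of \eqref{KL decay}.

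Next I would insert \eqref{FPE1} for $\pa_t p$ into the term $\int \pa_t p\,\log\frac{p}{\pi}\,dx$ and integrate by parts. The gradient part contributes
\[
\int \nabla\cdot\Big(p\,aa^{\ts}\nabla\log\tfrac{p}{\pi}\Big)\log\tfrac{p}{\pi}\,dx=-\int \big\la \nabla\log\tfrac{p}{\pi},aa^{\ts}\nabla\log\tfrac{p}{\pi}\big\ra p\,dx,
\]
which is precisely the first (Fisher information) term in \eqref{KL decay}. The non-gradient part contributes $\int \nabla\cdot(p\gamma)\log\frac{p}{\pi}\,dx=-\int \big\la \nabla\log\frac{p}{\pi},\gamma\big\ra p\,dx$. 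It remains to show that
\[
-\int \big\la \nabla\log\tfrac{p}{\pi},\gamma\big\ra p\,dx-\int p\,\frac{\pa_t\pi}{\pi}\,dx=-\int \mathcal{R}(t,x,\pi)\,p\,dx,
\]
i.e. that $\la\nabla\log\frac{p}{\pi},\gamma\ra+\frac{\pa_t\pi}{\pi}$ equals $\frac{\pa_t\pi-\nabla\cdot(\pi\gamma)}{\pi}$ after integration against $p$. Expanding $\nabla\cdot(\pi\gamma)=\pi\,\nabla\cdot\gamma+\la\nabla\pi,\gamma\ra=\pi\,\nabla\cdot\gamma+\pi\la\nabla\log\pi,\gamma\ra$, the claimed identity reduces to checking that $\int\la\nabla\log p,\gamma\ra p\,dx=\int(\nabla\cdot\gamma+\la\nabla\log\pi,\gamma\ra)\,p\,dx$ modulo a further integration by parts; indeed $\int\la\nabla\log p,\gamma\ra p\,dx=\int\la\nabla p,\gamma\ra\,dx=-\int p\,\nabla\cdot\gamma\,dx$, and combining terms yields exactly $-\int\mathcal R\,p\,dx$. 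Thus the two ``correction'' contributions collapse into the single term $\int\mathcal R(t,x,\pi)p\,dx$ as stated.

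The main obstacle is purely analytic rather than algebraic: justifying the differentiation under the integral sign and, more importantly, the vanishing of all boundary terms at infinity in the integrations by parts. This requires decay/integrability of $p$, $\nabla\log\frac{p}{\pi}$, $\gamma$ and their products, which should follow from the smoothness and H\"ormander-type assumptions already imposed, together with the standing hypothesis that the relevant Fisher information functionals \eqref{defn: fisher} are finite; I would state these as the regularity conditions under which the identity holds and carry out the formal computation. One should also note the harmless convention that when $\pi$ is the genuine invariant distribution one has $\pa_t\pi=0$ and $\nabla\cdot(\pi\gamma)=0$, so $\mathcal R\equiv 0$ and \eqref{KL decay} recovers the classical de Bruijn-type identity; the time-dependent reference case is the genuinely new content, and there the calculation above is the whole proof.
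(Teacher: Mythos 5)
Your proof is correct and follows essentially the same route as the paper's: differentiate the KL divergence under the integral, substitute the decomposed Fokker--Planck equation \eqref{FPE1}, integrate by parts on the gradient and non-gradient pieces, and combine the non-gradient contribution with the $\pa_t\log\pi$ term to produce the correction term $\mathcal R$. The only blemish is a sign slip in the intermediate claim that the remaining identity ``reduces to'' $\int\la\nabla\log p,\gamma\ra p\,dx=\int(\nabla\cdot\gamma+\la\nabla\log\pi,\gamma\ra)p\,dx$---after canceling the $\la\nabla\log\pi,\gamma\ra$ contributions it should read $\int\la\nabla\log p,\gamma\ra p\,dx=-\int\nabla\cdot\gamma\,p\,dx$---but the integration by parts you actually carry out in the next line and the final conclusion are correct.
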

\begin{remark}
    Note that if $\pi(t,x)=\pi(x)$ is the invariant measure, we have $\nabla\cdot (\pi \gamma)=0$, hence $\mathcal R(t,x,\pi)=0$. However, in the more general setting, $\nabla\cdot(\pi(t,x)\gamma(t,x))\neq 0$ for a general reference measure $\pi(t,x)$. 
\end{remark}
For simplicity of notation in all proofs, we shall denote $\int_{\mathbb{R}^{n+m}}$ as $\int$. We also skip the variables $(t,x)$ to simplify the notation. 

\begin{proof}
We derive the entropy dissipation as below,
\beaa
\pa_t \kl ({p} \|\pi)&=&\int \pa_t {p}  \log\frac{{p} }{\pi}dx+\int {p}  \pa_t\log{p}  dx-\int {p}  \pa_t\log\pi dx\\
&=&\int \big[\nabla\cdot  ( {p}  \gamma )+\nabla\cdot ({p}  aa^{\ts}\nabla \log \frac{{p} }{\pi} ) \big]\log\frac{{p} }{\pi}dx+\int\pa_t{p}  dx-\int {p}  \pa_t\log\pi dx\\
&=&-\int \la \nabla\log\frac{{p} }{\pi},aa^{\ts}\nabla\log\frac{{p} }{\pi}\ra{p}  dx+\int \nabla\cdot ({p} \gamma)\log\frac{{p} }{\pi}dx-\int {p}  \pa_t\log\pi dx.
\eeaa
Furthermore, we have 
\beaa
 \int \nabla\cdot ({p} \gamma)\log\frac{{p} }{\pi}dx&=&-\int \la \nabla\log\frac{{p} }{\pi},\gamma\ra {p}  dx\\
 &=&-\int \la \nabla {p} ,\gamma\ra dx +\int \la \nabla\log\pi,\gamma\ra {p}  dx\\
 &=&\int(\nabla\cdot \gamma +\la \nabla\log\pi,\gamma\ra){p}  dx\\
 &=&\int \frac{{p} }{\pi}[(\nabla\cdot \gamma)\pi +\la \nabla\pi,\gamma\ra ]dx\\
 &=&\int \frac{\nabla\cdot(\pi\gamma)}{\pi}{p}  dx.
 \eeaa
 Combining the above terms, we complete the proof.\qed 
\end{proof}

\begin{lemma}We first observe the following identity,
	\beaa 
	\pa_t\int \mathcal R {p}  dx =\int\Big[  \pa_t\mathcal R -\la \nabla \mathcal R,\gamma\ra +\nabla\cdot(aa^{\ts}\nabla\mathcal{R})+\la \nabla \mathcal R, aa^{\ts}\nabla\log\pi\ra \Big]{p}  dx.
	\eeaa 
\end{lemma}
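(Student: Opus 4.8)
The plan is to compute $\pa_t\int \mathcal R\, p\, dx$ by differentiating under the integral sign, which produces two terms: $\int \pa_t\mathcal R\, p\, dx$ and $\int \mathcal R\, \pa_t p\, dx$. The first term is already in the desired form, so the work is entirely in the second. For that term I would substitute the decomposed Fokker--Planck equation \eqref{FPE1} from Proposition \ref{prop: decomposition}, writing $\pa_t p = \nabla\cdot(p\, aa^{\ts}\nabla\log\frac{p}{\pi}) + \nabla\cdot(p\gamma)$, and then integrate by parts against $\mathcal R$ (assuming enough decay at infinity that boundary terms vanish — this is the standing regularity assumption). After one integration by parts each term becomes $-\int \la\nabla\mathcal R, aa^{\ts}\nabla\log\frac{p}{\pi}\ra p\, dx - \int\la\nabla\mathcal R,\gamma\ra p\, dx$. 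The second of these is already the $-\la\nabla\mathcal R,\gamma\ra$ contribution in the claimed identity, so nothing more is needed there.

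The remaining job is to rewrite $-\int \la\nabla\mathcal R, aa^{\ts}\nabla\log\frac{p}{\pi}\ra p\, dx$ as $\int[\nabla\cdot(aa^{\ts}\nabla\mathcal R) + \la\nabla\mathcal R, aa^{\ts}\nabla\log\pi\ra]p\, dx$. I would do this with a second integration by parts: using $p\,\nabla\log\frac{p}{\pi} = \nabla p - p\,\nabla\log\pi$, the integrand is $-\la aa^{\ts}\nabla\mathcal R, \nabla p\ra + \la aa^{\ts}\nabla\mathcal R, \nabla\log\pi\ra p$. Integrating the first piece by parts moves the divergence onto the symmetric matrix factor, giving $\int p\,\nabla\cdot(aa^{\ts}\nabla\mathcal R)\, dx$ (using symmetry of $aa^{\ts}$ so that $\la aa^{\ts}\nabla\mathcal R,\nabla p\ra = \la\nabla p, aa^{\ts}\nabla\mathcal R\ra$), and the second piece is already in the required form. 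Collecting everything yields exactly the stated identity.

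The main obstacle is not conceptual but bookkeeping: one must be careful that $\mathcal R$, defined in \eqref{correction R} via a quotient involving $\pa_t\pi$ and $\nabla\cdot(\pi\gamma)$, is smooth enough and that all integrations by parts are justified — i.e., that $p$, $\nabla p$, $\mathcal R$, $\nabla \mathcal R$ and the coefficient matrices decay sufficiently fast that no boundary terms survive. Under the smoothness and H\"ormander-type hypotheses already assumed in Section \ref{sec2}, together with the implicit integrability needed for $\kl(p\|\pi)$ and the Fisher information functionals to be finite, this is taken for granted, so the proof reduces to the two integrations by parts sketched above plus the algebraic identity $\nabla\log\frac{p}{\pi} = \nabla\log p - \nabla\log\pi$.
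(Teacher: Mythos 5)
Your proof is correct and follows essentially the same route as the paper: differentiate under the integral, substitute the decomposed Fokker--Planck equation \eqref{FPE1}, integrate by parts once against $\mathcal R$, split $\nabla\log\frac{p}{\pi}=\nabla\log p-\nabla\log\pi$, and integrate by parts a second time using $p\nabla\log p=\nabla p$ (and symmetry of $aa^{\ts}$). The only difference is cosmetic: you make the symmetry of $aa^{\ts}$ and the decay assumptions explicit, whereas the paper leaves them implicit.
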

\begin{proof}
	We observe that,
	\beaa
	\pa_t \int \mathcal R {p}  dx=\int \pa_t\mathcal R {p}  dx +\int \mathcal R \pa_t{p}  dx.
	\eeaa 
	And
\beaa
\int \mathcal{R}\pa_t{p}  dx&=& \int \mathcal{R}\Big[\nabla\cdot({p}  aa^{\ts}\nabla\log\frac{{p} }{\pi} )+\nabla\cdot({p} \gamma) \Big]dx\\
&=&-\int \la \nabla \mathcal{R}, aa^{\ts}\nabla\log\frac{{p} }{\pi}\ra{p}  dx-\int \la \nabla \mathcal{R},\gamma\ra {p}  dx\\
&=& -\int \la \nabla \mathcal R, aa^{\ts}\nabla \log {p} \ra {p}  dx+\int \la \nabla \mathcal R, aa^{\ts}\nabla \log \pi\ra {p}  dx -\int \la \nabla\mathcal R, \gamma\ra {p}  dx\\
&=& \int \nabla\cdot (aa^{\ts} \nabla \mathcal R) {p}  dx+\int \la \nabla \mathcal R, aa^{\ts}\nabla \log \pi\ra {p}  dx -\int \la \nabla\mathcal R, \gamma\ra {p}  dx.
\eeaa \qed
\end{proof}

\subsection{Fisher information decay}
In this subsection, we first present the Fisher information functional dissipation result. The proof will be postponed to Section \ref{sec fish a}, and Section \ref{sec fish z}. To simplify our notation, we define 
\begin{equation}
    \mathrm I_{a,z}(t)= \mathrm{I}_a(p\|\pi)+\mathrm{I}_z(p\|\pi).
\end{equation}
\begin{theorem}[Fisher decay]\label{thm: Fisher information decay az}
We define $\mathfrak{R}(t,x):\mathbb R_+\times  \mathbb R^{n+m}\rightarrow \mathbb{R}^{(n+m)\times (n+m)}$ as the corresponding time-dependent Hessian matrix function, which is defined in the Appendix \ref{appendix}. Assume that 
	\bea\label{assumption: a z tensor}
	\mathfrak R(t,x)-\frac{1}{2}\pa_t(aa^{\ts}+zz^{\ts})(t,x)\succeq \lambda(t) [aa^{\ts}+zz^{\ts}](t,x),
	\eea 
	for all $x\in\mathbb R^{n+m}$, and for all $t\ge 0$.
	We have 
	\begin{equation*}
    \mathrm I_{a,z}(t)\leq  e^{-2\int_{t_0}^t \lambda(r)dr}\Big(\int_{t_0}^t 2[\mathrm A(r)+\mathrm Z(r)]e^{2\int_{t_0}^r\lambda(\tau)d\tau} dr+\mathrm I_{a,z}(t_0)\Big),
\end{equation*}
where the correction term $\mathcal R(\cdot,\cdot,\cdot)$ is introduced in \eqref{correction R}. And
\beaa
\mathrm A(r) &=& \int_{\mathbb R^{n+m}}\big[
\nabla\cdot(aa^{\ts}\nabla\mathcal{R})+ \la \nabla \mathcal R, aa^{\ts}\nabla\log\pi\ra \big]{p} (r,x) dx,\\
\mathrm Z(r) &=&\int_{\mathbb R^{n+m}}\big[
\nabla\cdot(zz^{\ts}\nabla\mathcal{R})+\la \nabla \mathcal R, zz^{\ts}\nabla\log\pi\ra \big]{p} (r,x) dx.
\eeaa

\end{theorem}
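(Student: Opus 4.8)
The plan is to run a Grönwall-type argument on the combined Fisher information functional $\mathrm I_{a,z}(t)$, so the first task is to compute $\frac{d}{dt}\mathrm I_{a,z}(t)$. I would differentiate $\mathrm I_a(p\|\pi)+\mathrm I_z(p\|\pi)$ directly, using the equivalent form \eqref{FPE1} of the Fokker-Planck equation for the $\partial_t p$ terms and differentiating $\log\pi$ separately for the $\partial_t\pi$ contributions. After integrating by parts in the gradient part, one expects to recover (i) a ``good'' dissipation term that, by a Bochner-type identity as in \cite[Theorem 1]{FengLi2021}, equals $-2\int\langle\nabla\log\frac p\pi,\mathfrak R\,\nabla\log\frac p\pi\rangle p\,dx$ plus lower-order curvature contributions; (ii) the explicit $-\frac12\partial_t(aa^\ts+zz^\ts)$ term coming from the time-dependence of the coefficients inside the functional; and (iii) the remaining $\mathcal R$-dependent terms, which after integration by parts against $p$ should assemble exactly into $2[\mathrm A(t)+\mathrm Z(t)]$. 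The auxiliary Lemma on $\partial_t\int\mathcal R\,p\,dx$ and the identity for $\partial_t\kl(p\|\pi)$ in the preceding Proposition are the bookkeeping devices that let the $\gamma$- and $\nabla\log\pi$-terms cancel correctly; I would invoke these to avoid recomputing those pieces.

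The second step is to insert the hypothesis \eqref{assumption: a z tensor}. Writing $u=\nabla\log\frac p\pi$, the matrix inequality $\mathfrak R-\frac12\partial_t(aa^\ts+zz^\ts)\succeq\lambda(t)[aa^\ts+zz^\ts]$ tested against $u$ and integrated against $p$ gives
\begin{equation*}
\frac{d}{dt}\mathrm I_{a,z}(t)\le -2\lambda(t)\,\mathrm I_{a,z}(t)+2[\mathrm A(t)+\mathrm Z(t)].
\end{equation*}
The third and final step is to solve this linear differential inequality: multiplying by the integrating factor $e^{2\int_{t_0}^t\lambda(r)dr}$, one has $\frac{d}{dt}\big(e^{2\int_{t_0}^t\lambda}\mathrm I_{a,z}(t)\big)\le 2[\mathrm A(t)+\mathrm Z(t)]e^{2\int_{t_0}^t\lambda}$, and integrating from $t_0$ to $t$ yields the stated estimate.

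The main obstacle is step one: correctly identifying, after all integrations by parts, that the second-order (Bochner) part of $\frac{d}{dt}\mathrm I_{a,z}$ is precisely $-2\int\langle u,\mathfrak R\,u\rangle p\,dx-\int\langle u,\partial_t(aa^\ts+zz^\ts)u\rangle p\,dx+2[\mathrm A+\mathrm Z]$, with no leftover terms. This requires the Bochner/$\Gamma_2$-type machinery of generalized Gamma calculus — in particular the hypoellipticity conditions \eqref{condition: bochner} and the full-rank condition \eqref{full rank condition} are exactly what make the integration-by-parts identities valid and force the cross terms involving $z$ to close within $\mathrm{Span}\{\mathbf A_j\}$. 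I expect the bulk of the work (deferred to Sections \ref{sec fish a} and \ref{sec fish z}, matching the paper's stated organization) to be the separate treatment of the $\mathrm I_a$ and $\mathrm I_z$ pieces and the verification that their time-derivatives combine into the single curvature matrix $\mathfrak R$ defined in Appendix \ref{appendix}; the Grönwall step itself is routine once that identity is in hand.
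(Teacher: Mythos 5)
Your proposal is correct and follows essentially the same route as the paper: derive the differential inequality $\partial_t\mathrm I_{a,z}\le -2\lambda(t)\mathrm I_{a,z}+2[\mathrm A+\mathrm Z]$ from the dissipation formula (Proposition \ref{prop: Fisher information decay}, which the paper assembles from the separate $\mathrm I_a$ and $\mathrm I_z$ computations in Sections \ref{sec fish a} and \ref{sec fish z}, exactly as you anticipate), then close with Gr\"onwall. Your integrating-factor presentation of the final step is an equivalent phrasing of the paper's construction of the auxiliary solution $Q(t)$; no substantive difference.
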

\begin{proof}
From the definition, we have $\mathrm I_{a,z}(t)=\mathrm I_{a,z}({p} \|\pi)=\mathrm I_{a}({p} \|\pi)+\mathrm I_{z}({p} \|\pi)$.	According to Proposition \ref{prop: Fisher information decay} in the next section, and Assumption \eqref{assumption: a z tensor}, we have 
\begin{equation*}
    \pa_t\mathrm I_{a,z}({p} \|\pi)\leq -2\lambda(t) \mathrm{I}_{a,z}({p} \|\pi)+2[\mathrm A(t)+\mathrm Z(t)]. 
\end{equation*}
We next construct a function $Q(t)$, such that 
\beaa 
\pa_t Q(t)+2\lambda(t)Q(t)=2[\mathrm A(t)+\mathrm Z(t)].
\eeaa 
Let $F(t)=-2\int_{t_0}^t \lambda(r) dr $. We obtain $Q(t)=Q(t_0) e^{F(t)}+e^{F(t)}\int_{t_0}^t 2[\mathrm A(s)+\mathrm Z(s)]e^{-F(s)}ds$, which implies 
\beaa 
\pa_t (\mathrm I_{a,z}(t)-Q(t))\le -2\lambda(t)(\mathrm I_{a,z}(t)-Q(t)). 
\eeaa 
From Gronwall's inequality, we have
\beaa 
\mathrm I_{a,z}(t)&\le& Q(t)+(\mathrm I_{a,z}(t_0)-Q(t_0))e^{-2\int_{t_0}^t\lambda(r)dr}\\
&= & Q(t_0)e^{-2\int_{t_0}^t \lambda(r)dr}+e^{\int_{t_0}^\lambda(r)dr}\int_{t_0}^t 2[\mathrm A(r)+\mathrm Z(r)]e^{\int_{t_0}^r2\lambda(\tau)d\tau} dr\\
& &+(\mathrm I_a(t_0)-Q(t_0))e^{-2\int_{t_0}^t\lambda(r)dr}\nonumber \\
&= & e^{-2\int_{t_0}^t \lambda(r)dr}\Big(\int_{t_0}^t 2[\mathrm A(r)+\mathrm Z(r)]e^{2\int_{t_0}^r\lambda(\tau)d\tau} dr+\mathrm I_{a,z}(t_0)\Big).\nonumber
\eeaa
This finishes the proof. \qed
\end{proof}

\subsection{Information Gamma calculus} To derive the dissipation of the Fisher information functional, we first introduce the information Gamma calculus in the current setting. We refer to \cite{FengLi, FengLi2021, BFL2022} for more motivations and detailed discussions on these operators. We follow closely the notations as in \cite[Definition 2]{FengLi2021} below. 
Following the decomposition in Proposition \ref{prop: decomposition}, the diffusion operator $L$ associated with SDE \eqref{SDE setting} is defined in the following form, for smooth function $f:\mathbb R^{n+m}\rightarrow\mathbb R$,  
\begin{equation}\label{operator L}
\begin{aligned}
Lf=\widetilde Lf -\la \gamma(t,x),\nabla f\ra,
\end{aligned}
\end{equation}
where we define the reversible component of the diffusion operator $L$ as below, 
\bea
\widetilde L f&=& \nabla\cdot(a(t,x)a(t,x)^{\ts}\nabla f)+\la a(t,x)a(t,x)^{\ts}\nabla \log\pi(t,x),\nabla f\ra.
\eea
For the diffusion matrix function $a(t,x)$, we construct a matrix $z(t,x)\in\mathbb C^{\infty}(\mathbb R_+\times \mathbb R^{n+m}; \mathbb R^{(n+m)\times m })$ such that conditions \eqref{full rank condition}, \eqref{condition: bochner}, and the H\"ormander condition hold true. We then introduce the following $z$-direction differential operator as
\begin{equation*}\label{z operator}
\widetilde L_z f=\nabla\cdot(z(t,x)z(t,x)^{\ts}\nabla f)+\la z(t,x)z(t,x)^{\ts}\nabla \log\pi(t,x),\nabla f\ra.   
\end{equation*}
The Gamma one bilinear forms for the matrices $a(t,x)$ and $z(t,x)$ are defined as below, $\Gamma_{1},\Gamma_{1}^z\colon \mathbb C^{\infty}(\mathbb{R}^{n+m})\times \mathbb C^{\infty}(\mathbb{R}^{n+m})\rightarrow \mathbb C^{\infty}(\mathbb{R}^{n+m})$ as 
\bea
\Gamma_{1}(f,f)=\la a(t,x)^{\ts}\nabla f, a(t,x)^{\ts}\nabla f\ra_{\hR^n},\quad \Gamma_{1}^z(f,f)=\la z(t,x)^{\ts}\nabla f, z(t,x)^{\ts}\nabla f\ra_{\hR^m}.
\eea
\begin{definition}[Time dependent Information Gamma operators]\label{defn:tilde gamma 2 znew}
Define the following three bi-linear forms: 
\begin{equation*}
 \widetilde\Gamma_{2}, \widetilde\Gamma_{2}^{z,\pi}, \Gamma_{\mathcal{I}_{a,z}}\colon C^{\infty}(\mathbb{R}^{n+m})\times C^{\infty}(\mathbb{R}^{n+m})\rightarrow C^{\infty}(\mathbb{R}^{n+m}).
\end{equation*}
\begin{itemize}
\item[(i)] Gamma two operator: 
\begin{equation*}
\widetilde\Gamma_{2}(f,f)=\frac{1}{2}\widetilde L\Gamma_{1}(f,f)-\Gamma_{1}(\widetilde Lf, f).
\end{equation*}
\item[(ii)] Generalized Gamma $z$ operator:
\beaa
\widetilde\Gamma_2^{z,\pi}(f,f)&=&\quad\frac{1}{2}\widetilde L\Gamma_1^{z}(f,f)-\Gamma_{1}^z(\widetilde Lf,f)\\
&& \label{new term}+\div^{{\pi}}_z\Big(\Gamma_{1,\nabla(aa^{\ts})}(f,f )\Big)-\div^{{\pi} }_a\Big(\Gamma_{1,\nabla(zz^{\ts})}(f,f )\Big).
  \eeaa 
Here $\div^{{\pi} }_a$, $\div^{{\pi} }_z$ are divergence operators defined by
\begin{equation*}
\div^{{\pi}}_a(F)=\frac{1}{{\pi} }\nabla\cdot({\pi} aa^{\ts} F), \quad\div^{{\pi} }_z(F)=\frac{1}{{\pi} }\nabla\cdot({\pi} zz^{\ts}F),
\end{equation*}
for any smooth vector field $F\in \mathbb{R}^{n+m}$, and $\Gamma_{1, \nabla (aa^{\ts})}$, $\Gamma_{1, \nabla (zz^{\ts})}$ are vector Gamma one bilinear forms defined by 
\beaa
\Gamma_{1,\nabla(aa^{\ts)}}(f,f)&=&\la \nabla f,\nabla(aa^{\ts})\nabla f\ra=(\la \nabla f,\frac{\partial}{\partial x_{\hat k}}(aa^{\ts})\nabla f\ra)_{\hat k=1}^{n+m},\\
\Gamma_{1,\nabla(zz^{\ts)}}(f,f)&=&\la \nabla f,\nabla(aa^{\ts})\nabla f\ra=(\la \nabla f,\frac{\partial}{\partial x_{\hat k}}(zz^{\ts})\nabla f\ra)_{\hat k=1}^{n+m}.
\eeaa
\item[(iii)] Irreversible Gamma operator: 
\beaa
\Gamma_{\mathcal{I}_{a}}(f,f)+\Gamma_{\mathcal{I}_{z}}(f,f)&=& (\widetilde Lf+\widetilde L_zf) \la \nabla f,\gamma\ra -\frac{1}{2}\la \nabla \big(\Gamma_1(f,f)+\Gamma_1^z(f,f)\big),\gamma\ra .
\eeaa
\end{itemize}
\end{definition}
	\begin{remark}
 One key difference in the current setting compared to \cite{FengLi2021} is the fact $\nabla\cdot (\pi(t,x)\gamma(t,x))$ can be non-zero. Due to the decomposition of operator $L$ in \eqref{operator L}, the time dependent vector field $\gamma(t,x)$ does not make a difference for the second order operators $\widetilde\Gamma_2$ and $\widetilde\Gamma_2^{z,\pi}$. Thus the Bochner's formula for $\widetilde\Gamma_2$ and $\widetilde\Gamma_2^{z,\pi}$ remains the same as in \cite{FengLi2021}. The expressions for $\Gamma_{\mathcal{I}_{a}}(f,f)$ and $\Gamma_{\mathcal{I}_{z}}(f,f)$ are different, see Lemma \ref{lemma a: 2} and Lemma \ref{lemma z: 2} below. 
	\end{remark}
 
By using the time dependent Information Gamma operator defined above, we have the following estimates for the first order dissipation of the Fisher Information functional.
\begin{proposition}\label{prop: Fisher information decay}
    \begin{equation}
    \begin{split}
             &\partial_t [\mathrm I_a(p\|\pi ) + \mathrm I_z(p\|\pi) ]\\
              \le &-2\int  \mathfrak R(\nabla \log\frac{{p} }{\pi},\nabla \log\frac{{p} }{\pi})p  dx+\int \la \nabla\log\frac{{p} }{\pi},\pa_t(aa^{\ts}+zz^{\ts})\nabla\log\frac{{p} }{\pi}\ra {p}  dx\\
& +\int\big[
2\nabla\cdot((aa^{\ts}+zz^{\ts})\nabla\mathcal{R})+2\la \nabla \mathcal R,( aa^{\ts}+zz^{\ts})\nabla\log\pi\ra \big]{p}  dx,
              \end{split}
    \end{equation}
where the correction term $\mathcal R(t,x,\pi)$ is defined in \eqref{correction R}. And the Hessian matrix $\mathfrak R(t,x)$ is defined in the Appendix \ref{appendix}.
\end{proposition}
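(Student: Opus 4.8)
The plan is to compute $\partial_t\big[\mathrm I_a(p\|\pi)+\mathrm I_z(p\|\pi)\big]$ by hand, starting from $\mathrm I_a+\mathrm I_z=\int[\Gamma_1(g,g)+\Gamma_1^z(g,g)]p\,dx$ with $g:=\log\frac{p}{\pi}$, and to reorganize the outcome — via repeated integration by parts and the Gamma-calculus identities of Definition \ref{defn:tilde gamma 2 znew} — into the three terms on the right of the claimed inequality, the final bound by $\mathfrak R$ being the Bochner formula. The one preliminary computation is an evolution equation for $g$: dividing the decomposed Fokker--Planck equation \eqref{FPE1} by $p$, using $\nabla\log p=\nabla g+\nabla\log\pi$ together with $\frac1p\nabla\cdot(p\,aa^{\ts}\nabla g)=\widetilde Lg+\Gamma_1(g,g)$ and $\frac1p\nabla\cdot(p\gamma)=\la\gamma,\nabla g\ra+\frac{\nabla\cdot(\pi\gamma)}{\pi}$, then subtracting $\partial_t\log\pi$ and recalling \eqref{correction R}, one gets
\begin{equation*}
\partial_t g=\widetilde Lg+\Gamma_1(g,g)+\la\gamma,\nabla g\ra-\mathcal R .
\end{equation*}

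Next I would differentiate the functional. Writing $S:=\widetilde Lg+\Gamma_1(g,g)$ and $T:=\widetilde L_zg+\Gamma_1^z(g,g)$, and using $\partial_t\Gamma_1(g,g)=\la\nabla g,\partial_t(aa^{\ts})\nabla g\ra+2\Gamma_1(g,\partial_t g)$ (and the analogue for $z$), the identities $2\int[\Gamma_1+\Gamma_1^z](g,\partial_t g)p\,dx=-2\int(S+T)\,\partial_t g\,p\,dx$ (move $\nabla$ off $\partial_t g$ and use $S p=\nabla\cdot(p\,aa^{\ts}\nabla g)$, $T p=\nabla\cdot(p\,zz^{\ts}\nabla g)$) and $\int[\Gamma_1+\Gamma_1^z](g,g)\,\partial_t p\,dx=\int L\big([\Gamma_1+\Gamma_1^z](g,g)\big)p\,dx$ (from \eqref{FPE1} after integration by parts), and finally substituting the $\partial_t g$ equation, I obtain
\begin{equation*}
\partial_t\big[\mathrm I_a+\mathrm I_z\big]=\int\la\nabla g,\partial_t(aa^{\ts}+zz^{\ts})\nabla g\ra p\,dx-2\int(S+T)\big(S+\la\gamma,\nabla g\ra\big)p\,dx+\int L\big([\Gamma_1+\Gamma_1^z](g,g)\big)p\,dx+2\int(S+T)\mathcal R\,p\,dx .
\end{equation*}
The first term is already the middle term of the statement. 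For the last, I would rewrite $2\int(S+T)\mathcal R\,p\,dx=2\int\mathcal R\,\big[\nabla\cdot(p\,aa^{\ts}\nabla g)+\nabla\cdot(p\,zz^{\ts}\nabla g)\big]dx$ and integrate by parts twice, using $p\nabla g=\nabla p-p\nabla\log\pi$; this produces exactly $2\int\big[\nabla\cdot\big((aa^{\ts}+zz^{\ts})\nabla\mathcal R\big)+\la\nabla\mathcal R,(aa^{\ts}+zz^{\ts})\nabla\log\pi\ra\big]p\,dx$, the third term of the statement.

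It then remains to handle $-2\int(S+T)\big(S+\la\gamma,\nabla g\ra\big)p\,dx+\int L\big([\Gamma_1+\Gamma_1^z](g,g)\big)p\,dx$. Following the integration-by-parts computation of \cite{FengLi2021} — using $p=\rho\pi$, the $\pi\,dx$-self-adjointness of $\widetilde L$ and $\widetilde L_z$, the operators $\div^{\pi}_a,\div^{\pi}_z$ from Definition \ref{defn:tilde gamma 2 znew}, and condition \eqref{condition: bochner} — the $\gamma$-free part collapses into $-2\int\big[\widetilde\Gamma_2(g,g)+\widetilde\Gamma_2^{z,\pi}(g,g)\big]p\,dx$ (this piece is unchanged from \cite{FengLi2021}, as recorded in the remark after Definition \ref{defn:tilde gamma 2 znew}), while the $\gamma$-dependent part reduces, via Lemma \ref{lemma a: 2} and Lemma \ref{lemma z: 2}, to $-2\int\big[\Gamma_{\mathcal I_a}(g,g)+\Gamma_{\mathcal I_z}(g,g)\big]p\,dx$ together with a remainder proportional to $\nabla\cdot(\pi\gamma)$ (which need not vanish here). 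Collecting everything and invoking the Bochner formula \cite[Theorem~1]{FengLi2021} — valid under \eqref{condition: bochner} and the H\"ormander condition — one has the pointwise lower bound $\widetilde\Gamma_2(g,g)+\widetilde\Gamma_2^{z,\pi}(g,g)+\Gamma_{\mathcal I_a}(g,g)+\Gamma_{\mathcal I_z}(g,g)\ge \mathfrak R(\nabla g,\nabla g)$ after discarding a nonnegative generalized Hessian square, where $\mathfrak R$ is the matrix from the Appendix; integrating against $p\ge 0$ then yields the inequality.

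The main obstacle is this last step. In the variable-coefficient, time-dependent setting the rearrangement of the $\gamma$-terms and the application of the Bochner formula must be done carefully enough to confirm that the only genuinely new contributions, beyond the time-homogeneous computation in \cite{FengLi2021}, are the $\partial_t(aa^{\ts}+zz^{\ts})$ term and the $\mathcal R$-term already isolated — and that the extra pieces forced by $\nabla\cdot(\pi\gamma)\neq 0$ (the reason $\pi$ is only a reference measure, not an invariant one) are correctly absorbed into the Appendix definition of $\mathfrak R$. This bookkeeping, together with checking the hypotheses that make the Bochner formula applicable, is the delicate part, and it is precisely what Lemma \ref{lemma a: 2} and Lemma \ref{lemma z: 2} are designed to carry out.
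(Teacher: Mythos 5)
Your proposal is correct and follows essentially the same path as the paper: you reduce the time derivative to the $\widetilde\Gamma_2$, $\widetilde\Gamma_2^{z,\pi}$, and irreversible Gamma operators (via the same integration-by-parts identities captured in Lemmas \ref{lemma a: 1}--\ref{lemma z: 2}) and close with the Bochner formula from \cite{FengLi2021}, just as the paper does by combining Propositions \ref{prop: dissipation of fisher a} and \ref{prop: dissipation of fisher z}. Your entry point is organized a bit more compactly — you package the Fokker--Planck equation, the time dependence of $\pi$, and the non-gradient drift into the single evolution equation $\partial_t g=\widetilde Lg+\Gamma_1(g,g)+\la\gamma,\nabla g\ra-\mathcal R$ and substitute, whereas the paper's Lemmas \ref{lemma a: 1} and \ref{lemma z: 1} split $\nabla\partial_t\log\tfrac{p}{\pi}$ into $\nabla\partial_t\log p$ and $\nabla\partial_t\log\pi$ and handle each piece separately — but the resulting intermediate identities, including the cancellation of the $\nabla\cdot(\pi\gamma)$ remainder against the one from Lemmas \ref{lemma a: 2} and \ref{lemma z: 2}, are the same.
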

\begin{proof} Combining Proposition \ref{prop: dissipation of fisher a} and Proposition \ref{prop: dissipation of fisher z} below, we have 
    \begin{equation}
        \begin{split}
                  &\partial_t [\mathrm I_a(p\|\pi ) + \mathrm I_z(p\|\pi) ]\\
              =& -2\int\big[\widetilde \Gamma_2(\log\frac{{p} }{\pi},\log\frac{{p} }{\pi}) \big]{p}  dx
+\int \la \nabla\log\frac{{p} }{\pi},\pa_t(aa^{\ts})\nabla\log\frac{{p} }{\pi}\ra {p}  dx\nonumber \\
&-\int \la \gamma,\la \nabla \log\frac{{p} }{\pi}, \nabla(aa^{\ts})\nabla \log\frac{{p} }{\pi}\ra \ra {p}  dx+2\int \la aa^{\ts}\nabla \log\frac{{p} }{\pi}, \nabla \gamma \nabla \log\frac{{p} }{\pi}\ra {p}  dx\nonumber  \\	
& -2\int \widetilde\Gamma_2^{z,\pi}(\log\frac{{p} }{\pi},\log\frac{{p} }{\pi}){p}  dx
+\int \la \nabla\log\frac{{p} }{\pi},\pa_t(zz^{\ts})\nabla\log\frac{{p} }{\pi}\ra {p}  dx\nonumber \\
&-\int \la \gamma,\la \nabla \log\frac{{p} }{\pi}, \nabla(zz^{\ts})\nabla \log\frac{{p} }{\pi}\ra \ra {p}  dx+2\int \la zz^{\ts}\nabla \log\frac{{p} }{\pi}, \nabla \gamma \nabla \log\frac{{p} }{\pi}\ra {p}  dx \nonumber\\
& +\int\big[
2\nabla\cdot((aa^{\ts}+zz^{\ts})\nabla\mathcal{R})+2\la \nabla \mathcal R,( aa^{\ts}+zz^{\ts})\nabla\log\pi\ra \big]{p}  dx.
        \end{split}
    \end{equation}
    Applying the Information Bochner's formula in \cite[Theorem 3]{FengLi2021} or equivalently in \cite[Proposition 11]{FengLi2021}, for $\beta=0$, we have 
     \begin{equation}
    \begin{split}
             &-2\int\big[\widetilde \Gamma_2(\log\frac{{p} }{\pi},\log\frac{{p} }{\pi}) \big]{p}  dx-2\int \widetilde\Gamma_2^{z,\pi}(\log\frac{{p} }{\pi},\log\frac{{p} }{\pi}){p}  dx\\
              =&-2\int  \Big( \|\mathfrak{Hess}_{\beta} f\|_{\mathrm{F}}^2+ (\mathfrak R-\mathfrak R_{\gamma_a}-\mathfrak R_{\gamma_z} )(\nabla \log\frac{{p} }{\pi},\nabla \log\frac{{p} }{\pi})\Big){p}  dx,
              \end{split}
    \end{equation}
    where $\|\mathfrak{Hess}_{\beta} f\|_{\mathrm{F}}^2$ is the same as defined in \cite[Theorem 3]{FengLi2021} with diffusion matrix $a(t,x)$ and matrix $z(t,x)$. $\|\cdot\|_{\mathrm{F}}$ is the matrix Frobenius norm. The detailed matrices $\mathfrak R$, $\mathfrak R_{\gamma_a}$, $\mathfrak R_{\gamma_z}$ are all defined in the Appendix \ref{appendix}. Note that 
\begin{equation*}
\begin{split}
&-\int \la \gamma,\la \nabla \log\frac{{p} }{\pi}, \nabla(aa^{\ts}+zz^{\ts})\nabla \log\frac{{p} }{\pi}\ra \ra {p}  dx+2\int \la (aa^{\ts}+zz^{\ts})\nabla \log\frac{{p} }{\pi}, \nabla \gamma \nabla \log\frac{{p} }{\pi}\ra {p}  dx\\
&=-2\int  (\mathfrak R_{\gamma_a}+\mathfrak R_{\gamma_z})(\nabla \log\frac{{p} }{\pi},\nabla \log\frac{{p} }{\pi}){p}  dx.
\end{split}
\end{equation*}
Combining the above terms, we finish the proof, since $\|\mathfrak{Hess}_{\beta} f\|_{\mathrm{F}}^2\ge 0$.\qed
\end{proof}
\subsection{Dissipation of $\mathrm I_{a}({p} \|\pi)$} \label{sec fish a}
Now we are ready to present the following technical lemmas for first order dissipation of $\mathrm I_{a}({p} \|\pi)$.
\begin{proposition}\label{prop: dissipation of fisher a}
	\bea
\pa_t\mathrm I_{a}({p} \|\pi) &=& -2\int\big[\widetilde \Gamma_2(\log\frac{{p} }{\pi},\log\frac{{p} }{\pi}) \big]{p}  dx
+\int \la \nabla\log\frac{{p} }{\pi},\pa_t(aa^{\ts})\nabla\log\frac{{p} }{\pi}\ra {p}  dx\nonumber \\
&&-\int \la \gamma,\la \nabla \log\frac{{p} }{\pi}, \nabla(aa^{\ts})\nabla \log\frac{{p} }{\pi}\ra \ra {p}  dx+2\int \la aa^{\ts}\nabla \log\frac{{p} }{\pi}, \nabla \gamma \nabla \log\frac{{p} }{\pi}\ra {p}  dx\nonumber  \\
	&& +\int\big[
	2\nabla\cdot(aa^{\ts}\nabla\mathcal{R})+2\la \nabla \mathcal R, aa^{\ts}\nabla\log\pi\ra \big]{p}  dx.\nonumber
	\eea
\end{proposition}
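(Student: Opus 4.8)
The plan is to differentiate $\mathrm I_{a}(p\|\pi)=\int \Gamma_1(h,h)\,p\,dx$ in $t$, writing $h:=\log\frac{p}{\pi}$ and remembering that $\Gamma_1(h,h)=\langle \nabla h, aa^{\ts}\nabla h\rangle$ depends on $t$ both explicitly (through $a(t,x)$) and implicitly (through $h$). Using symmetry of $aa^{\ts}$,
\[
\pa_t\mathrm I_{a}(p\|\pi)=\underbrace{\int\langle \nabla h,\pa_t(aa^{\ts})\nabla h\rangle p\,dx}_{T_1}+\underbrace{2\int\Gamma_1(h,\pa_t h)\,p\,dx}_{T_2}+\underbrace{\int\Gamma_1(h,h)\,\pa_t p\,dx}_{T_3};
\]
$T_1$ is already the claimed $\pa_t(aa^{\ts})$-term, so all the work is in $T_2+T_3$.

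Next I would record the evolution equation for $h$. Dividing the decomposed Fokker--Planck equation \eqref{FPE1} by $p$ and using $\nabla\log p-\nabla\log\pi=\nabla h$ gives $\tfrac1p\nabla\cdot(p\,aa^{\ts}\nabla h)=\widetilde L h+\Gamma_1(h,h)$ and $\tfrac1p\nabla\cdot(p\gamma)=\nabla\cdot\gamma+\langle\nabla\log p,\gamma\rangle$, while the definition \eqref{correction R} of $\mathcal R$ rearranges to $\pa_t\log\pi=\mathcal R+\nabla\cdot\gamma+\langle\nabla\log\pi,\gamma\rangle$. Subtracting yields $\pa_t h=\widetilde L h+\Gamma_1(h,h)+\langle\gamma,\nabla h\rangle-\mathcal R$. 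I would substitute this into $T_2$, substitute \eqref{FPE1} into $T_3$ and integrate by parts once there; the result is a sum of four groups of terms: a $\Gamma_1(h,\widetilde L h)$ group, a $\Gamma_1(h,\Gamma_1(h,h))$ group, a $\gamma$-group, and an $\mathcal R$-group.

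The reversible part is assembled from the single identity $\int(\widetilde L g)\,p\,dx=-\int\Gamma_1(g,h)\,p\,dx$ (valid for any smooth $g$ by one integration by parts plus $\nabla\log p-\nabla\log\pi=\nabla h$): applied with $g=\Gamma_1(h,h)$ it converts the net $\Gamma_1(h,\Gamma_1(h,h))$ group into $-\int\widetilde L\Gamma_1(h,h)\,p\,dx$, which together with the $2\int\Gamma_1(\widetilde L h,h)\,p\,dx$ coming from $T_2$ is precisely $-2\int\widetilde\Gamma_2(h,h)\,p\,dx$ by the definition of $\widetilde\Gamma_2$. In the $\gamma$-group, expanding $\nabla\langle\gamma,\nabla h\rangle$ and $\nabla\Gamma_1(h,h)$ produces two Hessian-in-$h$ pieces, $\pm 2\int\langle aa^{\ts}\nabla h,(\nabla^2 h)\gamma\rangle p\,dx$, which cancel, leaving exactly $-\int\langle\gamma,\langle\nabla h,\nabla(aa^{\ts})\nabla h\rangle\rangle p\,dx+2\int\langle aa^{\ts}\nabla h,\nabla\gamma\,\nabla h\rangle p\,dx$. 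In the $\mathcal R$-group, $-2\int\langle aa^{\ts}\nabla h,\nabla\mathcal R\rangle p\,dx$ splits via $\nabla h=\nabla\log p-\nabla\log\pi$: the $\nabla\log p$ half integrates by parts into $2\int(\nabla\cdot(aa^{\ts}\nabla\mathcal R))\,p\,dx$ and the $\nabla\log\pi$ half is $2\int\langle\nabla\mathcal R,aa^{\ts}\nabla\log\pi\rangle p\,dx$, giving the last bracket of the statement.

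The main obstacle is bookkeeping rather than any single hard estimate: one has to keep track of several integrations by parts, remember that $\pi$ is only a reference (possibly non-invariant) density so the $\mathcal R$-terms genuinely survive, and notice the cancellation of the Hessian terms in the $\gamma$-group. Every integration by parts tacitly uses enough decay/regularity of $p,\pi,a,\gamma$ at infinity to kill boundary terms; this is covered by the smoothness and H\"ormander-type hypotheses of Section~\ref{sec2}, and I would invoke it rather than prove it.
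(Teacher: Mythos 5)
Your argument is correct. It reaches the stated identity, and the key cancellations (the Hessian terms $\pm 2\int\langle aa^{\ts}\nabla h,(\nabla^2 h)\gamma\rangle p\,dx$ in the $\gamma$-group, and the splitting of the $\mathcal R$-group via $\nabla h=\nabla\log p-\nabla\log\pi$) are exactly what must happen.

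The route is, however, organized differently from the paper's. The paper proves Proposition~\ref{prop: dissipation of fisher a} by first establishing Lemma~\ref{lemma a: 1}, which expresses $\pa_t\mathrm I_a$ through $\widetilde\Gamma_2$ plus the irreversible Gamma operator $\widetilde\Gamma_{\mathcal I_a}$ plus a stray term $2\int\Gamma_1(h,h)\frac{\nabla\cdot(\pi\gamma)}{\pi}p\,dx$, and then Lemma~\ref{lemma a: 2}, which unpacks $\int\widetilde\Gamma_{\mathcal I_a}(h,h)p\,dx$ and produces a matching $\int\frac{\nabla\cdot(\pi\gamma)}{\pi}\Gamma_1(h,h)p\,dx$ term that cancels the stray one. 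In that argument, the split $\nabla\cdot(p\gamma)=p\langle\nabla h,\gamma\rangle+p\frac{\nabla\cdot(\pi\gamma)}{\pi}$ (equation \eqref{trick: rho gamma}) is invoked explicitly and the $\frac{\nabla\cdot(\pi\gamma)}{\pi}$ piece is carried through two lemmas before it disappears. You instead derive the closed evolution equation $\pa_t h=\widetilde L h+\Gamma_1(h,h)+\langle\gamma,\nabla h\rangle-\mathcal R$ first; the subtraction $\pa_t\log p-\pa_t\log\pi$ kills the $\nabla\cdot\gamma$ (equivalently $\frac{\nabla\cdot(\pi\gamma)}{\pi}$) contribution at the outset, so the stray term never appears and $\widetilde\Gamma_{\mathcal I_a}$ is never needed. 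This is a cleaner bookkeeping of the same algebra: your ``$\gamma$-group'' cancellation of Hessian terms is the same cancellation that, in the paper, lives inside the proof of Lemma~\ref{lemma a: 2}. The trade-off is that the paper's phrasing in terms of $\widetilde\Gamma_{\mathcal I_a}$ aligns directly with the Bochner-type machinery of \cite{FengLi2021} that is reused later, whereas your shortcut is tailored to this one computation.
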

\begin{proof}
The proof of Proposition \ref{prop: dissipation of fisher a} follows from Lemma \ref{lemma a: 1} and Lemma \ref{lemma a: 2}. According to Lemma \ref{lemma a: 2}, we have 
\beaa
	\int\widetilde \Gamma_{\mathcal I_a}(f,f){p}  dx&=&\frac{1}{2}\int \la \gamma,\la \nabla f, \nabla(aa^{\ts})\nabla f\ra \ra {p}  dx-\int \la aa^{\ts}\nabla f, \nabla \gamma \nabla f\ra {p}  dx\\&& +\int \frac{\nabla\cdot (\pi\gamma)}{\pi} \Gamma_1(f,f){p}  dx.	
	\eeaa
	Plugging into Lemma \ref{lemma a: 1} with $f=\log\frac{{p} }{\pi}$, we prove the results. 
	\qed \end{proof}	
\begin{remark}
    The extra term in Lemma \ref{lemma a: 2} involving $\nabla\cdot(\pi\gamma)$ is canceled by the extra term in Lemma \ref{lemma a: 1}. This makes it possible to define the tensor $\mathfrak R_{\gamma_a}$ the same as in the time independent setting \cite{FengLi2021}.
\end{remark}
\begin{lemma}\label{lemma a: 1}
	\bea
\pa_t\mathrm I_{a}({p} \|\pi) &=& -2\int\big[\widetilde \Gamma_2(\log\frac{{p} }{\pi},\log\frac{{p} }{\pi})+\widetilde \Gamma_{\mathcal I_a}(\log\frac{{p} }{\pi},\log\frac{{p} }{\pi}) \big]{p}  dx\\
&&+\int \la \nabla\log\frac{{p} }{\pi},[\pa_t(aa^{\ts})+2aa^{\ts}\frac{\nabla\cdot(\pi\gamma)}{\pi}]\nabla\log\frac{{p} }{\pi}\ra {p}  dx\nonumber \\
	&& +\int\big[
	2\nabla\cdot(aa^{\ts}\nabla\mathcal{R})+2\la \nabla \mathcal R, aa^{\ts}\nabla\log\pi\ra \big]{p}  dx.\nonumber
	\eea
\end{lemma}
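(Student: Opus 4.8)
\emph{Proof plan.} The plan is to differentiate $\mathrm I_a(p\|\pi)=\int \Gamma_1(f,f)\,p\,dx$, where $f:=\log\frac{p}{\pi}$ and $\Gamma_1(f,f)=\langle\nabla f,aa^{\ts}\nabla f\rangle$, by distributing $\partial_t$ over the matrix $aa^{\ts}$, the function $f$, and the density $p$:
$\partial_t\mathrm I_a=\int\langle\nabla f,\partial_t(aa^{\ts})\nabla f\rangle p\,dx+2\int\Gamma_1(\partial_t f,f)\,p\,dx+\int\Gamma_1(f,f)\,\partial_t p\,dx$.
The first key step is a usable formula for $\partial_t f$. Dividing the Fokker--Planck equation \eqref{FPE1} by $p$ and using $\nabla\log p=\nabla f+\nabla\log\pi$ together with the definition of $\widetilde L$ in \eqref{operator L} gives $\partial_t\log p=\widetilde L f+\Gamma_1(f,f)+\frac{\nabla\cdot(\pi\gamma)}{\pi}+\langle\gamma,\nabla f\rangle$, while the definition \eqref{correction R} of $\mathcal R$ reads $\partial_t\log\pi=\mathcal R+\frac{\nabla\cdot(\pi\gamma)}{\pi}$; subtracting, the $\frac{\nabla\cdot(\pi\gamma)}{\pi}$ terms cancel and $\partial_t f=\widetilde L f+\langle\gamma,\nabla f\rangle+\Gamma_1(f,f)-\mathcal R$.

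Next I would substitute this into $2\int\Gamma_1(\partial_t f,f)p\,dx$, substitute \eqref{FPE1} into $\int\Gamma_1(f,f)\partial_t p\,dx$, and integrate by parts repeatedly on $\mathbb R^{n+m}$ (boundary terms vanish under the standing smoothness/decay assumptions). Two elementary identities organize the bookkeeping: since $\widetilde L g=\frac1\pi\nabla\cdot(\pi aa^{\ts}\nabla g)$ and $p\nabla f=\pi\nabla\frac p\pi$, one gets $\int(\widetilde L g)p\,dx=-\int\Gamma_1(g,f)p\,dx$ for any smooth $g$, and, inserting an extra factor $h$, $\int(\widetilde L f)h\,p\,dx=-\int\Gamma_1(f,h)p\,dx-\int h\,\Gamma_1(f,f)p\,dx$. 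The reversible contribution that emerges is $2\int\Gamma_1(\widetilde L f,f)p\,dx+\int\Gamma_1(\Gamma_1(f,f),f)p\,dx$, which by the first identity (with $g=\Gamma_1(f,f)$) and the definition of $\widetilde\Gamma_2$ in Definition \ref{defn:tilde gamma 2 znew}(i) equals $-2\int\widetilde\Gamma_2(f,f)p\,dx$; the $\mathcal R$-contribution is $-2\int\Gamma_1(\mathcal R,f)p\,dx$, which by the first identity (with $g=\mathcal R$) equals $\int[2\nabla\cdot(aa^{\ts}\nabla\mathcal R)+2\langle\nabla\mathcal R,aa^{\ts}\nabla\log\pi\rangle]p\,dx$, the correction term in the statement.

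The remaining $\gamma$-dependent terms are $2\int\Gamma_1(\langle\gamma,\nabla f\rangle,f)p\,dx-\int\langle\gamma,\nabla\Gamma_1(f,f)\rangle p\,dx$. I would show that adding $2\int\widetilde\Gamma_{\mathcal I_a}(f,f)p\,dx$ — with $\widetilde\Gamma_{\mathcal I_a}$ the $a$-part of the irreversible operator in Definition \ref{defn:tilde gamma 2 znew}(iii), i.e. $(\widetilde L f)\langle\nabla f,\gamma\rangle-\frac12\langle\nabla\Gamma_1(f,f),\gamma\rangle$ — turns these into $2\int\frac{\nabla\cdot(\pi\gamma)}{\pi}\Gamma_1(f,f)p\,dx$; this uses the second identity with $h=\langle\nabla f,\gamma\rangle$ to reduce $\int(\widetilde L f)\langle\nabla f,\gamma\rangle p\,dx$, together with the integration by parts $-\int\langle\gamma,\nabla\Gamma_1(f,f)\rangle p\,dx=\int\Gamma_1(f,f)[\frac{\nabla\cdot(\pi\gamma)}{\pi}+\langle\gamma,\nabla f\rangle]p\,dx$. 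Hence the $\gamma$-terms equal $-2\int\widetilde\Gamma_{\mathcal I_a}(f,f)p\,dx+2\int\frac{\nabla\cdot(\pi\gamma)}{\pi}\Gamma_1(f,f)p\,dx$, and since $2\frac{\nabla\cdot(\pi\gamma)}{\pi}\Gamma_1(f,f)=\langle\nabla f,2aa^{\ts}\frac{\nabla\cdot(\pi\gamma)}{\pi}\nabla f\rangle$, assembling the four contributions (matrix-derivative, reversible $\widetilde\Gamma_2$, the $\gamma$-terms, and the $\mathcal R$-term) with $f=\log\frac p\pi$ gives exactly the claimed identity.

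\emph{Main obstacle.} The integrations by parts are routine; the delicate point is the accounting of the scalar factor $\frac{\nabla\cdot(\pi\gamma)}{\pi}$, which cancels out of $\partial_t f$ but then re-surfaces from the $\gamma$-bookkeeping and survives as the extra term $2aa^{\ts}\frac{\nabla\cdot(\pi\gamma)}{\pi}$ inside the quadratic form. This is the genuinely new feature relative to the time-homogeneous theory of \cite{FengLi2021}, where $\nabla\cdot(\pi\gamma)=0$ and this term is absent; keeping it explicit here is precisely what makes it cancel against the matching term produced by Lemma \ref{lemma a: 2}, so that the tensor $\mathfrak R_{\gamma_a}$ in Proposition \ref{prop: dissipation of fisher a} agrees with the time-independent one.
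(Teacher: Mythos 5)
Your proof is correct, and it is essentially the same calculation as the paper's, but reorganized in a cleaner way. The paper keeps $\partial_t\log p$ and $\partial_t\log\pi$ as separate integrals (see the splitting right after the first display in its proof), combines $2\int\langle\nabla\partial_t\log p,aa^{\ts}\nabla f\rangle p\,dx$ with $\int\Gamma_1(f,f)\partial_t p\,dx$ via an explicit integration by parts to reach $-2\int\bigl[\tfrac12\Gamma_1(f,f)+\widetilde Lf\bigr]\partial_t p\,dx$, handles the $-2\int\langle\nabla\partial_t\log\pi,aa^{\ts}\nabla f\rangle p\,dx$ piece in a separate step, and only at the very end invokes $\partial_t\log\pi-\mathcal R=\frac{\nabla\cdot(\pi\gamma)}{\pi}$ to merge everything. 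You instead divide the Fokker--Planck equation by $p$ to obtain the pointwise identity $\partial_t f=\widetilde Lf+\Gamma_1(f,f)+\langle\gamma,\nabla f\rangle-\mathcal R$, in which the $\frac{\nabla\cdot(\pi\gamma)}{\pi}$ term cancels between $\partial_t\log p$ and $\partial_t\log\pi$, and then distribute this into $2\int\Gamma_1(\partial_t f,f)p\,dx$. The two routes use the same substitution for $\partial_t p$ and the same integrations by parts, and your reduction to $\widetilde\Gamma_2$, $\widetilde\Gamma_{\mathcal I_a}$, and the $\mathcal R$-correction is identical in substance to the paper's. What your version buys is that $\mathcal R$ enters as $-\mathcal R$ inside $\partial_t f$ from the start (rather than emerging only after the $\partial_t\log\pi$ bookkeeping), so the correction term is produced in a single step by the identity $-2\int\Gamma_1(\mathcal R,f)p\,dx=2\int\bigl[\nabla\cdot(aa^{\ts}\nabla\mathcal R)+\langle\nabla\mathcal R,aa^{\ts}\nabla\log\pi\rangle\bigr]p\,dx$, and the extra $2aa^{\ts}\frac{\nabla\cdot(\pi\gamma)}{\pi}$ quadratic-form term is visibly sourced from the single surviving $\nabla\cdot(p\gamma)$ contribution in $\int\Gamma_1(f,f)\partial_t p\,dx$. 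Your final remark on the role of $\frac{\nabla\cdot(\pi\gamma)}{\pi}$ and its cancellation against Lemma \ref{lemma a: 2} is exactly the point the paper makes in the remark following Proposition \ref{prop: dissipation of fisher a}.
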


\begin{proof}
	\beaa
	\pa_t \mathrm I_{a}({p} \|\pi)&=&2\int \la \nabla\pa_t \log\frac{{p} }{\pi},aa^{\ts}\nabla\log\frac{{p} }{\pi}\ra {p}  dx+\int \la \nabla\log\frac{{p} }{\pi},\pa_t(aa^{\ts})\nabla\log\frac{{p} }{\pi}\ra {p}  dx\\
	&&+\int \la \nabla\log\frac{{p} }{\pi},aa^{\ts}\nabla\log\frac{{p} }{\pi}\ra \pa_t {p}  dx\\
	&=&2\int \la \nabla\pa_t \log{p} ,aa^{\ts}\nabla\log\frac{{p} }{\pi}\ra {p}  dx+\int \la \nabla\log\frac{{p} }{\pi},aa^{\ts}\nabla\log\frac{{p} }{\pi}\ra \pa_t {p}  dx\\
	&&+\int \la \nabla\log\frac{{p} }{\pi},\pa_t(aa^{\ts})\nabla\log\frac{{p} }{\pi}\ra {p}  dx -2\int \la \nabla\pa_t \log\pi,aa^{\ts}\nabla\log\frac{{p} }{\pi}\ra {p}  dx.
	\eeaa
We first observe that, 
	\bea \label{term: gamma 2}
&&2\int \la \nabla\pa_t \log{p} ,aa^{\ts}\nabla\log\frac{{p} }{\pi}\ra {p}  dx+\int \la \nabla\log\frac{{p} }{\pi},aa^{\ts}\nabla\log\frac{{p} }{\pi}\ra \pa_t {p}  dx\nonumber \\
&=&\int \Gamma_1(\log\frac{{p}  }{\pi}, \log\frac{{p}  }{\pi})\partial_t {p}  -2\frac{\nabla\cdot({p}  aa^{\ts}\nabla\log\frac{{p}  }{\pi})}{{p}  }\partial_t {p}   dx \nonumber\\
&=&\int \Gamma_1(\log\frac{{p}  }{\pi}, \log\frac{{p}  }{\pi})\partial_t {p}  -2\Big(\la\nabla\log {p}  , aa^{\ts}\nabla\log\frac{{p}  }{\pi}\ra+\nabla\cdot(aa^{\ts}\nabla\log\frac{{p}  }{\pi})\Big)\partial_t {p}   dx \nonumber\\
&=&\int \Gamma_1(\log\frac{{p}  }{\pi}, \log\frac{{p}  }{\pi})\partial_t {p}  -2\Big(\la\nabla\log\frac{{p}  }{\pi}, aa^{\ts}\nabla\log\frac{{p}  }{\pi}\ra+\widetilde L\log\frac{{p}  }{\pi}\Big)\partial_t {p}   dx \nonumber\\
&=&-2\int \Big\{\frac{1}{2}\Gamma_1(\log\frac{{p}  }{\pi}, \log\frac{{p}  }{\pi})\partial_t {p}  +\widetilde L\log\frac{{p}  }{\pi}\partial_t {p}   \Big\}dx \nonumber\\
&=&-2\int \Big\{\quad\frac{1}{2}\Gamma_1(\log\frac{{p}  }{\pi}, \log\frac{{p}  }{\pi})\nabla\cdot({p}  \gamma)+\widetilde L\log\frac{{p}  }{\pi}\nabla\cdot({p}  \gamma)\nonumber\\
&&\hspace{1.5cm}+\frac{1}{2}\Gamma_1(\log\frac{{p}  }{\pi}, \log\frac{{p}  }{\pi})\widetilde L^*{p}  +\widetilde L\log\frac{{p}  }{\pi}\widetilde L^*{p}  \Big\} dx\nonumber \\
&=&-2\int \Big\{-\frac{1}{2}\la\nabla\Gamma_1(\log\frac{{p}  }{\pi}, \log\frac{{p}  }{\pi}), \gamma\ra+\widetilde L\log\frac{{p}  }{\pi}\la\nabla\log\frac{{p}  }{\pi}, \gamma\ra+\widetilde L\log\frac{{p}  }{\pi} \frac{\nabla\cdot(\pi\gamma)}{\pi} \nonumber\\
&&\hspace{1.5cm}+\frac{1}{2}\widetilde L\Gamma_1(\log\frac{{p}  }{\pi}, \log\frac{{p}  }{\pi})-\Gamma_1( \widetilde L\log\frac{{p}  }{\pi}, \log\frac{{p}  }{\pi})\Big\} {p}  dx \nonumber\\
&=& -2\int [\widetilde \Gamma_2(\log\frac{{p} }{\pi},\log\frac{{p} }{\pi})+\widetilde \Gamma_{\mathcal I_a}(\log\frac{{p} }{\pi},\log\frac{{p} }{\pi})]{p}  dx-2\int \tilde L \log\frac{{p} }{\pi}\frac{\nabla\cdot(\pi\gamma)}{\pi} {p}  dx.\nonumber
		\eea
		
In the second last equality, we apply the following fact $p\nabla\log p=\nabla p$, $\pi\nabla\log \pi=\nabla \pi$, such that 
\bea \label{trick: rho gamma}
\nabla\cdot({p}  \gamma)&=&{p}  \Big(\la\nabla\log {p}  , \gamma\ra+\nabla\cdot\gamma\Big)\nonumber \\
&=& {p} \Big(\la\nabla\log {p}  , \gamma\ra+\frac{\nabla\cdot(\pi\gamma)}{\pi}-\la\nabla\log\pi, \gamma\ra   \Big)\nonumber \\
&=&{p}  \la\nabla\log\frac{{p}  }{\pi}, \gamma\ra+ {p}  \frac{\nabla \cdot(\pi \gamma)}{\pi}.    
\eea 
We then have, 
	\beaa 
	\pa_t \mathrm I_{a}({p} \|\pi)&=& -2\int\Big[\widetilde \Gamma_2(\log\frac{{p} }{\pi},\log\frac{{p} }{\pi})+\widetilde \Gamma_{\mathcal{I}_a}(\log\frac{{p} }{\pi},\log\frac{{p} }{\pi}) \Big]{p}  dx-2\int \tilde L \log\frac{{p} }{\pi}\frac{\nabla\cdot(\pi\gamma)}{\pi} {p}  dx  \\
	&&+\int \la \nabla\log\frac{{p} }{\pi},\pa_t(aa^{\ts})\nabla\log\frac{{p} }{\pi}\ra {p}  dx -2\int \la \nabla\pa_t \log\pi,aa^{\ts}\nabla\log\frac{{p} }{\pi}\ra {p}  dx.
		\eeaa
Observing the following equality, we have 
\bea\label{term 1}
&&-2\int \la \nabla\pa_t \log\pi,aa^{\ts}\nabla\log\frac{{p} }{\pi}\ra {p}  dx\nonumber \\
&=& 2\int  \pa_t \log\pi \frac{\nabla\cdot(  {p}  aa^{\ts}\nabla\log\frac{{p} }{\pi})}{{p} } {p}  dx\nonumber\\
&=&2\int\Big[ \nabla\cdot (aa^{\ts}\nabla\log\frac{{p} }{\pi})+\la \nabla\log\frac{{p} }{\pi},aa^{\ts}\nabla\log\frac{{p} }{\pi}\ra+\la \nabla\log\pi,aa^{\ts}\nabla\log\frac{{p} }{\pi}\rangle\Big] \pa_t\log\pi {p}  dx\nonumber\\
&=&2\int \Big[ \tilde L  \log\frac{{p} }{\pi}+\la \nabla\log\frac{{p} }{\pi},aa^{\ts}\nabla\log\frac{{p} }{\pi}\ra \Big] \pa_t\log\pi {p}  dx,
\eea
which implies
\beaa
\pa_t\mathrm I_{a
}({p} \|\pi)	&=&  -2\int \Big[\widetilde{\Gamma}_2(\log\frac{{p} }{\pi},\log\frac{{p} }{\pi})+\widetilde \Gamma_{\mathcal I_a}(\log\frac{{p} }{\pi},\log\frac{{p} }{\pi})\Big] {p}  dx+\int \la \nabla\log\frac{{p} }{\pi},\pa_t(aa^{\ts})\nabla\log\frac{{p} }{\pi}\ra {p}  dx\\
	&&+2\int \tilde L \log\frac{{p} }{\pi}\frac{\pa_t\pi-\nabla\cdot(\pi\gamma)}{\pi} {p}  dx  +2\int \la \nabla\log\frac{{p} }{\pi},aa^{\ts}\nabla\log\frac{{p} }{\pi}\ra  \pa_t\log\pi {p}  dx.
\eeaa
We also have 
\beaa
&&2\int \tilde L \log\frac{{p} }{\pi}\mathcal{R} {p}  dx \\
&=&2\int \nabla\cdot(aa^{\ts}\nabla\log \frac{{p} }{\pi})\mathcal{R}{p}  dx + 2\int \la \nabla \log\pi,aa^{\ts}\nabla\log \frac{{p} }{\pi}\ra \mathcal{R} {p}  dx\\
&=& -2\int \la \nabla(\mathcal{R}{p} ),aa^{\ts}\nabla\log\frac{{p} }{\pi}\ra dx+ 2\int \la \nabla \log\pi,aa^{\ts}\nabla\log \frac{{p} }{\pi}\ra \mathcal{R} {p}  dx\\
&=& -2\int \la \nabla\mathcal{R},aa^{\ts}\nabla\log\frac{{p} }{\pi}\ra {p}  dx-2\int \la \nabla {p} ,aa^{\ts}\nabla\log\frac{{p} }{\pi}\ra\mathcal{R} dx+ 2\int \la \nabla \log\pi,aa^{\ts}\nabla\log \frac{{p} }{\pi}\mathcal \ra \mathcal{R} {p}  dx\\
&=& -2\int \la \nabla\mathcal{R},aa^{\ts}\nabla\log\frac{{p} }{\pi}\ra {p}  dx-2\int \la \nabla\log \frac{{p} }{\pi},aa^{\ts}\nabla\log\frac{{p} }{\pi}\ra\mathcal{R} {p}  dx.
\eeaa
Combining the above terms, we have 
\beaa
\pa_t\mathrm I_{a}({p} \|\pi) &=& -2\int\big[\widetilde{ \Gamma}_2(\log\frac{{p} }{\pi},\log\frac{{p} }{\pi})+\widetilde \Gamma_{\mathcal I_a}(\log\frac{{p} }{\pi},\log\frac{{p} }{\pi}) \big]{p}  dx\\
&&+\int \la \nabla\log\frac{{p} }{\pi},[\pa_t(aa^{\ts})+2aa^{\ts}\pa_t\log\pi]\nabla\log\frac{{p} }{\pi}\ra {p}  dx\\
	&&-2\int \la \nabla \mathcal{R}, aa^{\ts}\nabla\log\frac{{p} }{\pi}\ra{p}  dx	
	-2\int \la \nabla\log \frac{{p} }{\pi},aa^{\ts}\nabla\log\frac{{p} }{\pi}\ra\mathcal{R} {p}  dx.
\eeaa
Note that, 
\beaa
-\int \la \nabla \mathcal{R}, aa^{\ts}\nabla\log\frac{{p} }{\pi}\ra{p}  dx&=&-\int \la \nabla \mathcal{R}, aa^{\ts}\nabla{p} \ra dx+\int \la \nabla \mathcal{R}, aa^{\ts}\nabla\log\pi\ra{p}  dx\\
&=&\int \nabla\cdot(aa^{\ts}\nabla\mathcal{R}){p}  dx+\int \la \nabla \mathcal{R}, aa^{\ts}\nabla\log\pi\ra{p}  dx.
\eeaa
We conclude with 
\beaa
\pa_t\mathrm I_{a}({p} \|\pi) &=& -2\int\big[\widetilde{ \Gamma}_2(\log\frac{{p} }{\pi},\log\frac{{p} }{\pi})+\widetilde \Gamma_{\mathcal I_a}(\log\frac{{p} }{\pi},\log\frac{{p} }{\pi}) \big]{p}  dx\\
&&+\int \la \nabla\log\frac{{p} }{\pi},[\pa_t(aa^{\ts})+2aa^{\ts}\pa_t\log\pi-2aa^{\ts}\mathcal{R}]\nabla\log\frac{{p} }{\pi}\ra {p}  dx\\
	&& +\int\big[ 
	2\nabla\cdot(aa^{\ts}\nabla\mathcal{R})+2 \la \nabla \mathcal{R}, aa^{\ts}\nabla\log\pi\ra \big]{p}  dx.
\eeaa
And the results follow the fact $\pa_t\log \pi-\mathcal R=\frac{\nabla\cdot(\pi\gamma)}{\pi}$. \qed 
\end{proof}

Recall that the irreversible Gamma operator associated with $a$ is defined as 
\beaa
\widetilde\Gamma_{\mathcal I_a}(f,f)=\widetilde L f\la\nabla f, \gamma\ra-\frac{1}{2}\la \nabla\Gamma_1(f,f),\gamma\ra.
\eeaa
We next show the following equivalence identity in a weak form for the irreversible Gamma operator. 
\begin{lemma}\label{lemma a: 2}
Denote $f=\log\frac{{p} }{\pi}$, we have 
	\beaa
	\int\widetilde \Gamma_{\mathcal I_a}(f,f){p}  dx&=&\frac{1}{2}\int \la \gamma,\la \nabla f, \nabla(aa^{\ts})\nabla f\ra \ra {p}  dx-\int \la aa^{\ts}\nabla f, \nabla \gamma \nabla f\ra {p}  dx\\
 &&+\int \frac{\nabla\cdot (\pi\gamma)}{\pi} \Gamma_1(f,f){p}  dx.	
	\eeaa
\end{lemma}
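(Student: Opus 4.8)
The plan is to establish the identity by expanding $\widetilde\Gamma_{\mathcal I_a}(f,f)=\widetilde L f\,\la\nabla f,\gamma\ra-\tfrac12\la\nabla\Gamma_1(f,f),\gamma\ra$ and integrating each summand against $p\,dx$. The starting point is the elementary identity, valid for $f=\log\frac{p}{\pi}$,
\[
p\,\widetilde L f=\nabla\cdot\big(p\,aa^{\ts}\nabla f\big)-p\,\Gamma_1(f,f),
\]
which follows from the definition of $\widetilde L$ together with $\nabla p=p\nabla\log p$ and $\nabla\pi=\pi\nabla\log\pi$, exactly the type of manipulation used in \eqref{trick: rho gamma}. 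Substituting this and integrating by parts against $\nabla\cdot(p\,aa^{\ts}\nabla f)$ gives
\[
\int\widetilde L f\,\la\nabla f,\gamma\ra\,p\,dx=-\int\la\nabla\la\nabla f,\gamma\ra,\,aa^{\ts}\nabla f\ra\,p\,dx-\int\la\nabla f,\gamma\ra\,\Gamma_1(f,f)\,p\,dx .
\]

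The key algebraic step is to treat the first integral on the right. Expanding $\nabla\la\nabla f,\gamma\ra$ by the product rule, $\partial_j\la\nabla f,\gamma\ra=\sum_k\gamma_k\,\partial_j\partial_k f+\sum_k(\partial_k f)(\partial_j\gamma_k)$, separates it into a Hessian piece $\la(\nabla^2 f)\gamma,aa^{\ts}\nabla f\ra$ and a Jacobian piece equal to $\la aa^{\ts}\nabla f,\nabla\gamma\nabla f\ra$. For the Hessian piece, differentiating $\Gamma_1(f,f)=\la\nabla f,aa^{\ts}\nabla f\ra$ and using the symmetry of $aa^{\ts}$ gives $\partial_k\Gamma_1(f,f)=2\sum_{j,l}(\partial_k\partial_j f)(aa^{\ts})_{jl}(\partial_l f)+\la\nabla f,\partial_k(aa^{\ts})\nabla f\ra$; contracting against $\gamma$ yields
\[
\la(\nabla^2 f)\gamma,aa^{\ts}\nabla f\ra=\tfrac12\la\gamma,\nabla\Gamma_1(f,f)\ra-\tfrac12\la\gamma,\la\nabla f,\nabla(aa^{\ts})\nabla f\ra\ra ,
\]
where the last bracket is precisely the vector $\Gamma_{1,\nabla(aa^{\ts})}(f,f)$ of Definition \ref{defn:tilde gamma 2 znew}. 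Integrating by parts once more, $-\tfrac12\int\la\gamma,\nabla\Gamma_1(f,f)\ra p\,dx=\tfrac12\int\Gamma_1(f,f)\,\nabla\cdot(p\gamma)\,dx=\tfrac12\int\Gamma_1(f,f)\big(\la\nabla f,\gamma\ra+\tfrac{\nabla\cdot(\pi\gamma)}{\pi}\big)p\,dx$ by \eqref{trick: rho gamma}; the remaining summand $-\tfrac12\int\la\nabla\Gamma_1(f,f),\gamma\ra p\,dx$ of $\widetilde\Gamma_{\mathcal I_a}$ produces an identical contribution.

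Collecting all the pieces, the two terms of the form $\int\la\nabla f,\gamma\ra\Gamma_1(f,f)p\,dx$ appear with opposite signs and cancel, the two terms of the form $\tfrac12\int\tfrac{\nabla\cdot(\pi\gamma)}{\pi}\Gamma_1(f,f)p\,dx$ add up to the full $\int\tfrac{\nabla\cdot(\pi\gamma)}{\pi}\Gamma_1(f,f)p\,dx$, and what survives is exactly $\tfrac12\int\la\gamma,\la\nabla f,\nabla(aa^{\ts})\nabla f\ra\ra p\,dx-\int\la aa^{\ts}\nabla f,\nabla\gamma\nabla f\ra p\,dx$, giving the claimed formula. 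I expect the only genuine difficulty to be bookkeeping: keeping the index conventions consistent in the Hessian/$\nabla\Gamma_1$ identity (especially the coefficient $\tfrac12$ on the $\nabla(aa^{\ts})$ correction and the sign of the $\nabla\gamma$ term) and making sure each integration by parts is performed with $p\,dx$ as the weight, so that the $\tfrac{\nabla\cdot(\pi\gamma)}{\pi}$ contributions — the feature new relative to the time-homogeneous computation in \cite{FengLi2021}, where this term is absent — are accounted for correctly.
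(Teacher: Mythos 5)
Your proof is correct and takes essentially the same route as the paper: both arguments are a direct computation built on integrating by parts against $\nabla\cdot(p\,aa^{\ts}\nabla f)$, the identity $\tfrac{\nabla\cdot(p\gamma)}{p}=\la\nabla f,\gamma\ra+\tfrac{\nabla\cdot(\pi\gamma)}{\pi}$ from \eqref{trick: rho gamma}, and the $\nabla\Gamma_1(f,f)$/Hessian relation $\la(\nabla^2 f)\gamma,aa^{\ts}\nabla f\ra=\tfrac12\la\gamma,\nabla\Gamma_1(f,f)\ra-\tfrac12\la\gamma,\la\nabla f,\nabla(aa^{\ts})\nabla f\ra\ra$. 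Your organization is in fact slightly cleaner in that you encapsulate the first integration by parts into the identity $p\,\widetilde Lf=\nabla\cdot(p\,aa^{\ts}\nabla f)-p\,\Gamma_1(f,f)$ before expanding, whereas the paper arrives at $-\Gamma_1(f,f)\la\nabla f,\gamma\ra$ only after an intermediate recombination of $-\la aa^{\ts}\nabla f,\nabla\log p\ra$ with $\la aa^{\ts}\nabla\log\pi,\nabla f\ra$; either way the cancellations and the final expression are identical.
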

\begin{proof}
We first observe that, 
\beaa
\frac{\nabla\cdot({p}  \gamma)}{{p} }=\la \nabla \log{p} ,\gamma\ra +\nabla\cdot\gamma =\la \nabla \log\frac{{p} }{\pi},\gamma\ra+\la \nabla \log \pi,\gamma\ra +\nabla\cdot\gamma =\la \nabla f,\gamma\ra +\frac{\nabla\cdot (\pi\gamma)}{\pi}.
\eeaa
According to the definition of the information Gamma operator, we have 
	\beaa
	&&\int \widetilde\Gamma_{\mathcal I_a}(f,f){p}  dx\\
	&=& \int [\widetilde L f\la \nabla f, \gamma\ra -\frac{1}{2}\la \nabla\Gamma_1(f,f),\gamma\ra ]{p}  dx\\
	&=&\int \Big[\nabla\cdot(aa^{\ts}\nabla f)\la \nabla f, \gamma\ra+\la aa^{\ts}\nabla\log\pi,\nabla f\ra \la \nabla f, \gamma\ra\Big]{p}  dx+\frac{1}{2}\int\nabla\cdot ({p} \gamma)\Gamma_1(f,f)dx\\
	&=&\int \Big[\nabla\cdot(aa^{\ts}\nabla f)\la \nabla f, \gamma\ra+\la aa^{\ts}\nabla\log\pi,\nabla f\ra \la \nabla f, \gamma\ra\Big]{p}  dx\\
	&&+\frac{1}{2}\int [\la \nabla f,\gamma\ra +\frac{\nabla\cdot (\pi\gamma)}{\pi} ]\Gamma_1(f,f){p}  dx
 \eeaa 
 \beaa 
	&=&\int \Big[-\la aa^{\ts}\nabla f,\nabla \log {p} \ra  \la \nabla f, \gamma\ra+\la aa^{\ts}\nabla\log\pi,\nabla f\ra \la \nabla f, \gamma\ra\Big]{p}  dx+\frac{1}{2}\int \la \nabla f,\gamma\ra\Gamma_1(f,f){p}  dx\\
&&-\int [\la aa^{\ts}\nabla f, \nabla^2 f\gamma\ra-\la aa^{\ts}\nabla f,\nabla\gamma\nabla f\ra]{p}  dx+\frac{1}{2}\int \frac{\nabla\cdot (\pi\gamma)}{\pi} \Gamma_1(f,f){p}  dx\\
&=&-\frac{1}{2}\int \la \nabla f,\gamma\ra\Gamma_1(f,f){p}  dx-\int [\la aa^{\ts}\nabla f, \nabla^2 f\gamma\ra-\la aa^{\ts}\nabla f,\nabla\gamma\nabla f\ra]{p}  dx+\frac{1}{2}\int \frac{\nabla\cdot (\pi\gamma)}{\pi} \Gamma_1(f,f){p}  dx\\
&=&-\frac{1}{2}\int \la \nabla  {p} ,\gamma\ra\Gamma_1(f,f) dx+\frac{1}{2}\int \la \nabla \log \pi,\gamma\ra\Gamma_1(f,f){p}  dx\\
&&-\int [\la aa^{\ts}\nabla f, \nabla^2 f\gamma\ra-\la aa^{\ts}\nabla f,\nabla\gamma\nabla f\ra]{p}  dx+\frac{1}{2}\int \frac{\nabla\cdot (\pi\gamma)}{\pi} \Gamma_1(f,f){p}  dx\\
&=&\frac{1}{2}\int \la \gamma,\la \nabla f, \nabla(aa^{\ts})\nabla f\ra \ra {p}  dx-\int \la aa^{\ts}\nabla f, \nabla \gamma \nabla f\ra {p}  dx +\int \frac{\nabla\cdot (\pi\gamma)}{\pi} \Gamma_1(f,f){p}  dx.
	\eeaa
	The last equality follows from the fact that 
	\beaa
&&-\frac{1}{2}\int \la \nabla  {p} ,\gamma\ra\Gamma_1(f,f) dx
=	\frac{1}{2}\int \nabla\cdot (\gamma \Gamma_1(f,f)){p}  dx\\
&=&\frac{1}{2}\int\nabla\cdot \gamma \Gamma_1(f,f){p}  dx +\int \la aa^{\ts}\nabla f, \nabla^2 f\gamma\ra {p}  dx+\frac{1}{2}\int \la \gamma,\la \nabla f, \nabla(aa^{\ts})\nabla f\ra \ra {p}  dx,
	\eeaa
	and 
	\beaa
\frac{\nabla\cdot(\pi \gamma)}{\pi}	=\la \nabla \log\pi,\gamma\ra +\nabla\cdot \gamma.
	\eeaa
	\qed
\end{proof}
\subsection{Dissipation of auxillary Fisher information}\label{sec fish z}
Similar to the first-order dissipation of the Fisher information functional, we have the following decay for the auxiliary Fisher information functional. 
\begin{proposition}
	\label{prop: dissipation of fisher z}
	\bea
\pa_t\mathrm I_{z}({p} \|\pi) &=& -2\int \widetilde\Gamma_2^{z,\pi}(\log\frac{{p} }{\pi},\log\frac{{p} }{\pi}){p}  dx
+\int \la \nabla\log\frac{{p} }{\pi},\pa_t(zz^{\ts})\nabla\log\frac{{p} }{\pi}\ra {p}  dx\nonumber \\
&&-\int \la \gamma,\la \nabla \log\frac{{p} }{\pi}, \nabla(zz^{\ts})\nabla \log\frac{{p} }{\pi}\ra \ra {p}  dx+2\int \la zz^{\ts}\nabla \log\frac{{p} }{\pi}, \nabla \gamma \nabla \log\frac{{p} }{\pi}\ra {p}  dx \nonumber\\
	&& +\int\big[
	2\nabla\cdot(zz^{\ts}\nabla\mathcal{R})+2 \la \nabla \mathcal R, zz^{\ts}\nabla\log\pi\ra \big]{p}  dx.
	\eea
\end{proposition}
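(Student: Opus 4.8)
The plan is to mirror the proof of Proposition~\ref{prop: dissipation of fisher a}: I would establish the exact $z$-analogues of Lemma~\ref{lemma a: 1} and Lemma~\ref{lemma a: 2} and then add them. First I would differentiate $\mathrm I_z(p\|\pi)=\int\la\nabla\log\frac p\pi, zz^\ts\nabla\log\frac p\pi\ra p\,dx$ in $t$, which yields three groups of terms: the clean term $\int\la\nabla\log\frac p\pi,\pa_t(zz^\ts)\nabla\log\frac p\pi\ra p\,dx$; the term $2\int\la\nabla\pa_t\log\frac p\pi, zz^\ts\nabla\log\frac p\pi\ra p\,dx$; and $\int\la\nabla\log\frac p\pi, zz^\ts\nabla\log\frac p\pi\ra\pa_t p\,dx$. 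Writing $u=\log\frac p\pi$, splitting $\pa_t u=\pa_t\log p-\pa_t\log\pi$ and integrating by parts in $x$ exactly as in the chain of identities proving Lemma~\ref{lemma a: 1} (using $\nabla\cdot(p\,zz^\ts\nabla f)=p[\widetilde L_z f+\la zz^\ts\nabla u,\nabla f\ra]$), the $\pa_t\log p$ contributions combine with the $\pa_t p$ term into $-2\int[\widetilde L_z u+\tfrac12\Gamma_1^z(u,u)]\,\pa_t p\,dx$. I would then substitute the decomposed Fokker--Planck equation~\eqref{FPE1}, $\pa_t p=\nabla\cdot(p\,aa^\ts\nabla u)+\nabla\cdot(p\gamma)$, and treat the two pieces separately.

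For the $aa^\ts$-piece, $-2\int[\widetilde L_z u+\tfrac12\Gamma_1^z(u,u)]\,\nabla\cdot(p\,aa^\ts\nabla u)\,dx$, I would integrate by parts once more; using that $\widetilde L$ is self-adjoint with respect to $\pi\,dx$, this is precisely the Information Bochner computation of \cite[Theorem~1]{FengLi2021} (equivalently \cite[Proposition~11]{FengLi2021} with $\beta=0$) carried out with the two matrices $a(t,x)$ and $z(t,x)$, and it equals $-2\int\widetilde\Gamma_2^{z,\pi}(u,u)\,p\,dx$. For the $\nabla\cdot(p\gamma)$-piece, expanding via $\nabla\cdot(p\gamma)=p\la\nabla u,\gamma\ra+p\frac{\nabla\cdot(\pi\gamma)}{\pi}$ from~\eqref{trick: rho gamma} and integrating by parts gives $-2\int\widetilde\Gamma_{\mathcal I_z}(u,u)\,p\,dx-2\int\widetilde L_z u\,\frac{\nabla\cdot(\pi\gamma)}{\pi}\,p\,dx$, where $\widetilde\Gamma_{\mathcal I_z}(f,f)=\widetilde L_z f\la\nabla f,\gamma\ra-\tfrac12\la\nabla\Gamma_1^z(f,f),\gamma\ra$ is the $z$-analogue of the irreversible Gamma operator $\widetilde\Gamma_{\mathcal I_a}$ appearing in Lemma~\ref{lemma a: 2}. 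The remaining $-2\int\la\nabla\pa_t\log\pi,zz^\ts\nabla u\ra p\,dx$ term I would handle by integration by parts exactly as in the derivation of~\eqref{term 1} with $aa^\ts$ replaced by $zz^\ts$, obtaining $2\int[\widetilde L_z u+\Gamma_1^z(u,u)]\pa_t\log\pi\,p\,dx$.

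Collecting all the $\widetilde L_z u$-times-scalar terms, they combine into $2\int\widetilde L_z u\,\mathcal R\,p\,dx$ with $\mathcal R$ as in~\eqref{correction R}; integrating this by parts (as in the proof of Lemma~\ref{lemma a: 1}) converts it to $\int[2\nabla\cdot(zz^\ts\nabla\mathcal R)+2\la\nabla\mathcal R,zz^\ts\nabla\log\pi\ra]p\,dx$ plus a leftover $-2\int\Gamma_1^z(u,u)\mathcal R\,p\,dx$ which cancels against part of the $\pa_t\log\pi$ contribution, leaving $2\int\frac{\nabla\cdot(\pi\gamma)}{\pi}\Gamma_1^z(u,u)p\,dx$. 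Assembling everything gives the $z$-analogue of Lemma~\ref{lemma a: 1}, obtained from that lemma by replacing $aa^\ts$ with $zz^\ts$, $\widetilde\Gamma_2$ with $\widetilde\Gamma_2^{z,\pi}$, and $\widetilde\Gamma_{\mathcal I_a}$ with $\widetilde\Gamma_{\mathcal I_z}$. Next I would prove the corresponding $z$-version of Lemma~\ref{lemma a: 2} --- obtained from Lemma~\ref{lemma a: 2} by the substitution $aa^\ts\to zz^\ts$, $\Gamma_1\to\Gamma_1^z$ --- which goes through line for line, since the proof of Lemma~\ref{lemma a: 2} uses only integration by parts, the product rule and $\nabla g=g\nabla\log g$, never the self-adjointness of $\widetilde L$. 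Substituting it into the $z$-analogue of Lemma~\ref{lemma a: 1}, the two $\frac{\nabla\cdot(\pi\gamma)}{\pi}\Gamma_1^z(u,u)$ terms cancel (as in the remark following Proposition~\ref{prop: dissipation of fisher a}), and the claimed identity for $\pa_t\mathrm I_z(p\|\pi)$ drops out.

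I expect the main obstacle to be the $aa^\ts$-piece in the second paragraph: the integration by parts there is \emph{mismatched}, since $\widetilde L$ is built from $aa^\ts$ while the quadratic form $\Gamma_1^z$ carried along is built from $zz^\ts$, and one must verify that moving the stray derivative (the one that lands on $aa^\ts$ or $zz^\ts$ rather than on $u$) produces exactly the two divergence correction terms $\div^\pi_z(\Gamma_{1,\nabla(aa^\ts)})-\div^\pi_a(\Gamma_{1,\nabla(zz^\ts)})$ in the definition of $\widetilde\Gamma_2^{z,\pi}$, with the correct signs and $\pi$-weights. A secondary point to check is that the time derivative genuinely decouples from the spatial manipulations, so that $\pa_t$ contributes nothing beyond the explicit $\pa_t(zz^\ts)$ term isolated at the start.
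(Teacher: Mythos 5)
Your proposal is correct and follows essentially the same route as the paper: the paper deduces Proposition~\ref{prop: dissipation of fisher z} from Lemmas~\ref{lemma z: 1} and~\ref{lemma z: 2}, which are precisely the $z$-analogues of Lemmas~\ref{lemma a: 1} and~\ref{lemma a: 2} you construct, with the same FPE split into $\widetilde L^*p$ and $\nabla\cdot(p\gamma)$ pieces, the same use of \eqref{trick: rho gamma}, and the same cancellation of the $\frac{\nabla\cdot(\pi\gamma)}{\pi}\Gamma_1^z$ term. The mismatched-integration-by-parts step you flag as the main obstacle (turning $-2\int[\tfrac12\Gamma_1^z(u,u)+\widetilde L_z u]\widetilde L^*p\,dx$ into $-2\int\widetilde\Gamma_2^{z,\pi}(u,u)p\,dx$ including the divergence corrections) is indeed exactly where the paper, too, delegates to an externally-proved cross Bochner identity, citing \cite[Proposition~5.11]{FengLi} and \cite[Proposition~8]{FengLi2021} rather than the result numbers you quote, but the content is the same.
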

The proof follows from the following Lemma \ref{lemma z: 1}
 and Lemma \ref{lemma z: 2}. 
 \begin{lemma}\label{lemma z: 1}
	\bea
\pa_t\mathrm I_{z}({p} \|\pi) &=& -2\int\big[\widetilde \Gamma^{z, \pi}_2(\log\frac{{p} }{\pi},\log\frac{{p} }{\pi})+\Gamma_{\mathcal I_z}(\log\frac{{p} }{\pi},\log\frac{{p} }{\pi}) \big]{p}  dx\\
&&+\int \la \nabla\log\frac{{p} }{\pi},[\pa_t(zz^{\ts})+2zz^{\ts}\frac{\nabla\cdot(\pi\gamma)}{\pi}]\nabla\log\frac{{p} }{\pi}\ra {p}  dx\nonumber \\
	&& +\int\big[
	2\nabla\cdot(zz^{\ts}\nabla\mathcal{R})+2 \la \nabla \mathcal R, zz^{\ts}\nabla\log\pi\ra \big]{p}  dx.\nonumber
	\eea
\end{lemma}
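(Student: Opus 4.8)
The plan is to carry out the computation of Lemma~\ref{lemma a: 1} line by line, with the reversible matrix $aa^{\ts}$ inside the Fisher functional replaced by the auxiliary matrix $zz^{\ts}$, while leaving the Fokker--Planck equation \eqref{FPE1} — hence the ``true'' reversible part $aa^{\ts}$ and the non-gradient field $\gamma$ — untouched. Writing $f=\log\frac{p}{\pi}$ and differentiating $\mathrm I_z(p\|\pi)=\int\la\nabla f,zz^{\ts}\nabla f\ra p\,dx$ in time produces the term $2\int\la\nabla\pa_t\log p,zz^{\ts}\nabla f\ra p\,dx$, the matrix term $\int\la\nabla f,\pa_t(zz^{\ts})\nabla f\ra p\,dx$, the term $\int\Gamma_1^z(f,f)\pa_t p\,dx$, and the term $-2\int\la\nabla\pa_t\log\pi,zz^{\ts}\nabla f\ra p\,dx$ coming from $\pa_t f=\pa_t\log p-\pa_t\log\pi$.

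I then process these pieces exactly as in the proof of Lemma~\ref{lemma a: 1}. Combining the $\pa_t\log p$ term with $\int\Gamma_1^z(f,f)\pa_t p\,dx$ and integrating by parts, using $\tfrac1p\nabla\cdot(p\,zz^{\ts}\nabla f)=\Gamma_1^z(f,f)+\widetilde L_z f$, gives $-2\int\{\tfrac12\Gamma_1^z(f,f)+\widetilde L_z f\}\pa_t p\,dx$, the analogue of \eqref{term: gamma 2}. Substituting $\pa_t p=\nabla\cdot(p\gamma)+\nabla\cdot(p\,aa^{\ts}\nabla f)$: the $\nabla\cdot(p\gamma)$ part, via the identity \eqref{trick: rho gamma} $\tfrac1p\nabla\cdot(p\gamma)=\la\nabla f,\gamma\ra+\tfrac{\nabla\cdot(\pi\gamma)}{\pi}$, splits off the irreversible Gamma term $\Gamma_{\mathcal I_z}(f,f)=\widetilde L_z f\la\nabla f,\gamma\ra-\tfrac12\la\nabla\Gamma_1^z(f,f),\gamma\ra$ plus a remainder $-2\int\widetilde L_z f\tfrac{\nabla\cdot(\pi\gamma)}{\pi}p\,dx$; the $\nabla\cdot(p\,aa^{\ts}\nabla f)$ part, after integration by parts, yields $\tfrac12\widetilde L\Gamma_1^z(f,f)-\Gamma_1(\widetilde L_z f,f)$. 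For the $\pa_t\log\pi$ term, integrating by parts as in \eqref{term 1} gives $2\int(\widetilde L_z f+\Gamma_1^z(f,f))\pa_t\log\pi\,p\,dx$; the $\widetilde L_z f$-part merges with the remainder into $2\int\widetilde L_z f\,\mathcal R\,p\,dx$ since $\pa_t\log\pi-\tfrac{\nabla\cdot(\pi\gamma)}{\pi}=\mathcal R$ by \eqref{correction R}, and two further integrations by parts on this term produce the terms $2\nabla\cdot(zz^{\ts}\nabla\mathcal R)+2\la\nabla\mathcal R,zz^{\ts}\nabla\log\pi\ra$ together with a leftover $-2\int\Gamma_1^z(f,f)\mathcal R\,p\,dx$; the surviving $\Gamma_1^z(f,f)$ terms assemble into the $2zz^{\ts}\tfrac{\nabla\cdot(\pi\gamma)}{\pi}$ contribution to the quadratic form, again using $\pa_t\log\pi-\mathcal R=\tfrac{\nabla\cdot(\pi\gamma)}{\pi}$. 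It then remains only to replace $\tfrac12\widetilde L\Gamma_1^z(f,f)-\Gamma_1(\widetilde L_z f,f)$ by $\widetilde\Gamma_2^{z,\pi}(f,f)$ in the weak sense, and to collect all terms.

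That last replacement is the only step with new content, and it is the main obstacle. In the $a$-case, $\tfrac12\widetilde L\Gamma_1(f,f)-\Gamma_1(\widetilde L f,f)$ equals $\widetilde\Gamma_2(f,f)$ by definition; here the diffusion driving the dynamics ($aa^{\ts}$) and the matrix in the Fisher functional ($zz^{\ts}$) do not coincide, so one must recover the additional divergence terms $\div^{\pi}_z(\Gamma_{1,\nabla(aa^{\ts})}(f,f))-\div^{\pi}_a(\Gamma_{1,\nabla(zz^{\ts})}(f,f))$ of Definition~\ref{defn:tilde gamma 2 znew}. These arise precisely when the order of $\Gamma_1(\widetilde L_z f,f)$ (and of $\Gamma_1^z(\widetilde L f,f)$) is lowered by integration by parts — derivatives falling on $aa^{\ts}$ and on $zz^{\ts}$ — and the resulting identity closes by virtue of the compatibility condition \eqref{condition: bochner}; this is exactly the weak form of the Bochner-type identity of \cite{FengLi2021}, which, as noted in the remark following Definition~\ref{defn:tilde gamma 2 znew}, is insensitive to the presence of the time-dependent field $\gamma$. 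Beyond that single identity the argument is pure bookkeeping, and, as in Lemma~\ref{lemma a: 1}, it is cleanest to introduce $\mathcal R$ only at the very end so that the $\pa_t\log\pi$, $\mathcal R$, and $\tfrac{\nabla\cdot(\pi\gamma)}{\pi}$ corrections recombine into exactly the three terms of the statement.
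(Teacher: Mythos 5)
Your proposal reproduces the paper's own argument step by step: differentiate $\mathrm I_z$, combine the $\pa_t\log p$ term with $\int\Gamma_1^z(f,f)\pa_t p\,dx$ via $\tfrac1p\nabla\cdot(p\,zz^{\ts}\nabla f)=\Gamma_1^z(f,f)+\widetilde L_z f$, split $\pa_t p$ using the Fokker--Planck form \eqref{FPE1}, use \eqref{trick: rho gamma} to peel off $\Gamma_{\mathcal I_z}$ and the $\nabla\cdot(\pi\gamma)/\pi$ remainder, invoke the Bochner-type weak identity from \cite{FengLi,FengLi2021} (the paper's \eqref{identity}) to convert the mixed reversible piece into $\widetilde\Gamma_2^{z,\pi}$, and finally recombine $\pa_t\log\pi$, $\mathcal R$, and $\nabla\cdot(\pi\gamma)/\pi$ exactly as the paper does. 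The only difference is cosmetic: you write the reversible piece after integration by parts as $\tfrac12\widetilde L\Gamma_1^z(f,f)-\Gamma_1(\widetilde L_z f,f)$, which is what the computation actually yields (the paper's displayed left-hand side of \eqref{identity} appears to have $\widetilde L_z\Gamma_1$ and is most naturally read as a typo), so your version is the correct one and does not change the argument.
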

\begin{proof}
	\beaa
	\pa_t \mathrm I_{z}({p} \|\pi)&=&2\int \la \nabla\pa_t \log\frac{{p} }{\pi},zz^{\ts}\nabla\log\frac{{p} }{\pi}\ra {p}  dx+\int \la \nabla\log\frac{{p} }{\pi},\pa_t(zz^{\ts})\nabla\log\frac{{p} }{\pi}\ra {p}  dx\\
	&&+\int \la \nabla\log\frac{{p} }{\pi},zz^{\ts}\nabla\log\frac{{p} }{\pi}\ra \pa_t {p}  dx\\
	&=&2\int \la \nabla\pa_t \log{p} ,zz^{\ts}\nabla\log\frac{{p} }{\pi}\ra {p}  dx+\int \la \nabla\log\frac{{p} }{\pi},zz^{\ts}\nabla\log\frac{{p} }{\pi}\ra \pa_t {p}  dx\\
	&&+\int \la \nabla\log\frac{{p} }{\pi},\pa_t(zz^{\ts})\nabla\log\frac{{p} }{\pi}\ra {p}  dx -2\int \la \nabla\pa_t \log\pi,zz^{\ts}\nabla\log\frac{{p} }{\pi}\ra {p}  dx.
	\eeaa
Similar to the derivation for $\mathrm I_a({p} \|\pi)$, we first observe the following the fact,
\beaa 
&=&\int \Gamma_1^z(\log\frac{{p}  }{\pi}, \log\frac{{p}  }{\pi})\partial_t {p}  +2\Gamma_1^z(\frac{\partial_t {p}  }{{p}  }, \log\frac{{p}  }{\pi}){p}   dx \\
&=&\int \Gamma_1^z(\log\frac{{p}  }{\pi}, \log\frac{{p}  }{\pi})\partial_t {p}  -2\frac{\nabla\cdot({p}  zz^{\ts}\nabla\log\frac{{p}  }{\pi})}{{p}  }\partial_t {p}   dx \\
&=&\int \Gamma_1^z(\log\frac{{p}  }{\pi}, \log\frac{{p}  }{\pi})\partial_t {p}  -2\Big(\la\nabla\log {p}  , zz^{\ts}\nabla\log\frac{{p}  }{\pi}\ra+\nabla\cdot(zz^{\ts}\nabla\log\frac{{p}  }{\pi})\Big)\partial_t {p}   dx\\
&=&\int \Gamma_1^z(\log\frac{{p}  }{\pi}, \log\frac{{p}  }{\pi})\partial_t {p}  -2\Big(\la\nabla\log\frac{{p}  }{\pi}, zz^{\ts}\nabla\log\frac{{p}  }{\pi}\ra+\widetilde L_z\log\frac{{p}  }{\pi}\Big)\partial_t {p}   dx \\
&=&-2\int \Big\{\frac{1}{2}\Gamma_1^z(\log\frac{{p}  }{\pi}, \log\frac{{p}  }{\pi})\partial_t {p}  +\widetilde 
L_z\log\frac{{p}  }{\pi}\partial_t {p}   \Big\}dx \\
&=&-2\int \Big\{\quad\frac{1}{2}\Gamma_1^z(\log\frac{{p}  }{\pi}, \log\frac{{p}  }{\pi})\nabla\cdot({p}  \gamma)+\widetilde L_z\log\frac{{p}  }{\pi}\nabla\cdot({p}  \gamma)\\
&&\hspace{1.5cm}+\frac{1}{2}\Gamma_1^z(\log\frac{{p}  }{\pi}, \log\frac{{p}  }{\pi})\widetilde L^*{p}  +\widetilde L_z\log\frac{{p}  }{\pi}\widetilde L^*{p}  \Big\} dx 
\eeaa 
\beaa 
&=&-2\int \Big\{\widetilde \Gamma_{\mathcal{I}_z}(\log\frac{{p}  }{\pi}, \log\frac{{p}  }{\pi})+\frac{1}{2}\widetilde L_z\Gamma_1(\log\frac{{p}  }{\pi}, \log\frac{{p}  }{\pi})-\Gamma_1( \widetilde L_z\log\frac{{p}  }{\pi}, \log\frac{{p}  }{\pi})\Big\} {p}  dx\\
&&-2\int \tilde L_z \log\frac{{p} }{\pi}\frac{\nabla\cdot(\pi\gamma)}{\pi} {p}  dx, 
\eeaa 
where we apply the equality in \eqref{trick: rho gamma} for $\nabla\cdot({p}  \gamma)$. Now applying [Proposition $5.11$]\cite{FengLi} (see also [Proposition 8]\cite{FengLi2021}), we have the following equality 
\begin{equation}\label{identity}
\int\Big\{\frac{1}{2}\widetilde L_z\Gamma_1(\log\frac{{p}  }{\pi}, \log\frac{{p}  }{\pi})-\Gamma_1( \widetilde L_z\log\frac{{p}  }{\pi}, \log\frac{{p}  }{\pi})\Big\} {p}  dx=\int  \widetilde\Gamma_2^{z,\pi}(\log\frac{{p}  }{\pi},\log\frac{{p}  }{\pi}) {p}   dx.
\end{equation}
We then have, 
	\beaa 
	\pa_t \mathrm I_{z}({p} \|\pi)&=& -2\int\Big[\widetilde \Gamma_2^{z,\pi}(\log\frac{{p} }{\pi},\log\frac{{p} }{\pi})+\widetilde \Gamma_{\mathcal I_z}(\log\frac{{p} }{\pi},\log\frac{{p} }{\pi}) \Big]{p}  dx-2\int \tilde L_z\log\frac{{p} }{\pi}\frac{\nabla\cdot(\pi\gamma)}{\pi} {p}  dx  \\
	&&+\int \la \nabla\log\frac{{p} }{\pi},\pa_t(zz^{\ts})\nabla\log\frac{{p} }{\pi}\ra {p}  dx -2\int \la \nabla\pa_t \log\pi,zz^{\ts}\nabla\log\frac{{p} }{\pi}\ra {p}  dx.
	\eeaa
Observing the following equality, we have 
\bea\label{term 1}
&&-2\int \la \nabla\pa_t \log\pi,zz^{\ts}\nabla\log\frac{{p} }{\pi}\ra {p}  dx\nonumber \\
&=& 2\int  \pa_t \log\pi \frac{\nabla\cdot(  {p}  zz^{\ts}\nabla\log\frac{{p} }{\pi})}{{p} } {p}  dx\nonumber\\
&=&2\int\Big[ \nabla\cdot (zz^{\ts}\nabla\log\frac{{p} }{\pi})+\la \nabla\log\frac{{p} }{\pi},zz^{\ts}\nabla\log\frac{{p} }{\pi}\ra+\la \nabla\log\pi,zz^{\ts}\nabla\log\frac{{p} }{\pi}\rangle\Big] \pa_t\log\pi {p}  dx\nonumber\\
&=&2\int \Big[ \tilde L_z  \log\frac{{p} }{\pi}+\la \nabla\log\frac{{p} }{\pi},zz^{\ts}\nabla\log\frac{{p} }{\pi}\ra \Big] \pa_t\log\pi {p}  dx,
\eea
which implies
\beaa
\pa_t\mathrm I_{z
}({p} \|\pi)	&=&  -2\int \Big[\widetilde\Gamma_2^{z, \pi}(\log\frac{{p} }{\pi},\log\frac{{p} }{\pi})+\widetilde\Gamma_{\mathcal I_z}(\log\frac{{p} }{\pi},\log\frac{{p} }{\pi})\Big] {p}  dx+\int \la \nabla\log\frac{{p} }{\pi},\pa_t(zz^{\ts})\nabla\log\frac{{p} }{\pi}\ra {p}  dx\\
	&&+2\int \tilde L_z \log\frac{{p} }{\pi}\frac{\pa_t\pi-\nabla\cdot(\pi\gamma)}{\pi} {p}  dx  +2\int \la \nabla\log\frac{{p} }{\pi},zz^{\ts}\nabla\log\frac{{p} }{\pi}\ra  \pa_t\log\pi {p}  dx.
\eeaa
We also have 
\beaa
&&2\int \tilde L_z \log\frac{{p} }{\pi}\mathcal{R} {p}  dx \\
&=&2\int \nabla\cdot(zz^{\ts}\nabla\log \frac{{p} }{\pi})\mathcal{R}{p}  dx + 2\int \la \nabla \log\pi,zz^{\ts}\nabla\log \frac{{p} }{\pi}\ra \mathcal{R} {p}  dx\\
&=& -2\int \la \nabla\mathcal{R},zz^{\ts}\nabla\log\frac{{p} }{\pi}\ra {p}  dx-2\int \la \nabla\log \frac{{p} }{\pi},zz^{\ts}\nabla\log\frac{{p} }{\pi}\ra\mathcal{R} {p}  dx.
\eeaa
Combining the above terms, we have 
\beaa
\pa_t\mathrm I_{z}({p} \|\pi) &=& -2\int\big[\widetilde \Gamma_2^{z, \pi}(\log\frac{{p} }{\pi},\log\frac{{p} }{\pi})+\widetilde\Gamma_{\mathcal I_z}(\log\frac{{p} }{\pi},\log\frac{{p} }{\pi}) \big]{p}  dx\\
&&+\int \la \nabla\log\frac{{p} }{\pi},[\pa_t(zz^{\ts})+2zz^{\ts}\pa_t\log\pi]\nabla\log\frac{{p} }{\pi}\ra {p}  dx\\
	&& -2\int \la \nabla \mathcal{R}, zz^{\ts}\nabla\log\frac{{p} }{\pi}\ra{p}  dx
	-2\int \la \nabla\log \frac{{p} }{\pi},zz^{\ts}\nabla\log\frac{{p} }{\pi}\ra\mathcal{R} {p}  dx.
\eeaa
Note that, 
\beaa
-2\int \la \nabla \mathcal{R}, zz^{\ts}\nabla\log\frac{{p} }{\pi}\ra{p}  dx&=&-2\int \la \nabla \mathcal{R}, zz^{\ts}\nabla{p} \ra dx+2\int \la \nabla \mathcal{R}, zz^{\ts}\nabla\log\pi\ra{p}  dx\\
&=&2\int \nabla\cdot(zz^{\ts}\nabla\mathcal{R}){p}  dx+2\int \la \nabla \mathcal{R}, zz^{\ts}\nabla\log\pi\ra{p}  dx.
\eeaa
We conclude with 
\beaa
\pa_t\mathrm I_{z}({p} \|\pi) &=& -2\int\big[\widetilde \Gamma_2^{z,\pi}(\log\frac{{p} }{\pi},\log\frac{{p} }{\pi})+\widetilde\Gamma_{\mathcal I_z}(\log\frac{{p} }{\pi},\log\frac{{p} }{\pi}) \big]{p}  dx\\
&&+\int \la \nabla\log\frac{{p} }{\pi},[\pa_t(zz^{\ts})+2zz^{\ts}\pa_t\log\pi-2zz^{\ts}\mathcal{R}]\nabla\log\frac{{p} }{\pi}\ra {p}  dx\\
	&& +\int\big[ 
	2\nabla\cdot(zz^{\ts}\nabla\mathcal{R})+2 \la \nabla \mathcal{R}, zz^{\ts}\nabla\log\pi\ra \big]{p}  dx,
\eeaa
and the result follows the fact $\pa_t\log \pi-\mathcal R=\frac{\nabla\cdot(\pi\gamma)}{\pi}$. \qed 
\end{proof}
The irreversible Gamma operator associated with matrix $z$ has the following equivalent form.
\begin{lemma}\label{lemma z: 2}
Denote $f=\log\frac{{p} }{\pi}$, we have 
	\beaa
	\int\widetilde \Gamma_{\mathcal I_z}(f,f){p}  dx&=&\frac{1}{2}\int \la \gamma,\la \nabla f, \nabla(zz^{\ts})\nabla f\ra \ra {p}  dx-\int \la zz^{\ts}\nabla f, \nabla \gamma \nabla f\ra {p}  dx\\
 &&+\int \frac{\nabla\cdot (\pi\gamma)}{\pi} \Gamma_1^z(f,f){p}  dx.
 \eeaa
\end{lemma}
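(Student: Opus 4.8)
The plan is to reproduce, essentially line for line, the argument of Lemma~\ref{lemma a: 2}, replacing throughout the reversible operator $\widetilde L$, the bilinear form $\Gamma_1$, and the tensor $aa^{\ts}$ by their $z$-counterparts $\widetilde L_z$, $\Gamma_1^z$, and $zz^{\ts}$. The one structural point to settle first is that the irreversible Gamma operator attached to $z$ is
\[
\widetilde\Gamma_{\mathcal I_z}(f,f)=\widetilde L_z f\,\la\nabla f,\gamma\ra-\tfrac12\la\nabla\Gamma_1^z(f,f),\gamma\ra ,
\]
i.e.\ that the two mixed divergence terms appearing in Definition~\ref{defn:tilde gamma 2 znew}(ii) are part of the reversible piece $\widetilde\Gamma_2^{z,\pi}$ and do not enter $\widetilde\Gamma_{\mathcal I_z}$. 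This follows at once from Definition~\ref{defn:tilde gamma 2 znew}(iii), which specifies only the sum $\Gamma_{\mathcal I_a}+\Gamma_{\mathcal I_z}$, together with the $a$-versus-$z$ symmetry of that expression and the formula for $\widetilde\Gamma_{\mathcal I_a}$ recalled before Lemma~\ref{lemma a: 2}. Consequently the computation is structurally identical to the $a$-case.

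With $f=\log\frac{p}{\pi}$ fixed, the first step is the pointwise identity
\[
\frac{\nabla\cdot(p\gamma)}{p}=\la\nabla f,\gamma\ra+\frac{\nabla\cdot(\pi\gamma)}{\pi},
\]
obtained from $p\nabla\log p=\nabla p$ and $\pi\nabla\log\pi=\nabla\pi$. Then I would write $\int\widetilde\Gamma_{\mathcal I_z}(f,f)p\,dx=\int\widetilde L_z f\,\la\nabla f,\gamma\ra p\,dx-\tfrac12\int\la\nabla\Gamma_1^z(f,f),\gamma\ra p\,dx$, integrate the second integral by parts into $\tfrac12\int\nabla\cdot(p\gamma)\,\Gamma_1^z(f,f)\,dx$, and split it by the pointwise identity into $\tfrac12\int\la\nabla f,\gamma\ra\Gamma_1^z(f,f)p\,dx$ plus $\tfrac12\int\frac{\nabla\cdot(\pi\gamma)}{\pi}\Gamma_1^z(f,f)p\,dx$; the last summand is half of the corresponding term in the claim. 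Next I would substitute $\widetilde L_z f=\nabla\cdot(zz^{\ts}\nabla f)+\la zz^{\ts}\nabla\log\pi,\nabla f\ra$ and integrate $\int\nabla\cdot(zz^{\ts}\nabla f)\la\nabla f,\gamma\ra p\,dx$ by parts: using $\nabla\big(\la\nabla f,\gamma\ra\big)=\nabla^2 f\,\gamma+\nabla\gamma\,\nabla f$, $\nabla p=p\nabla\log p$, and $\nabla\log p=\nabla f+\nabla\log\pi$, this produces $-\int\la zz^{\ts}\nabla f,\nabla^2 f\,\gamma\ra p\,dx-\int\la zz^{\ts}\nabla f,\nabla\gamma\,\nabla f\ra p\,dx$ — the second of which is already the $-\la zz^{\ts}\nabla f,\nabla\gamma\,\nabla f\ra$ term of the claim — together with $-\int\Gamma_1^z(f,f)\la\nabla f,\gamma\ra p\,dx$ and $-\int\la zz^{\ts}\nabla\log\pi,\nabla f\ra\la\nabla f,\gamma\ra p\,dx$; the last of these cancels the $\la zz^{\ts}\nabla\log\pi,\nabla f\ra\la\nabla f,\gamma\ra$ contribution coming from $\widetilde L_z f$ itself.

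At this stage the residual $\la\nabla f,\gamma\ra$-terms combine to $-\tfrac12\int\la\nabla f,\gamma\ra\Gamma_1^z(f,f)p\,dx$; rewriting $p\la\nabla f,\gamma\ra=\la\nabla p,\gamma\ra-p\la\nabla\log\pi,\gamma\ra$ and invoking the auxiliary identity
\[
-\tfrac12\int\la\nabla p,\gamma\ra\Gamma_1^z(f,f)\,dx=\tfrac12\int\nabla\cdot\gamma\,\Gamma_1^z(f,f)p\,dx+\int\la zz^{\ts}\nabla f,\nabla^2 f\,\gamma\ra p\,dx+\tfrac12\int\la\gamma,\la\nabla f,\nabla(zz^{\ts})\nabla f\ra\ra p\,dx
\]
(obtained, as in the $a$-case, by writing the left side as $\tfrac12\int\nabla\cdot\big(\gamma\,\Gamma_1^z(f,f)\big)p\,dx$, expanding the divergence via $\nabla\Gamma_1^z(f,f)=2(\nabla^2 f)\,zz^{\ts}\nabla f+\la\nabla f,\nabla(zz^{\ts})\nabla f\ra$, and using that $\nabla^2 f$ and $zz^{\ts}$ are symmetric), one sees that the two $\la zz^{\ts}\nabla f,\nabla^2 f\,\gamma\ra$ contributions cancel, and that $\tfrac12\int\nabla\cdot\gamma\,\Gamma_1^z(f,f)p\,dx$ together with $\tfrac12\int\la\nabla\log\pi,\gamma\ra\Gamma_1^z(f,f)p\,dx$ recombine through $\frac{\nabla\cdot(\pi\gamma)}{\pi}=\la\nabla\log\pi,\gamma\ra+\nabla\cdot\gamma$ into the missing half of $\int\frac{\nabla\cdot(\pi\gamma)}{\pi}\Gamma_1^z(f,f)p\,dx$; what is left is exactly the right-hand side of the lemma. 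The main obstacle is purely bookkeeping: the term $\la zz^{\ts}\nabla f,\nabla^2 f\,\gamma\ra$ is generated twice, once when $\Gamma_1^z(f,f)$ is integrated by parts against $\gamma$ and once when $\widetilde L_z f$ is integrated by parts against $\la\nabla f,\gamma\ra p$, so one has to keep all signs straight to see those Hessian contributions, and the one hidden in the divergence expansion, cancel precisely. Since that cancellation is insensitive to whether the underlying tensor is $aa^{\ts}$ or $zz^{\ts}$, the $z$-case goes through verbatim.
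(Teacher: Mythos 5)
Your proposal is correct and follows the paper's own proof of Lemma~\ref{lemma z: 2} essentially line for line: the same identification of $\widetilde\Gamma_{\mathcal I_z}(f,f)=\widetilde L_z f\,\la\nabla f,\gamma\ra-\tfrac12\la\nabla\Gamma_1^z(f,f),\gamma\ra$, the same splitting via $\frac{\nabla\cdot(p\gamma)}{p}=\la\nabla f,\gamma\ra+\frac{\nabla\cdot(\pi\gamma)}{\pi}$, the same two integrations by parts, and the same auxiliary identity for $-\tfrac12\int\la\nabla p,\gamma\ra\Gamma_1^z(f,f)\,dx$ producing the cancellation of the $\la zz^{\ts}\nabla f,\nabla^2 f\,\gamma\ra$ contributions. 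One minor remark: your sign $-\int\la zz^{\ts}\nabla f,\nabla\gamma\,\nabla f\ra p\,dx$ in the intermediate expansion is the correct one and is what the final identity requires; the paper's displayed intermediate line shows a minus inside the bracket $[\la zz^{\ts}\nabla f,\nabla^2 f\gamma\ra-\la zz^{\ts}\nabla f,\nabla\gamma\nabla f\ra]$ which appears to be a typographical slip, since its final line agrees with yours.
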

\begin{proof}
We will use the following fact again 
\beaa
\frac{\nabla\cdot({p}  \gamma)}{{p} }=\la \nabla \log{p} ,\gamma\ra +\nabla\cdot\gamma =\la \nabla \log\frac{{p} }{\pi},\gamma\ra+\la \nabla \log \pi,\gamma\ra +\nabla\cdot\gamma =\la \nabla f,\gamma\ra +\frac{\nabla\cdot (\pi\gamma)}{\pi}.
\eeaa
We have 
	\beaa
	&&\int \widetilde\Gamma_{\mathcal I_z}(f,f){p}  dx\\
	&=& \int [\widetilde L_z f\la \nabla f, \gamma\ra -\frac{1}{2}\la \nabla\Gamma_1^z(f,f),\gamma\ra ]{p}  dx\\
	&=&\int \Big[\nabla\cdot(zz^{\ts}\nabla f)\la \nabla f, \gamma\ra+\la zz^{\ts}\nabla\log\pi,\nabla f\ra \la \nabla f, \gamma\ra\Big]{p}  dx+\frac{1}{2}\int\nabla\cdot ({p} \gamma)\Gamma_1^z(f,f)dx\\
	&=&\int \Big[\nabla\cdot(zz^{\ts}\nabla f)\la \nabla f, \gamma\ra+\la zz^{\ts}\nabla\log\pi,\nabla f\ra \la \nabla f, \gamma\ra\Big]{p}  dx\\
	&&+\frac{1}{2}\int [\la \nabla f,\gamma\ra +\frac{\nabla\cdot (\pi\gamma)}{\pi} ]\Gamma_1^z(f,f){p}  dx\\
	&=&\int \Big[-\la zz^{\ts}\nabla f,\nabla \log {p} \ra  \la \nabla f, \gamma\ra+\la zz^{\ts}\nabla\log\pi,\nabla f\ra \la \nabla f, \gamma\ra\Big]{p}  dx+\frac{1}{2}\int \la \nabla f,\gamma\ra\Gamma_1^z(f,f){p}  dx\\
&&-\int [\la zz^{\ts}\nabla f, \nabla^2 f\gamma\ra-\la zz^{\ts}\nabla f,\nabla\gamma\nabla f\ra]{p}  dx+\frac{1}{2}\int \frac{\nabla\cdot (\pi\gamma)}{\pi} \Gamma_1^z(f,f){p}  dx\\
&=&-\frac{1}{2}\int \la \nabla f,\gamma\ra\Gamma_1^z(f,f){p}  dx-\int [\la zz^{\ts}\nabla f, \nabla^2 f\gamma\ra-\la zz^{\ts}\nabla f,\nabla\gamma\nabla f\ra]{p}  dx+\frac{1}{2}\int \frac{\nabla\cdot (\pi\gamma)}{\pi} \Gamma_1^z(f,f){p}  dx\\
&=&-\frac{1}{2}\int \la \nabla  {p} ,\gamma\ra\Gamma_1^z(f,f) dx+\frac{1}{2}\int \la \nabla \log \pi,\gamma\ra\Gamma_1^z(f,f){p}  dx\\
&&-\int [\la zz^{\ts}\nabla f, \nabla^2 f\gamma\ra-\la zz^{\ts}\nabla f,\nabla\gamma\nabla f\ra]{p}  dx+\frac{1}{2}\int \frac{\nabla\cdot (\pi\gamma)}{\pi} \Gamma_1^z(f,f){p}  dx\\
&=&\frac{1}{2}\int \la \gamma,\la \nabla f, \nabla(zz^{\ts})\nabla f\ra \ra {p}  dx-\int \la zz^{\ts}\nabla f, \nabla \gamma \nabla f\ra {p}  dx +\int \frac{\nabla\cdot (\pi\gamma)}{\pi} \Gamma_1^z(f,f){p}  dx.
	\eeaa
	The last equality follows from the fact that 
	\beaa
&&-\frac{1}{2}\int \la \nabla  {p} ,\gamma\ra\Gamma_1^z(f,f) dx=	\frac{1}{2}\int \nabla\cdot (\gamma \Gamma_1^z(f,f)){p}  dx\\
&=&\frac{1}{2}\int\nabla\cdot \gamma \Gamma_1^z(f,f){p}  dx +\int \la zz^{\ts}\nabla f, \nabla^2 f\gamma\ra {p}  dx+\frac{1}{2}\int \la \gamma,\la \nabla f, \nabla(zz^{\ts})\nabla f\ra \ra {p}  dx,
	\eeaa
	and 
	$\frac{\nabla\cdot(\pi \gamma)}{\pi}	=\la \nabla \log\pi,\gamma\ra +\nabla\cdot \gamma$.
	\qed
\end{proof}

\section{Example I: reversible SDE}\label{sec4}
This example considers an inhomogeneous stochastic differential equation (SDE).
\begin{equation}\label{VSDE-gradient}
\begin{split}
    dX_t=&\Big(-\alpha(t,X_t)\alpha(t,X_t)^{\ts}\nabla V(X_t)+\beta(t)\nabla\cdot\Big(\alpha(t,X_t)\alpha(t,X_t)^{\ts}\Big)\Big)dt\\
    &+\sqrt{2\beta(t)} \alpha(t,X_t) dB_t,
    \end{split}
\end{equation}
   where $n=d$, $m=0$, $X_t\in \mathbb R^d$, $B_t$ is a standard $d$-dimensinal Brownian motion, $\beta(t)\in\mathbb{R}^1_+$ is a positive, twice continuously differentiable, decreasing function, $V\in \mathbb C^{2}(\mathbb{R}^d;\mathbb{R})$ and $\alpha(t,x)\in \mathbb R^{d\times d}$ is a positive definite matrix function with at least twice differentiable in $x$ and differentiable in $t$. We denote $a(t,x)=\sqrt{\beta(t)}\alpha(t,x)$. And we assume that $a$ satisfies the uniform non-degenerate condition (see, e.g., \cite{KusuokaStrook}). Hence there exists a smooth density function for the solution $X_t$, denoted as ${p} (t,x)$. Furthermore, we denote $\pi(t,x)\in\mathbb{R}_+$ as a time-dependent probability density function with 
\begin{equation}\label{defn: pi}
\pi(t,x):=\frac{1}{Z(t)}e^{-\frac{V(x)}{\beta(t)}}, 
\end{equation}
where we assume that the normalization constant is finite, i.e., $Z(t)=\int_{\mathbb{R}^d} e^{-\frac{V(y)}{\beta(t)}}dy<\infty$. We note that $\pi(t,x)$ is not the stationary distribution of the SDE \eqref{VSDE-gradient}.

\subsection{Convergence analysis}
As a special case of Proposition \ref{prop: dissipation of fisher a}, with $\gamma\equiv 0$ and $z(t,x)\equiv 0$, we have the following  lemma.
\begin{lemma} \label{lemma: Fisher information decay}
For any $t\ge 0$, we have
	\bea
\pa_t\mathrm I_{a}({p} \|\pi) &=& -2\int \widetilde \Gamma_2(\log\frac{{p} }{\pi},\log\frac{{p} }{\pi}){p}  dx+\int \la \nabla\log\frac{{p} }{\pi},\pa_t(aa^{\ts})\nabla\log\frac{{p} }{\pi}\ra {p}  dx\\
	&& +2\int\big[
	\nabla\cdot(aa^{\ts}\nabla \pa_t\log\pi)+ \la \nabla \pa_t\log\pi, aa^{\ts}\nabla\log\pi\ra \big]{p}  dx.\nonumber
	\eea
\end{lemma}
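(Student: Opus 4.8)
The plan is to obtain this identity as an immediate specialization of Proposition~\ref{prop: dissipation of fisher a}, rather than re-deriving it from scratch. First I would verify that, for the SDE~\eqref{VSDE-gradient} with reference density~\eqref{defn: pi}, the non-gradient vector field $\gamma$ of Proposition~\ref{prop: decomposition} is identically zero. Here $aa^{\ts}=\beta(t)\alpha\alpha^{\ts}$, $\nabla\log\pi(t,x)=-\nabla V(x)/\beta(t)$, and $b(t,x)=-\alpha\alpha^{\ts}\nabla V+\beta(t)\nabla\cdot(\alpha\alpha^{\ts})$; since $\beta$ does not depend on $x$, we have $\nabla\cdot(aa^{\ts})=\beta(t)\nabla\cdot(\alpha\alpha^{\ts})$, and the four contributions to $\gamma=(aa^{\ts})\nabla\log\pi-b+\nabla\cdot(aa^{\ts})$ cancel in pairs, giving $\gamma\equiv 0$.

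Next, since $m=0$ in this example and $a(t,x)=\sqrt{\beta(t)}\,\alpha(t,x)$ already has full rank $d=n+m$, the complementary matrix may be taken as $z(t,x)\equiv 0$, so $\mathrm I_z(p\|\pi)\equiv 0$ and $\mathrm I_{a,z}=\mathrm I_a$. I would then apply Proposition~\ref{prop: dissipation of fisher a} directly: with $\gamma\equiv 0$ the two integrals in that statement involving $\gamma$ and $\nabla\gamma$ vanish, and from the definition~\eqref{correction R} the correction term collapses to $\mathcal R(t,x,\pi)=\bigl(\pa_t\pi-\nabla\cdot(\pi\gamma)\bigr)/\pi=\pa_t\pi/\pi=\pa_t\log\pi$. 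Substituting $\mathcal R=\pa_t\log\pi$ into the three surviving terms of Proposition~\ref{prop: dissipation of fisher a} produces exactly the claimed formula, after using the trivial rewriting $2\nabla\cdot(aa^{\ts}\nabla\mathcal R)+2\la\nabla\mathcal R,aa^{\ts}\nabla\log\pi\ra = 2[\nabla\cdot(aa^{\ts}\nabla\pa_t\log\pi)+\la\nabla\pa_t\log\pi,aa^{\ts}\nabla\log\pi\ra]$.

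I do not expect a genuine obstacle here: the entire analytic content already lives in Proposition~\ref{prop: dissipation of fisher a} (hence in Lemmas~\ref{lemma a: 1} and~\ref{lemma a: 2}). The only steps needing care are the bookkeeping check that $\gamma\equiv 0$---in particular the fact that $\nabla\cdot(\beta(t)\alpha\alpha^{\ts})=\beta(t)\nabla\cdot(\alpha\alpha^{\ts})$ because $\beta$ is $x$-independent---and the identification $\mathcal R=\pa_t\log\pi$, which is precisely where the time-inhomogeneity of $\pi$ enters. An alternative would be to repeat the integration-by-parts computations of Lemmas~\ref{lemma a: 1} and~\ref{lemma a: 2} with $\gamma=0$ built in from the start, but that merely reproduces work already done, so the specialization route is cleaner.
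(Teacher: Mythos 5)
Your proposal is correct and matches the paper's own approach: the paper states the lemma as ``a special case of Proposition~\ref{prop: dissipation of fisher a}, with $\gamma\equiv 0$ and $z(t,x)\equiv 0$,'' and you have simply made explicit the bookkeeping that justifies that specialization (the cancellation giving $\gamma\equiv 0$, the collapse $\mathcal R=\pa_t\log\pi$, and the disappearance of the two $\gamma$-dependent integrals). The remark about choosing $z\equiv 0$ is harmless context but not actually needed for this lemma, since Proposition~\ref{prop: dissipation of fisher a} concerns $\mathrm I_a$ alone.
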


As a special case of Proposition \ref{prop: Fisher information decay}, following Lemma \ref{lemma: Fisher information decay}, we have
\begin{equation*}
\widetilde \Gamma_{2}(\log\frac{{p} }{\pi},\log\frac{{p} }{\pi})\ge \mathfrak{R}(\nabla \log\frac{{p} }{\pi},\nabla \log\frac{{p} }{\pi}), 
\end{equation*}
where $\mathfrak R$ (as defined in Appendix \ref{appendix}) denotes the Ricci curvature tensor in this example with $\gamma(t,x)\equiv 0$, $z(t,x)\equiv 0$, and $a(t,x)=\sqrt{\beta(t)}\alpha(t,x)$.
We then have the following Fisher information functional decay for $\mathrm I_a(t):=\mathrm I_{a}({p} (t,\cdot)\|\pi(t,\cdot))$. 
\begin{theorem}\label{thm: Fisher information decay1}
Assume that there exists a positive function $\lambda(t)>0$, such that
\begin{equation}\label{LSI condition}
   \mathfrak R -\frac{1}{2}\partial_t(aa^{\ts}) 
    \succeq \lambda(t) aa^{\ts},
\end{equation}
for $t\ge t_0$ with some constant $t_0>0$.
Then we have
\begin{equation*}
    \mathrm I_{a}(t)\leq  e^{-2\int_{t_0}^t \lambda(r)dr}\Big(\int_{t_0}^t 2\mathrm A(r)e^{2\int_{t_0}^r\lambda(\tau)d\tau} dr+\mathrm I_a(t_0)\Big),
\end{equation*}
where $A(t)$ is a function depending on the time variable
\bea 
\mathrm A(t):=\int\big[ 
	\nabla\cdot(aa^{\ts}\nabla \pa_t\log\pi)+\la \nabla \pa_t\log\pi, aa^{\ts}\nabla\log\pi\ra \big]{p}  dx.
 \eea 
\end{theorem}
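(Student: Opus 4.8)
The plan is to recognize Theorem~\ref{thm: Fisher information decay1} as the specialization of Theorem~\ref{thm: Fisher information decay az} to the reversible setting $\gamma\equiv 0$, $z\equiv 0$, and to reprove it directly from Lemma~\ref{lemma: Fisher information decay} for self-containedness. First I would record that when $\gamma\equiv 0$ the correction term \eqref{correction R} reduces to $\mathcal R(t,x,\pi)=\pa_t\pi/\pi=\pa_t\log\pi$, so that the quantity $\mathrm A(t)$ in the statement coincides with the $\mathrm A(r)$ of Theorem~\ref{thm: Fisher information decay az} while $\mathrm Z\equiv 0$; thus the asserted inequality is literally the conclusion of that theorem in this case.

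For a direct argument, I would differentiate $\mathrm I_a(t)$ via Lemma~\ref{lemma: Fisher information decay}, giving
\[
\pa_t\mathrm I_a = -2\int \widetilde\Gamma_2\Big(\log\tfrac{p}{\pi},\log\tfrac{p}{\pi}\Big)p\,dx+\int \la\nabla\log\tfrac{p}{\pi},\pa_t(aa^{\ts})\nabla\log\tfrac{p}{\pi}\ra p\,dx+2\mathrm A(t).
\]
Then, invoking the Bochner-type inequality $\widetilde\Gamma_2(f,f)\ge\mathfrak R(\nabla f,\nabla f)$ recorded just before the theorem --- which is the $\beta=0$, $z\equiv 0$ case of the information Bochner formula of \cite{FengLi2021}, using $\|\mathfrak{Hess}_\beta f\|_{\mathrm F}^2\ge 0$ --- I would combine the first two terms into $-2\int\big(\mathfrak R-\tfrac12\pa_t(aa^{\ts})\big)(\nabla\log\tfrac{p}{\pi},\nabla\log\tfrac{p}{\pi})p\,dx$, and apply the hypothesis \eqref{LSI condition} to bound this by $-2\lambda(t)\int\la\nabla\log\tfrac{p}{\pi},aa^{\ts}\nabla\log\tfrac{p}{\pi}\ra p\,dx=-2\lambda(t)\,\mathrm I_a(t)$. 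This yields the scalar differential inequality $\pa_t\mathrm I_a(t)\le -2\lambda(t)\,\mathrm I_a(t)+2\mathrm A(t)$ for $t\ge t_0$.

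Finally I would integrate with the integrating factor $e^{2\int_{t_0}^t\lambda}$, exactly as in the proof of Theorem~\ref{thm: Fisher information decay az}: set $Q(t)$ to solve $Q'(t)+2\lambda(t)Q(t)=2\mathrm A(t)$ with $Q(t_0)=\mathrm I_a(t_0)$, observe $(\mathrm I_a-Q)'\le-2\lambda\,(\mathrm I_a-Q)$, and conclude by Gronwall that $\mathrm I_a(t)\le Q(t)=e^{-2\int_{t_0}^t\lambda(r)dr}\big(\int_{t_0}^t 2\mathrm A(r)e^{2\int_{t_0}^r\lambda(\tau)d\tau}\,dr+\mathrm I_a(t_0)\big)$. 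The only genuinely non-routine ingredient is the curvature comparison $\widetilde\Gamma_2\ge\mathfrak R$, which is imported from \cite{FengLi2021}; everything else is specialization bookkeeping plus a one-line Gronwall step, so I do not expect a real obstacle beyond checking that the integrations by parts in Lemma~\ref{lemma: Fisher information decay} and the Gronwall comparison are justified under the standing smoothness and uniform non-degeneracy assumptions on $a=\sqrt{\beta}\,\alpha$, $V$, and $\pi$.
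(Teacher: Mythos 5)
Your proposal is correct and follows essentially the same route as the paper: the paper proves Theorem~\ref{thm: Fisher information decay1} in one line by citing Theorem~\ref{thm: Fisher information decay az} with $\gamma\equiv 0$, $z\equiv 0$, $\mathcal R=\pa_t\log\pi$, and your first paragraph identifies exactly that specialization (correctly noting that $\mathrm A$ of the two statements coincide and $\mathrm Z\equiv 0$). Your second and third paragraphs simply unwind the proof of the general theorem in this special case --- Lemma~\ref{lemma: Fisher information decay} gives the exact identity for $\pa_t\mathrm I_a$, the $\beta=0$, $z\equiv 0$ Bochner inequality $\widetilde\Gamma_2\ge\mathfrak R$ from \cite{FengLi2021} (recorded just before the theorem) lets you absorb $-\frac12\pa_t(aa^{\ts})$ into the Hessian-matrix hypothesis, and the choice $Q(t_0)=\mathrm I_a(t_0)$ in the integrating-factor/Gronwall step is a harmless simplification of the paper's bookkeeping (in the paper the $Q(t_0)$-terms cancel anyway). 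No gaps.
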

\begin{proof}
The proof follows from Theorem \ref{thm: Fisher information decay az} with $\mathcal R= \partial_t \log \pi$, and our choice of parameters for SDE \eqref{VSDE-gradient}.
\qed 
\end{proof}

\subsection{Time dependent overdamped Langevin dynamics}
In this section, we present an explicit example of the convergence result in Theorem \ref{thm: Fisher information decay1}. Consider the overdamped Langevin dynamics
\begin{equation}\label{eq:langevin}
    dX_t=-\nabla V(X_t)+\sqrt{2\beta(t)}dB_t.
\end{equation}
And the diffusion matrix $a(t,x)\in\hR^{d\times d}$ has the following form,
\bea\label{defn: beta}
a(t,x)=\sqrt{\beta(t)}\mathbb{I}, 
\eea
where $\mathbb{I}\in \mathbb{R}^{d\times d}$ is an identity matrix. 
\begin{corollary}\label{cor: Fisher information decay} Let $\beta(t)=\frac{C}{\log t}$ for some constant $C>0$, and $t\ge t_0>e$ for some constant $t_0>0$. Assume $\nabla^2_{xx}V \succeq \lambda_0\mathbb I$, for some constant $\lambda_0>0$, and $\int_{\hR^d} ( \|\nabla_x V\|^2+|\Delta_x V| ) {p} (t,x)dx \le \bar C$, for some constant $\bar C>0$. Then there exists a constant $C_0>0$, such that 
	\beaa 
	 \mathrm I_{a}({p} (t,\cdot)\|\pi(t,\cdot))\leq \frac{C_0}{t}. 
	\eeaa 
\end{corollary}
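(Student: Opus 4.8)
The plan is to invoke Theorem~\ref{thm: Fisher information decay1} with the explicit parameters of the overdamped dynamics \eqref{eq:langevin} and then track the prefactors produced by the exponential weights. Since $\gamma\equiv 0$, $z\equiv 0$ and $a(t,x)=\sqrt{\beta(t)}\,\mathbb I$ does not depend on $x$, the information Bochner formula collapses to the classical one, and the Ricci tensor of this example reduces to $\mathfrak R(t,x)=\beta(t)\nabla^2_{xx}V(x)$. Using $\nabla^2_{xx}V\succeq\lambda_0\mathbb I$, $aa^{\ts}=\beta(t)\mathbb I$ and $\pa_t(aa^{\ts})=\dot\beta(t)\mathbb I$ with $\dot\beta(t)<0$ (as $\beta$ decreases), condition \eqref{LSI condition} holds with $\lambda(t):=\lambda_0-\frac{\dot\beta(t)}{2\beta(t)}\ge\lambda_0>0$. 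For $\beta(t)=C/\log t$ one computes $\int_{t_0}^r\lambda(\tau)\,d\tau=\lambda_0(r-t_0)+\tfrac12\log\frac{\log r}{\log t_0}$, so that
\[
e^{2\int_{t_0}^r\lambda(\tau)d\tau}=\frac{\log r}{\log t_0}\,e^{2\lambda_0(r-t_0)},\qquad e^{-2\int_{t_0}^t\lambda(r)dr}=\frac{\log t_0}{\log t}\,e^{-2\lambda_0(t-t_0)}.
\]
In particular the initial--data term $e^{-2\int_{t_0}^t\lambda}\,\mathrm I_a(t_0)$ decays exponentially in $t$ and is therefore bounded by $C_1/t$ for $t\ge t_0$.

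Next I would bound $\mathrm A(r)$. With $\mathcal R=\pa_t\log\pi$ and $\pi(t,x)=Z(t)^{-1}e^{-V(x)/\beta(t)}$ one gets $\nabla\pa_t\log\pi=\frac{\dot\beta(t)}{\beta(t)^2}\nabla V$ and $aa^{\ts}\nabla\log\pi=-\nabla V$, hence
\[
\mathrm A(r)=\frac{\dot\beta(r)}{\beta(r)}\int \Delta_x V\,p(r,x)\,dx-\frac{\dot\beta(r)}{\beta(r)^2}\int|\nabla_x V|^2\,p(r,x)\,dx .
\]
Since $|\dot\beta(r)|/\beta(r)=1/(r\log r)$ and $|\dot\beta(r)|/\beta(r)^2=1/(Cr)$, the hypothesis $\int(\|\nabla_x V\|^2+|\Delta_x V|)\,p(r,x)\,dx\le\bar C$ yields $|\mathrm A(r)|\le \bar C'/r$ for $r\ge t_0>e$, with $\bar C':=\bar C\,(1+C^{-1})$.

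Feeding these two steps into Theorem~\ref{thm: Fisher information decay1}, the main term is at most
\[
\frac{\log t_0}{\log t}e^{-2\lambda_0(t-t_0)}\int_{t_0}^t\frac{2\bar C'}{r}\cdot\frac{\log r}{\log t_0}e^{2\lambda_0(r-t_0)}\,dr=\frac{2\bar C'\,e^{-2\lambda_0 t}}{\log t}\int_{t_0}^t\frac{\log r}{r}e^{2\lambda_0 r}\,dr .
\]
The crux is the estimate $\int_{t_0}^t\frac{\log r}{r}e^{2\lambda_0 r}\,dr\le\frac{1}{\lambda_0}\,\frac{\log t}{t}\,e^{2\lambda_0 t}$, which I would prove by integration by parts: writing $g(r)=\log r/r$ one has $|g'(r)|=\frac{\log r-1}{r^2}\le\frac{g(r)}{r}\le\frac{g(r)}{t_0}$ for $r\ge t_0>e$, the boundary contribution at $r=t_0$ carries a favorable sign, and the leftover term $\frac{1}{2\lambda_0}\int_{t_0}^t|g'|e^{2\lambda_0 r}dr\le\frac{1}{2\lambda_0 t_0}\int_{t_0}^t g\,e^{2\lambda_0 r}dr$ can be absorbed into the left-hand side once $t_0\ge 1/\lambda_0$. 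Substituting back shows the main term is $\le 2\bar C'\lambda_0^{-1}t^{-1}$, and adding the exponentially small initial--data term gives the claim with $C_0=2\bar C'\lambda_0^{-1}+C_1$.

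The hard part is precisely this last estimate. A crude bound on $\int_{t_0}^t\frac{\log r}{r}e^{2\lambda_0 r}dr$ — e.g.\ using only that $g$ is decreasing on $(e,\infty)$ — would produce a quantity of order $1$, hence a bound of order $(\log t)^{-1}$ for $\mathrm I_a(t)$ rather than $t^{-1}$; one must exploit that the exponential weight concentrates the integral near $r=t$, where $g(r)=\log r/r$ varies slowly, and the integration-by-parts identity makes this quantitative. This is also what forces $t_0$ to be taken sufficiently large (consistent with the ``$t_0>e$'' in the statement: if $\lambda_0<1/e$ one simply enlarges $t_0$ to $\max\{e,1/\lambda_0\}$).
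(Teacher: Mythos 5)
Your argument is correct and reaches the same $O(1/t)$ conclusion, but it differs from the paper's proof in two meaningful ways. First, you keep the tightest admissible rate $\lambda(t)=\lambda_0-\frac{\dot\beta(t)}{2\beta(t)}=\lambda_0+\frac{1}{2t\log t}$ when integrating the Gronwall inequality, which introduces the $\log r/\log t_0$ factors and replaces the paper's Laplace integral $\int_{t_0}^t \frac{e^{2\lambda_0 r}}{r}\,dr$ with $\frac{1}{\log t}\int_{t_0}^t \frac{\log r}{r}e^{2\lambda_0 r}\,dr$. The paper instead discards the strictly positive term and simply uses $\lambda(t)\equiv\lambda_0$; since the improvement is $O(\frac{1}{t\log t})$ it changes nothing at the level of the final $1/t$ rate, so the paper's choice is cleaner. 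Second, and more substantively, for the crux estimate of the exponentially weighted integral the paper uses a soft argument (L'H\^opital gives $e^{-2\lambda_0 t}\int_{t_0}^t\frac{e^{2\lambda_0 r}}{r}\,dr\sim\frac{1}{2\lambda_0 t}$, then a supremum over the remaining compact interval $[t_0,T]$), while you give a hands-on integration by parts with a self-absorption argument. Your version does give explicit constants, but at the cost of the side condition $t_0\gtrsim 1/\lambda_0$, which you then have to patch by enlarging $t_0$; note that to honor the statement as written you still need the compact-interval step anyway (i.e.\ for $t\in[t_0,\max\{e,1/\lambda_0\}]$ bound $\mathrm I_a(t)$ by its supremum on that interval and then by $MT/t$), which is precisely where the paper's ``$M=\sup_{[t_0,T]}$'' argument comes in; you allude to this but don't spell it out. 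With that small gap filled, the proof is fine.

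One cosmetic remark: you correctly take $\mathrm A(r)=\int\big[\frac{\dot\beta}{\beta}\Delta V-\frac{\dot\beta}{\beta^2}\|\nabla V\|^2\big]p\,dx$ as in the statement of Theorem \ref{thm: Fisher information decay1}; the paper's own proof of the corollary inserts a spurious extra factor of $2$ inside $\mathrm A(t)$, which is a harmless inconsistency, not a difference between your argument and theirs.
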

\begin{proof}
The matrix function $\mathfrak R$ defined in Appendix \eqref{appendix} is simply $\mathfrak R=\beta(t)\nabla^2_{xx}V(x)$ for equation \eqref{eq:langevin}. Applying Theorem \ref{thm: Fisher information decay1}, the Assumption \eqref{LSI condition} in Theorem \ref{thm: Fisher information decay1} is then reduced to the following condition,  
\begin{equation}\label{LSI condition a}
   {\beta(t)} \nabla^2_{xx}V(x)-\frac{1}{2}\pa_t \beta(t)\mathbb I
    \succeq \lambda(t) \beta(t) \mathbb I.
\end{equation}
For $\beta(t)=\frac{C}{\log t}$, the above condition is equivalent to
\beaa 
\frac{C}{\log t}\nabla^2_{xx}V+\frac{1}{2}\frac{C/t}{(\log t)^2}\mathbb I \succeq \lambda(t) \frac{C}{\log t}\mathbb I.
\eeaa 
Based on assumption $\nabla^2_{xx}V \succeq \lambda_0\mathbb I$ with $\lambda_0>0$, for $t\ge t_0$, and we let $\lambda(t)\equiv \lambda_0$, then
\bea \label{lambda upper bound}
\nabla^2_{xx}V+\frac{1}{2t\log t}\mathbb I\succeq  (\lambda_0+\frac{1}{2t\log t})\mathbb I\succeq \lambda_0\mathbb I=\lambda(t)\mathbb I.
\eea 
Now we turn to the estimate for $\mathrm A(t)$ in Theorem \ref{thm: Fisher information decay1}. Plugging in $\beta(t)=\frac{C}{\log t}$, we obtain 
\beaa
\nabla\cdot(aa^{\ts}\nabla \pa_t\log\pi )+\la \nabla \pa_t\log\pi, aa^{\ts}\nabla\log\pi\ra 
&=&\beta\Delta (\pa_t\log\pi)+\beta \la \nabla \pa_t\log\pi,\nabla\log e^{-V/\beta}\ra \\
&=&\frac{\pa_t\beta}{\beta} \Delta_{x}V-\frac{\pa_t\beta}{\beta^2}\| \nabla_{x}V\|^2.
\eeaa 
Applying the assumption that $\int_{\hR^d} (\|\nabla_xV\|^2+|\Delta_xV|){p} (t,x)dx\le \overline{C}$, we get 
\beaa  
\mathrm A(t)={2\int [\frac{\pa_t\beta}{\beta}\Delta_x V-\frac{\pa_t\beta}{\beta^2}\|\nabla_{x}V\|^2]{p}  dx}&\le& 2 \overline{C}( |\frac{\pa_t\beta}{\beta}| +|\frac{\pa_t\beta}{\beta^2}|)\\
&\le &2 \overline{C}( |\frac{1}{t\log t}| +|\frac{1}{Ct}|) \le  \frac{C_A}{t}, 
\eeaa 
where we denote $C_A$ as the upper bound of $\mathrm A(t)$ for $t>t_0>e$. Following the proof of Theorem \ref{thm: Fisher information decay1}, we have  
\begin{equation*}
     \frac{d}{dt}\mathrm I_a(t)\le -2\lambda_0 I_a(t)+\frac{C_A}{t}.
\end{equation*}
Hence  
\begin{equation*}
\begin{split}
    \mathrm I_a(t)\leq &e^{-2\lambda_0(t-t_0)}\Big(\int_{t_0}^t 2\frac{C_A}{r}e^{2\lambda_0(r-t_0)} dr+\mathrm I_a(t_0)\Big)\\
=&e^{-2\lambda_0(t-t_0)}\mathrm I_a(t_0)+2C_Ae^{-2\lambda_0 t}\int_{t_0}^t \frac{e^{2\lambda_0 r}}{r} dr. 
 \end{split}   
\end{equation*}
We notice that 
\begin{equation*}
   \lim_{t\rightarrow +\infty}\frac{e^{-2\lambda_0 t}\int_{t_0}^t \frac{e^{2\lambda_0 r}}{r} dr}{\frac{1}{t}}=    \lim_{t\rightarrow +\infty}\frac{t\int_{t_0}^t \frac{e^{2\lambda_0 r}}{r} dr}{e^{2\lambda_0 t}}=\lim_{t\rightarrow +\infty}\frac{\int_{t_0}^t \frac{e^{2\lambda_0 r}}{r} dr+e^{2\lambda_0 t}}{2\lambda_0 e^{2\lambda_0 t}}=\frac{1}{2\lambda_0}. 
\end{equation*}
For a sufficient small $\epsilon>0$, there exists a constant $T>0$, such that when $t>T$, 
\begin{equation*}
    e^{-2\lambda_0 t}\int_{t_0}^t \frac{e^{2\lambda_0 r}}{r} dr\leq (\frac{1}{2\lambda_0}+\epsilon)\frac{1}{t}. 
\end{equation*}
Denote $M=
\sup_{t\in [0, T]} e^{-2\lambda_0 t}\int_{t_0}^t \frac{e^{2\lambda_0 r}}{r} dr$. 
Thus, when $t_0\leq t\leq T$, we have 
\begin{equation*}
     e^{-2\lambda_0 t}\int_{t_0}^t \frac{e^{2\lambda_0 r}}{r} dr\leq M=\frac{M}{T}T\leq \frac{MT}{t}. 
\end{equation*}
Thus, there exists a constant $C_0>0$, such that 
\begin{equation*}
  \mathrm I_a(t)\leq e^{-2\lambda_0(t-t_0)}\mathrm I_a(t_0)+2\frac{C_A}{t} \max\{\frac{1}{2\lambda_0}+\epsilon, MT\} \leq \frac{C_0}{t}.  
\end{equation*}
This finishes the proof. 
\qed 
\end{proof}
Following the Fisher information decay in Corollary \ref{cor: Fisher information decay}, we get the decay of the KL divergence of the density for the dynamics \eqref{eq:langevin} as below.
\begin{corollary} \label{corolllary:KL_convergence_rate}
Under the assumptions in Theorem \ref{thm: Fisher information decay1}, for any $\tau\ge t_0$, we have 
    \begin{equation*}
    \mathrm{D}_{\mathrm{KL}}({p} (\tau)\|\pi(\tau))\le \frac{1}{2\lambda_{0}} \mathrm{I}_a({p} (\tau)\|\pi(\tau))\leq \frac{C_0}{2\lambda_0 \tau}, 
\end{equation*}
and 
\begin{equation*}
    \int_{\mathbb R^d} |{p} (\tau,x)-\pi(\tau,x)|dx\leq \sqrt{\frac{C_0}{\lambda_0\tau}}.
\end{equation*}

\end{corollary}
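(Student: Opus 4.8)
\emph{Proof proposal.} The plan is to derive the first inequality from a logarithmic Sobolev inequality for the reference density $\pi(\tau,\cdot)$, then insert the Fisher information bound of Corollary~\ref{cor: Fisher information decay}, and finally pass to the $L^1$ estimate via the Csisz\'ar--Kullback--Pinsker inequality.

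First I would fix $\tau\ge t_0$ and note that $\pi(\tau,x)=Z(\tau)^{-1}e^{-V(x)/\beta(\tau)}$ is a genuine probability density (the normalization $Z(\tau)$ is assumed finite), whose potential $W_\tau:=V/\beta(\tau)$ obeys
$$\nabla^2_{xx}W_\tau=\frac{1}{\beta(\tau)}\nabla^2_{xx}V\succeq\frac{\lambda_0}{\beta(\tau)}\mathbb I$$
by the strong convexity hypothesis $\nabla^2_{xx}V\succeq\lambda_0\mathbb I$. The Bakry--\'Emery criterion then gives that $\pi(\tau,\cdot)$ satisfies a logarithmic Sobolev inequality with constant $\lambda_0/\beta(\tau)$, which in its entropy--Fisher form reads
$$\kl\big(p(\tau,\cdot)\,\|\,\pi(\tau,\cdot)\big)\le\frac{\beta(\tau)}{2\lambda_0}\int_{\mathbb R^d}\Big|\nabla\log\frac{p(\tau,x)}{\pi(\tau,x)}\Big|^2 p(\tau,x)\,dx.$$

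Next I would use the fact that in this example the diffusion tensor is $a(t,x)a(t,x)^{\ts}=\beta(t)\mathbb I$, so by the definition~\eqref{defn: fisher} of $\mathrm I_a$ one has $\mathrm I_a(p(\tau,\cdot)\|\pi(\tau,\cdot))=\beta(\tau)\int|\nabla\log(p/\pi)|^2 p\,dx$. Substituting this into the previous display, the factor $\beta(\tau)$ cancels exactly, yielding
$$\kl\big(p(\tau,\cdot)\|\pi(\tau,\cdot)\big)\le\frac{1}{2\lambda_0}\,\mathrm I_a\big(p(\tau,\cdot)\|\pi(\tau,\cdot)\big),$$
and combining with the bound $\mathrm I_a(p(\tau,\cdot)\|\pi(\tau,\cdot))\le C_0/\tau$ from Corollary~\ref{cor: Fisher information decay} gives the first chain of inequalities. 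The $L^1$ estimate then follows from the Csisz\'ar--Kullback--Pinsker inequality $\|p(\tau,\cdot)-\pi(\tau,\cdot)\|_{L^1}^2\le 2\kl(p(\tau,\cdot)\|\pi(\tau,\cdot))$ together with the bound just obtained, since $\sqrt{2\cdot C_0/(2\lambda_0\tau)}=\sqrt{C_0/(\lambda_0\tau)}$.

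There is no genuinely hard step in this argument; the only point requiring care is the scaling bookkeeping — verifying that the Bakry--\'Emery log-Sobolev constant scales like $\beta(\tau)^{-1}$ in precisely the way needed to cancel the $\beta(\tau)$ weight built into $\mathrm I_a$, so that the resulting constant $\tfrac{1}{2\lambda_0}$ is uniform in time. One should also confirm that $p(\tau,\cdot)$ is sufficiently regular and integrable for the log-Sobolev and Pinsker inequalities to apply, which is guaranteed by the uniform non-degeneracy assumptions on $a$ made in Section~\ref{sec4}.
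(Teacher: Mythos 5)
Your proof is correct and follows essentially the same route as the paper: a Bakry--\'Emery logarithmic Sobolev inequality for $\pi(\tau,\cdot)$, the Fisher information bound from Corollary~\ref{cor: Fisher information decay}, and Pinsker's inequality. The only difference is cosmetic: you make the scaling bookkeeping explicit (the LSI constant $\lambda_0/\beta(\tau)$ cancelling against the $\beta(\tau)$ weight built into $\mathrm I_a$), whereas the paper states the resulting inequality $\kl\le\frac{1}{2\lambda_0}\mathrm I_a$ directly after invoking the ``classical log-Sobolev inequality'' and also introduces an auxiliary frozen-time SDE $X_t^\tau$ that is not actually needed for the LSI argument.
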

\begin{proof}
For any fixed $\tau \ge t_0$, we consider the standard overdamped Langevin dynamics: 
\beaa 
dX_t^\tau=-\nabla V(X_t^\tau)dt+\sqrt{2\beta(\tau)}dB_t,
\eeaa
which is equipped with the invariant measure $\pi(x;\tau)=\frac{1}{Z} e^{-\frac{V(x)}{\beta(\tau)}}$. Denote ${p} (t;\tau)$ as the density for $X_t^{\tau}$. Since $V$ is strongly convex, i.e. $\nabla^2_{xx}V\succeq \lambda_0 \mathbb I$, we have the classical log-Sobolev inequality, such that 
\begin{equation*}
    \mathrm{D}_{\mathrm{KL}}({p} (\tau)\|\pi(\tau))\le \frac{1}{2\lambda_{0}} \mathrm{I}_a({p} (\tau)\|\pi(\tau))\leq \frac{C_0}{2\lambda_0 \tau}, 
\end{equation*}
where the last inequality follows from Corollary \ref{cor: Fisher information decay}. From Pinsker's inequality, we have 
\begin{equation*}
  \int_{\mathbb{R}^d} |{p} (\tau,x)-\pi(\tau,x)|dx\leq\sqrt{2 \kl({p} (\tau)\|\pi(\tau))}\leq \sqrt{\frac{C_0}{\lambda_0\tau}}. 
\end{equation*}
\qed 
\end{proof}
\begin{remark}
When we use the upper bound of $\lambda(t)$ in \eqref{lambda upper bound} as $\frac{1}{2t\log t}$, $A(t)$ can be infinity. Thus, the current convergence analysis does not work. In other words, the choice of $\beta=\frac{C}{\log t}$ is essential for the current convergence proof, as discussed in \cite{TangZhou2021}. 
\end{remark}

\subsection{Numerics}
In this section, we perform numerical experiments to demonstrate the convergence rate in Corollary \ref{corolllary:KL_convergence_rate}. 
\begin{figure}[H]
     \centering
     \begin{subfigure}{0.32\textwidth}
         \centering
         \includegraphics[width=\textwidth]{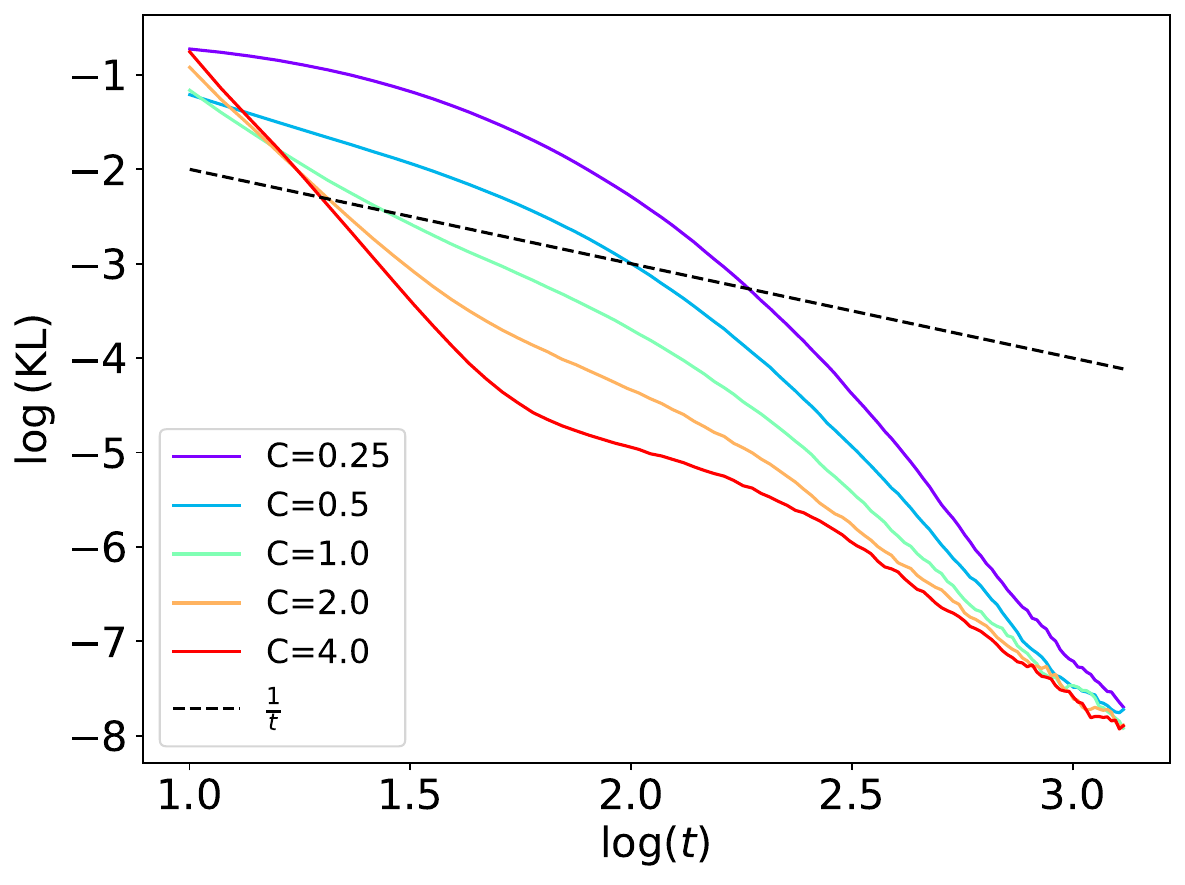}
         \caption{$V(x)=\frac{(x-1)^2}{8}$}
         \label{fig:x_square}
     \end{subfigure}
     \begin{subfigure}{0.32\textwidth}
         \centering
         \includegraphics[width=\textwidth]{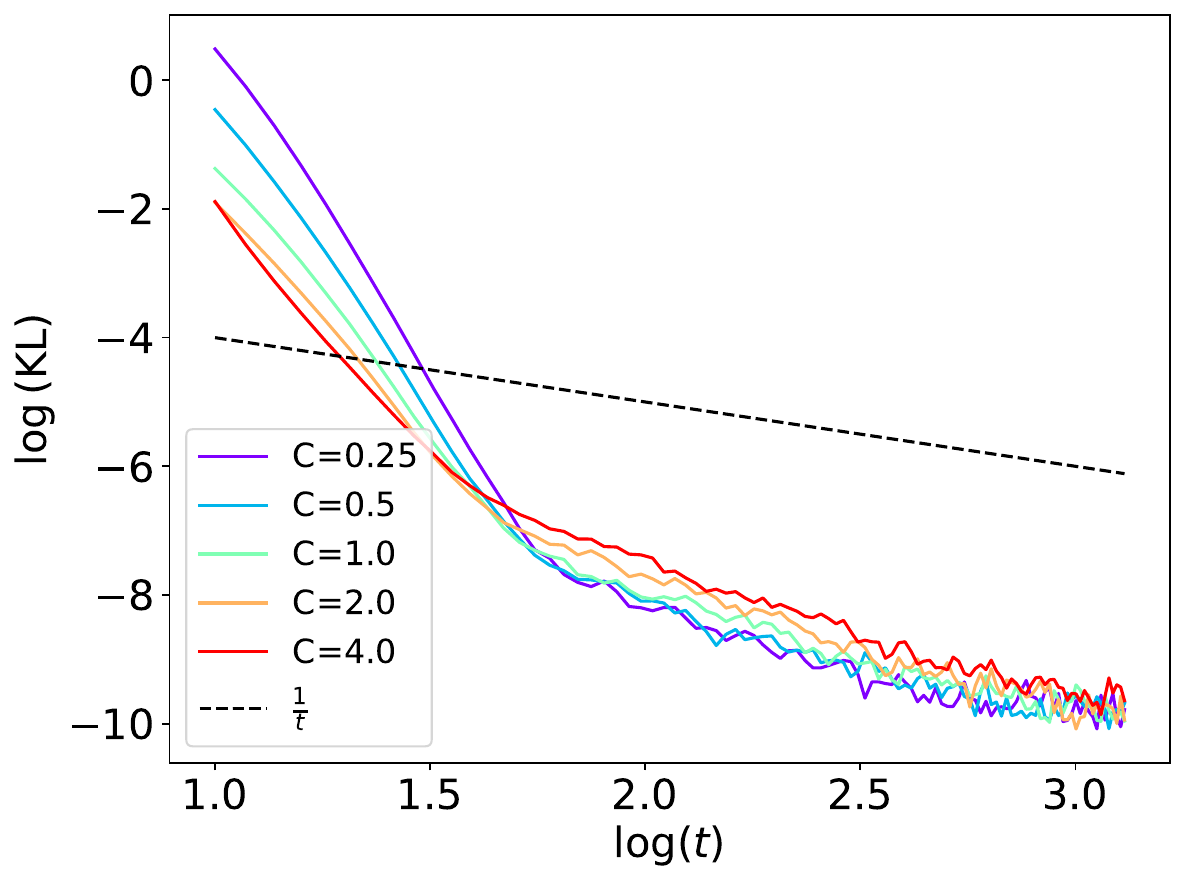}
         \caption{$V(x)=\frac{(x+1)^2}{2}-\frac{\cos(x)}{2}$}
         \label{fig:x_square_cos}
     \end{subfigure}

        \caption{Convergence rate of two strongly convex functions in one-dimension. Here y-axis represents the logarithm of the KL divergence between the empirical distribution and the invariant measure $\pi(t,x)$ given by \eqref{defn: pi}. And the x-axis is $\log(t)$. We have also added a dotted line representing $t^{-1}$ on a logarithmic scale for comparison.  }
        \label{fig:strongly_convex}
\end{figure} 

We consider $V:\RR^d \to \RR$, $d=1,2$, and $\beta(t) = \frac{C}{\log(t)}$ for some choices of constant $C$. We would like to compare the KL divergence between the invariant measure $\pi(t,x)$ given by \eqref{defn: pi} and the sample distribution of $X_t$ that follows \eqref{eq:langevin} for different choices of $V(x)$ and $\beta(t)$. We first sample $M=10^6$ particles from $\mathcal{N}(0,1)$. Then we evolve \eqref{eq:langevin} using the Euler-Maruyama scheme shown below for $N=10000$ steps with a step size of $h = 0.002$:
\begin{equation}\label{EM}
    X_{n+1} = X_n - h \nabla V(X_n) + \sqrt{\frac{2C}{\log(nh+t_0)}}B_n\,,
\end{equation}
where $B_n \sim \mathcal{N}(0,\sqrt{h})$, and $t_0 = e$. 
During each iteration, we compute the discrete KL divergence between the empirical distribution of the $M$ particles and the invariant measure $\pi(t,x)$ given by \eqref{defn: pi}. The KL divergence between two discrete distributions is given by $\mathrm{D}_{\mathrm{KL}}(p\|q)=\sum p_i \log(p_i/q_i)$. At each iteration, we can use the histogram of the empirical distribution to get $p_i$ for $i=1,...,K$. Here $K$ is the number of bins of the histogram and we choose $K=50$ in our numerical experiment. Let $x_i$ denote the location (midpoint between the left and right bin edge) of each of the bins. Then at the $n$-th iteration, we can compute $q_i = \frac{1}{Z}\exp(-\frac{V(x_i)}{\beta(nh+t_0)})$, where $Z = \int_{\RR}\exp(-\frac{V(x)}{\beta(nh+t_0)}) dx $ is the normalization constant and can be estimated numerically. The results are plotted (on a logarithmic scale) in Fig.~\ref{fig:strongly_convex} for strongly convex $V(x)$ and Fig.~\ref{fig:nonconvex} for non-convex function $V(x)$ with different constant $C$ in the expression of $\beta(t)$.   

In the strongly-convex setting (Fig.~\ref{fig:strongly_convex}), we see that the KL divergence between empirical distribution and $\pi$ decreases at a rate faster than $O(1/t)$ for all choices of $C$. In the non-convex setting (Fig.~\ref{fig:nonconvex}), we observe a convergence rate faster than $O(1/t)$ at the beginning which then drops to $O(1/t)$ as $t$ becomes larger. In two-dimension (Fig.~\ref{fig:2d}), we observe the $O(1/t)$ convergence in both the strongly convex examples (\ref{fig:x_square_2d} and \ref{fig:x_cos_2d}) and the non-convex example (\ref{fig:x_sin_2d}). In our two-dimensional examples, we used $M=10^6$ particles, $N=10000$ steps with a stepsize of $h=0.001$. We have 50 bins in both $x$ and $y$ direction which gives a total of $2500$ bins.

\begin{figure}[H]
     \centering
     \begin{subfigure}{0.32\textwidth}
         \centering
         \includegraphics[width=\textwidth]{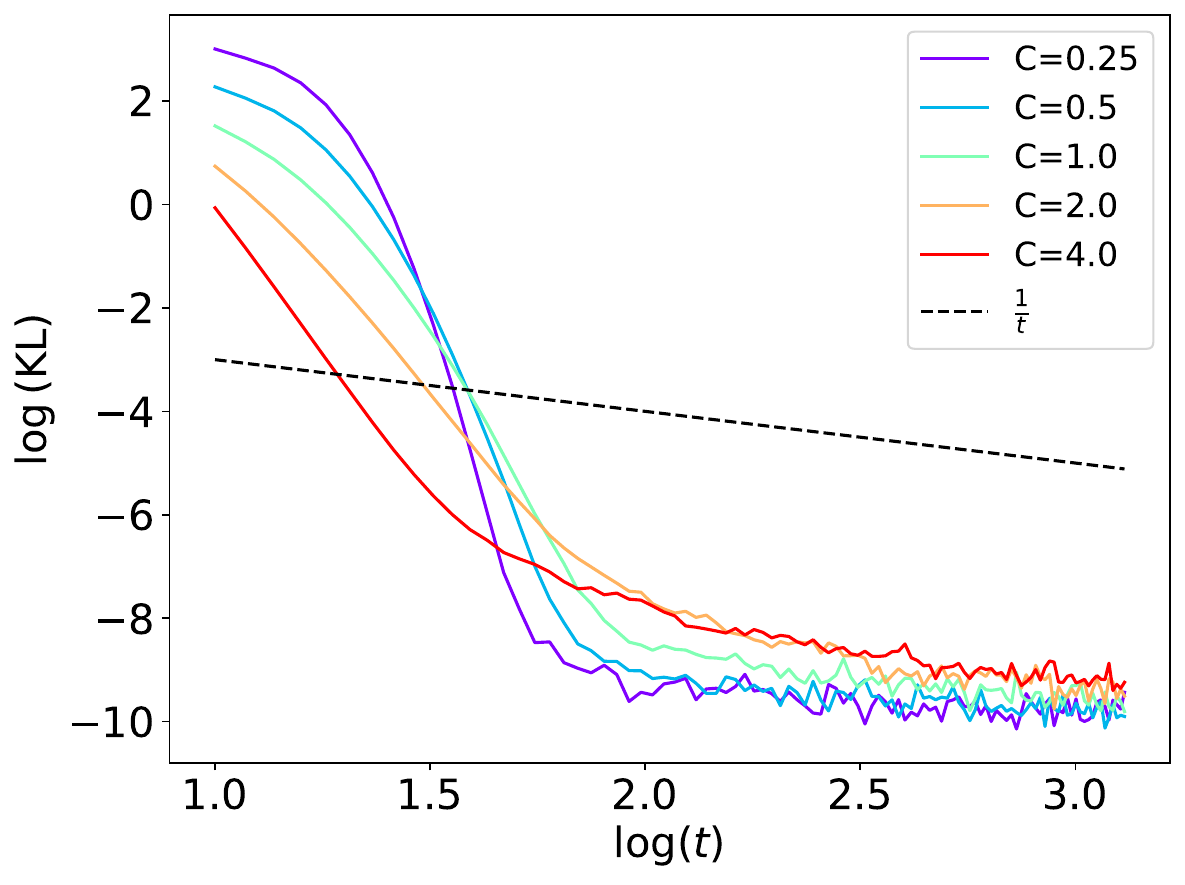}
         \caption{$V(x)=\frac{(x+2)^4}{4}-\frac{x^2}{2}+\frac{x}{8}$}
         \label{fig:x_421}
     \end{subfigure}
     \begin{subfigure}{0.32\textwidth}
         \centering
         \includegraphics[width=\textwidth]{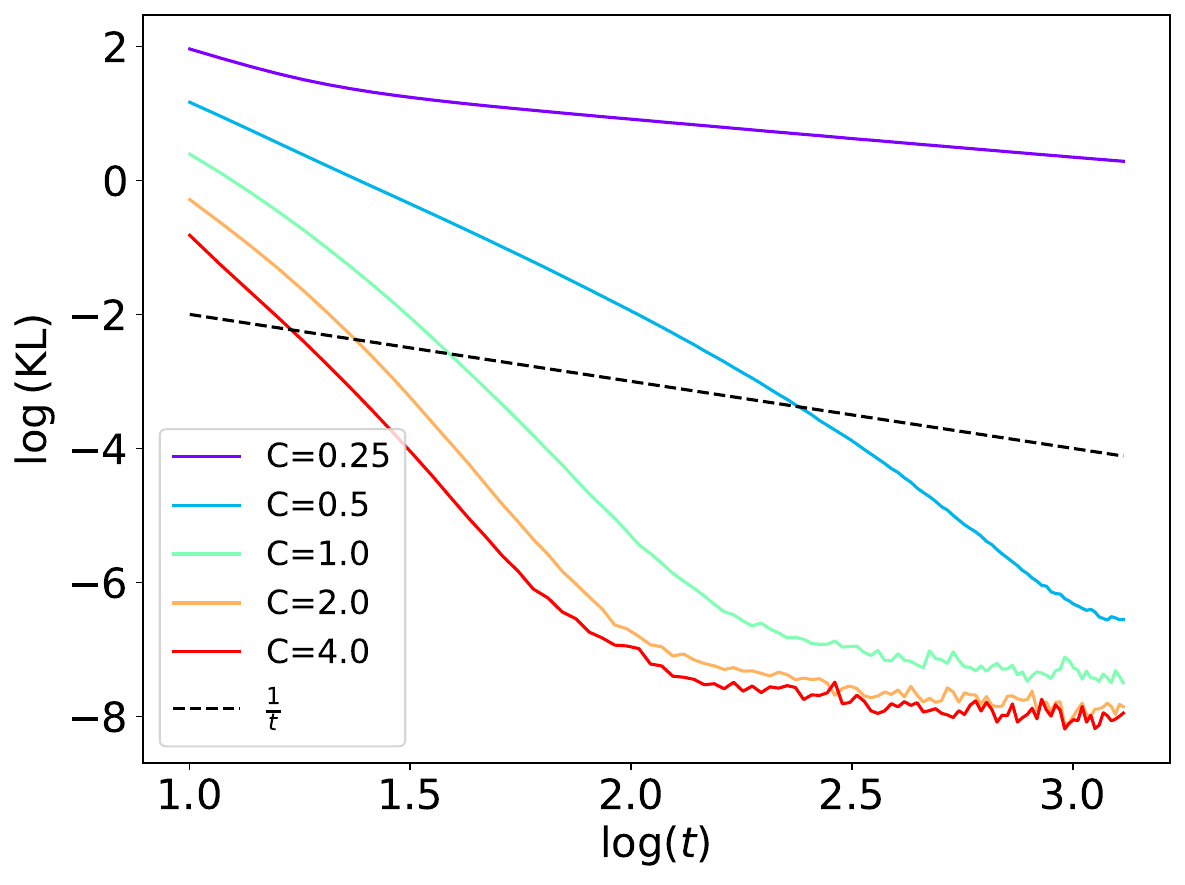}
         \caption{$V(x)=\frac{(x-2)^2}{2}-\frac{\sin(5x)}{2}$}
         \label{fig:x_square_sin}
     \end{subfigure}

        \caption{Convergence rate of two non-convex functions in one-dimension. }
        \label{fig:nonconvex}
\end{figure} 
\begin{figure}[H]
     \centering
     \begin{subfigure}{0.32\textwidth}
         \centering
         \includegraphics[width=\textwidth]{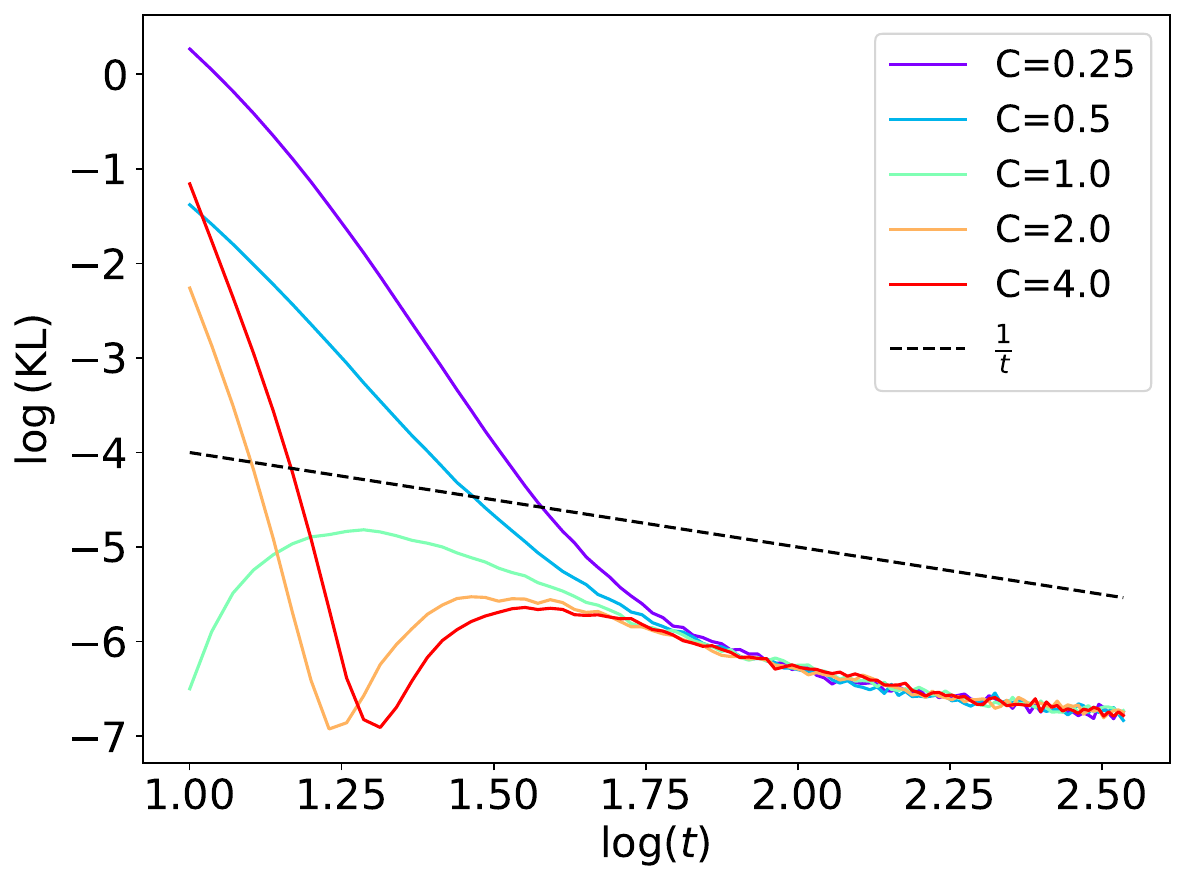}
         \caption{$\frac{x^2+y^2}{2}$}
         \label{fig:x_square_2d}
     \end{subfigure}
     \hfill
     \begin{subfigure}{0.32\textwidth}
         \centering
         \includegraphics[width=\textwidth]{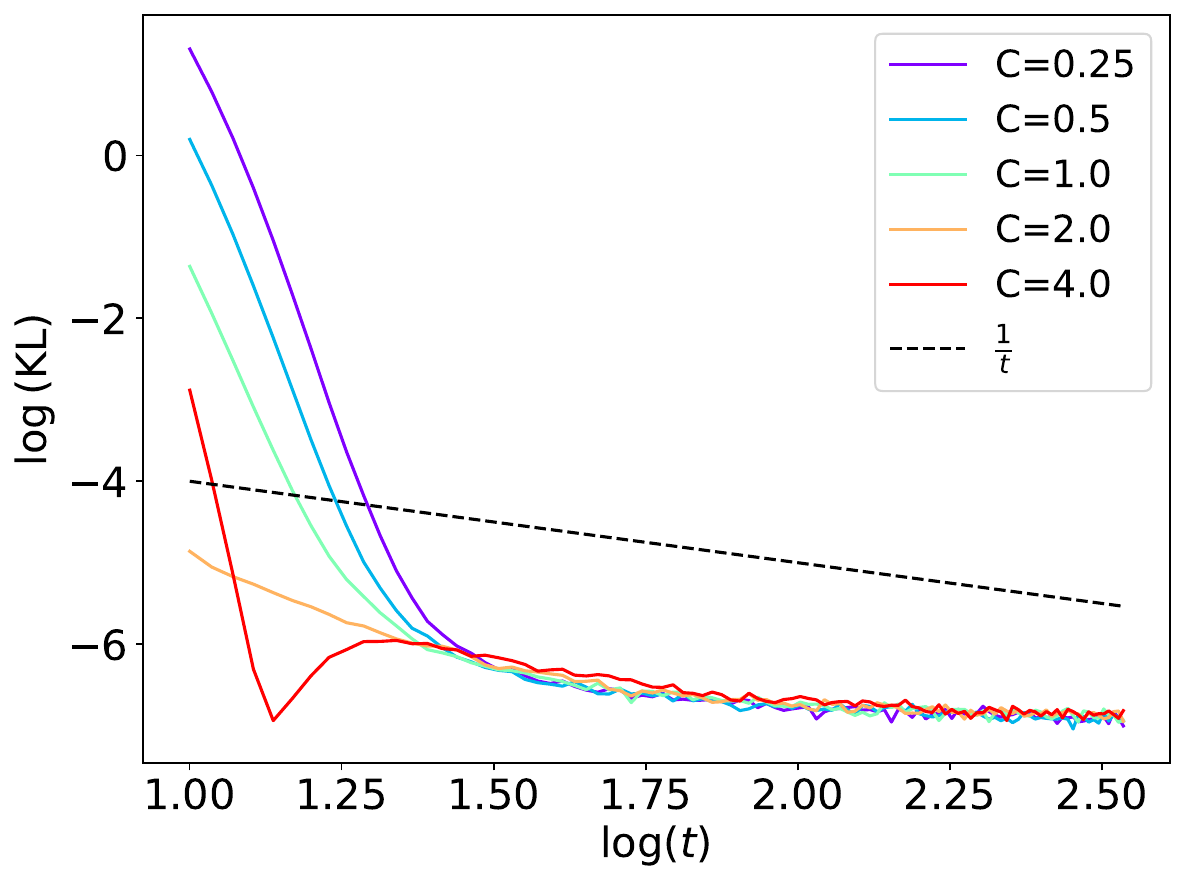}
         \caption{$x^2+y^2-\frac{\cos(x+y)}{2}$}
         \label{fig:x_cos_2d}
     \end{subfigure}
     \hfill
     \begin{subfigure}{0.32\textwidth}
         \centering
         \includegraphics[width=\textwidth]{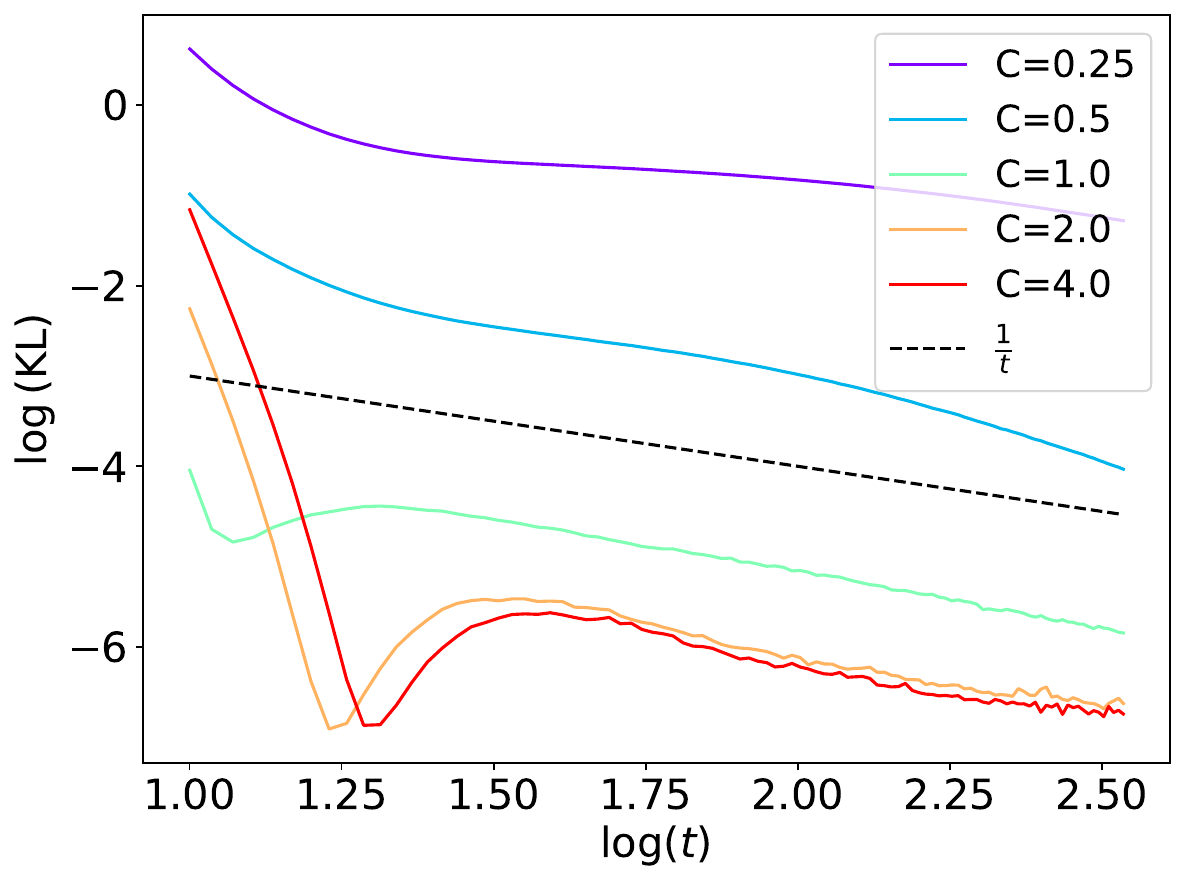}
         \caption{$ x^2+y^2-\sin(2x+2y)$}
         \label{fig:x_sin_2d}
     \end{subfigure}

        \caption{Convergence rate of two strongly convex functions (A) and (B), and a non-convex function (C) in two dimensions.} 
        \label{fig:2d}
\end{figure}

\section{Example II: non-degenerate, non-reversible SDEs}\label{sec5}
In this section, we apply Theorem \ref{thm: Fisher information decay az} to study the following non-degenerate and non-reversible SDE,
\begin{equation}\label{VSDE}
\begin{split}
    dX_t=&\Big(-\alpha(t,X_t)\alpha(t,X_t)^{\ts}\nabla V(X_t)+\beta(t)\nabla\cdot\Big(\alpha(t,X_t)\alpha(t,X_t)^{\ts}\Big)-\gamma(t,X_t)\Big)dt\\
    &+\sqrt{2\beta(t)} \alpha(t,X_t) dB_t. 
    \end{split}
\end{equation}
Again, we have $d=n$, $m=0$. The above SDE is a variant of SDE \eqref{VSDE-gradient} by adding a smooth irreversible vector field $\gamma(t,x)\in{\mathbb R}^d$, which is assumed to satisfy  
\begin{equation*}
 \nabla\cdot(e^{-V(x)}\gamma(t,x))=0.    
\end{equation*}

In the current setting, we focus on a special case with $a(t,x)=\sqrt{\beta(t)}\alpha(t,x)$. In particular, we consider the diffusion matrix $a$ in a special form, which satisfies $a_{ii}(t,x)=a_{ii}(t, x_i)=\sqrt{\beta(t)}\alpha_{ii}(x_i)>0$, for all $x_i\in \mathbb{R}$, $i=1,\cdots,n$, with   
\bea \label{non degenerate a}
\alpha(x)=\begin{pmatrix}\alpha_{11}(x_1)&0&\cdots&0\\
0&\alpha_{22}(x_2)&\cdots&0\\
0 &0&\cdots&0\\
0&\cdots&0&\alpha_{nn}(x_n)
\end{pmatrix}.
\eea

\begin{proposition}\label{prop: non deg non rev a}
    The Hessian matrix $\mathfrak R$ for the above time-dependent non-reversible SDE  has the following form,
    \beaa 
    \mathfrak R-\frac{1}{2}\pa_t(aa^{\ts})= \mathfrak R_a + \mathfrak R_{\gamma_a}-\frac{1}{2}\pa_t(aa^{\ts}),
    \eeaa 
    where 
    \bea \label{ricci curvature}
\begin{cases}
\mathfrak R_{a,ii}&=\beta(t )\alpha_{ii}^3\partial_{x_i}\alpha_{ii}\partial_{x_i}V(x) 
 +\beta(t)\alpha_{ii}^4\partial^2_{x_ix_i}V(x)-\beta^2(t)\alpha_{ii}^3\partial_{x_ix_i}^2\alpha_{ii}
, \quad \hfill  i=1,\cdots,n;\\
\mathfrak R_{a,ij}&=\beta(t)\alpha_{ii}^2\alpha_{jj}^2\partial^2_{x_ix_j}V(x), \hfill i,j=1,\cdots,n,  i\neq j;\\
\mathfrak R_{\gamma_a,ii}&=\beta(t)\gamma_i\alpha_{ii}\pa_{x_i}\alpha_{ii}-\beta(t)\pa_{x_i}\gamma_i (\alpha_{ii} )^2,  \hfill i=1,\cdots,n;\\
\mathfrak R_{\gamma_a,ij}&=-\frac{1}{2}\beta(t)[\pa_{x_i}\gamma_j (\alpha^{}_{jj})^2+ \pa_{x_j}\gamma_i (\alpha^{}_{ii})^2 ], \hfill i,j=1,\cdots,n, i\neq j.  
\end{cases}
\eea 
\end{proposition}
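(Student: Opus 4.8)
The plan is to read off $\mathfrak R$ from its definition in Appendix \ref{appendix}, equivalently from the Information Bochner formula \cite[Theorem 3, Proposition 11]{FengLi2021}, in the situation $m=0$ and $z(t,x)\equiv 0$, and then substitute the diagonal diffusion matrix $a(t,x)=\sqrt{\beta(t)}\alpha(t,x)$ of \eqref{non degenerate a}. In this regime the Bochner tensor splits as $\mathfrak R=\mathfrak R_a+\mathfrak R_{\gamma_a}$: the ``reversible'' part $\mathfrak R_a$ is the curvature tensor built from $aa^{\ts}$ and $\pi$ alone (the tensor appearing in the Bochner formula for $\widetilde\Gamma_2$), while $\mathfrak R_{\gamma_a}$ is the correction coming from the irreversible Gamma operator. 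By Lemma \ref{lemma a: 2} together with the identification made inside the proof of Proposition \ref{prop: Fisher information decay},
\beaa
\mathfrak R_{\gamma_a}(\nabla f,\nabla f)=\frac12\la\gamma,\la\nabla f,\nabla(aa^{\ts})\nabla f\ra\ra-\la aa^{\ts}\nabla f,\nabla\gamma\nabla f\ra,
\eeaa
so the whole task reduces to evaluating two explicit tensors on $aa^{\ts}=\beta(t)\,\mathrm{diag}(\alpha_{11}^2,\dots,\alpha_{nn}^2)$. The crucial structural input is $\alpha_{ii}=\alpha_{ii}(x_i)$, which forces $\partial_{x_k}(aa^{\ts})_{ij}=2\beta(t)\alpha_{ii}(\partial_{x_i}\alpha_{ii})\,\delta_{ij}\delta_{ik}$ and kills almost all cross terms.

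First I would compute $\mathfrak R_a$. Writing $\nabla_x\log\pi=-\nabla V/\beta(t)$, the only term of the Bochner tensor that mixes two distinct coordinates is the ``$aa^{\ts}\,\mathrm{Hess}(-\log\pi)\,aa^{\ts}$'' piece, which yields $(aa^{\ts})_{ii}\,\partial^2_{x_ix_j}(-\log\pi)\,(aa^{\ts})_{jj}=\beta(t)\alpha_{ii}^2\alpha_{jj}^2\,\partial^2_{x_ix_j}V$, the claimed $\mathfrak R_{a,ij}$; on the diagonal the same piece gives $\beta(t)\alpha_{ii}^4\,\partial^2_{x_ix_i}V$. The two remaining diagonal contributions come from the terms of the Bochner tensor carrying spatial derivatives of $aa^{\ts}$: one contracts $\partial_{x_i}(aa^{\ts})_{ii}=2\beta(t)\alpha_{ii}\partial_{x_i}\alpha_{ii}$ against $aa^{\ts}\nabla\log\pi=-\alpha^2\nabla V$ and, with its $-\tfrac12$ weight, produces $\beta(t)\alpha_{ii}^3(\partial_{x_i}\alpha_{ii})(\partial_{x_i}V)$; the other involves $\partial^2_{x_ix_i}(aa^{\ts})_{ii}$ and, after collecting constants, leaves $-\beta^2(t)\alpha_{ii}^3\partial^2_{x_ix_i}\alpha_{ii}$. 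Next, for $\mathfrak R_{\gamma_a}$ I would plug the same diagonal data into the displayed identity: the first summand contributes $\sum_k\beta(t)\gamma_k\alpha_{kk}(\partial_{x_k}\alpha_{kk})(\partial_{x_k}f)^2$, i.e.\ the diagonal entry $\beta(t)\gamma_i\alpha_{ii}\partial_{x_i}\alpha_{ii}$, while $-\la aa^{\ts}\nabla f,\nabla\gamma\nabla f\ra=-\sum_{i,j}\beta(t)\alpha_{ii}^2(\partial_{x_j}\gamma_i)(\partial_{x_i}f)(\partial_{x_j}f)$ which, once symmetrized in $i,j$, gives $-\beta(t)\partial_{x_i}\gamma_i\alpha_{ii}^2$ on the diagonal and $-\tfrac12\beta(t)[\partial_{x_i}\gamma_j\alpha_{jj}^2+\partial_{x_j}\gamma_i\alpha_{ii}^2]$ off-diagonal. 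These match \eqref{ricci curvature}; since no simplification of the time-derivative term is claimed, the asserted decomposition $\mathfrak R-\tfrac12\partial_t(aa^{\ts})=\mathfrak R_a+\mathfrak R_{\gamma_a}-\tfrac12\partial_t(aa^{\ts})$ is then immediate.

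The main obstacle is bookkeeping rather than anything conceptual: one must import the appendix's formula for $\mathfrak R$ with the correct normalizations (the $\tfrac12$-factors, and the placement of $\pi$ inside the divergence operators $\div^{{\pi}}_a$), and carefully determine which of the several terms depending on $\nabla(aa^{\ts})$, $\nabla^2(aa^{\ts})$, and $\nabla\log\pi$ survive once the diagonal, coordinate-separated form of $\alpha$ is imposed. The $\delta_{ij}\delta_{ik}$ collapse makes every off-diagonal entry come from either $\mathrm{Hess}(V)$ or the symmetrized $\nabla\gamma$ term, so the only genuine risk is mis-signing or mis-weighting the two extra diagonal corrections $\beta(t)\alpha_{ii}^3(\partial_{x_i}\alpha_{ii})(\partial_{x_i}V)$ and $-\beta^2(t)\alpha_{ii}^3\partial^2_{x_ix_i}\alpha_{ii}$. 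A good consistency check is the case $\alpha\equiv\mathbb I$: then $\mathfrak R_a$ must reduce to $\beta(t)\nabla^2_{xx}V$, as used in Corollary \ref{cor: Fisher information decay}, and $\mathfrak R_{\gamma_a}$ to $-\tfrac12\beta(t)\big(\nabla\gamma+(\nabla\gamma)^{\ts}\big)$.
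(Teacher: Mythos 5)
Your proposal is correct and follows essentially the same route as the paper. The only difference is that the paper short-circuits the term-by-term bookkeeping by directly citing the diagonal-$a$ specialization already recorded in \cite[Proposition 2]{FengLi2021} and then substituting $a_{ii}=\sqrt{\beta(t)}\,\alpha_{ii}(x_i)$ and $\log\pi=-V/\beta(t)-\log Z(t)$, whereas you re-derive that specialization from the Appendix/Bochner tensor; the computations and the resulting matrices coincide, and your $\alpha\equiv\mathbb I$ sanity check ($\mathfrak R_a=\beta\nabla^2 V$, $\mathfrak R_{\gamma_a}=-\tfrac{1}{2}\beta(\nabla\gamma+(\nabla\gamma)^{\ts})$) agrees with how the result is used in Section~\ref{sec:nonreversible_example}.
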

\begin{proof}
    Following \cite{FengLi2021}[Proposition 2],  we have 
        \bea 
\begin{cases}
\mathfrak R_{a,ii}&=  -a_{ii}^3\partial_{x_i}a_{ii}\partial_{x_i}\log \pi-a_{ii}^4\partial^2_{x_ix_i}\log \pi-a_{ii}^3\partial_{x_ix_i}^2a_{ii} , \hfill  i=1,\cdots,n;\\
\mathfrak R_{a,ij}&=-a_{ii}^2a_{jj}^2\partial^2_{x_ix_j}\log \pi, \hfill i,j=1,\cdots,n,  i\neq j.\\
\mathfrak R_{\gamma_a,ii}&=\gamma_ia^{}_{ii}\pa_{x_i}a_{ii}-\pa_{x_i}\gamma_i (a^{}_{ii} )^2,  \hfill i=1,\cdots,n;\\
\mathfrak R_{\gamma_a,ij}&=-\frac{1}{2}[\pa_{x_i}\gamma_j (a^{}_{jj})^2+ \pa_{x_j}\gamma_i (a^{}_{ii})^2 ], \hfill i,j=1,\cdots,n, i\neq j.  
\end{cases}\nonumber 
\eea 
Plugging in the matrix $a(t,x)=\sqrt{\beta(t)}\alpha(x)$, we derive the desired matrix $\mathfrak R$. \qed
\end{proof}
As in the previous section, if there exists a constant $\lambda>0$, such that 
\begin{equation*}
\mathfrak{R}-\frac{1}{2}\pa_t(aa^{\ts})\succeq \lambda aa^{\ts}, 
\end{equation*}
then the Fisher information decay in Theorem \ref{thm: Fisher information decay az} holds.
\subsection{Time dependent non-reversible Langevin dynamics 
}\label{sec:nonreversible_example}
In this section, we consider a special case with $n=2$, $\alpha\equiv \mathbb I$, and $\gamma= \frac{1}{\beta(t)}\mathsf J \nabla V$, where the matrix $\mathsf J$ has the following form, for some smooth function $c(t): \mathbb R^+\rightarrow \mathbb R$
\beaa 
\mathsf J=\begin{pmatrix} 
0&\beta(t)c(t)\\
-\beta(t) c(t)&0
\end{pmatrix}, \quad i.e.\quad \gamma(t,x)=\begin{pmatrix}c(t)\partial_{x_2}V(x)\\
-c(t)\partial_{x_1}V(x)
\end{pmatrix}.
\eeaa 
It is easy to check that $\nabla\cdot(\pi(t,x)\gamma(t,x))=0$ (e.g.: see \eqref{pi gamma =0} below). Applying Proposition \ref{prop: non deg non rev a}, we have 
\bea
\mathfrak R 
&=& \beta(t)\begin{pmatrix}
     \partial_{x_1x_1}V-c(t) \partial_{x_1x_2}V & \partial_{x_1x_2}V-c(t)\frac{1}{2} ( - \partial_{x_1x_1}V+ \partial_{x_2x_2}V ) \\
    \partial_{x_2x_1}V-c(t)\frac{1}{2} ( - \partial_{x_1x_1}V+ \partial_{x_2x_2}V )  & \partial_{x_2x_2}V +c(t)\partial_{x_2x_1}V
\end{pmatrix}\nonumber\\
&=:&\beta(t) \mathsf B(t,x).
\eea
Comparing with the Corollary \ref{cor: Fisher information decay} and Corollary \ref{corolllary:KL_convergence_rate}, the irreversible vector field $\gamma(t,x)$ only changes the matrix $\mathfrak R$, but does not change the estimate of $\mathrm A(t)$. If the smallest eigenvalue of $\mathsf B(t,x)$ is bigger than the smallest eigenvalue of $\nabla^2_{xx}V$ for a proper choice of the function $c(t)$, the convergence of stochastic dynamics \eqref{VSDE} can be faster than the underdamped Langevin dynamics \eqref{VSDE-gradient}.

\noindent
\textbf{Variable matrices $\mathsf J$.}
We also study a case with the variable coefficient anti-symmetric vector field. Consider a two-dimensional stochastic differential equation:
\begin{equation}
    \label{eq:new_J}
dX_t = (-\nabla V(X_t)- \mathsf J(t,X_t) \nabla V(X_t)-\beta(t)\nabla\cdot \mathsf J(t,X_t)) dt +\sqrt{2\beta(t)}dB_t,
\end{equation}
where we define 
\beaa 
\mathsf J=\begin{pmatrix} 
0&c(t,x)\\
- c(t,x)&0
\end{pmatrix},
\eeaa 
and 
\beaa
\gamma(t,x)&=&aa^{\ts}\nabla\log \pi -b +\beta(t)(\frac{\partial}{\partial x_j} (\alpha \alpha^{\ts})_{ij})_{i=1}^n\\
&=& \begin{pmatrix}c(t, x)\partial_{x_2}V(x)\\
-c(t,x)\partial_{x_1}V(x)
\end{pmatrix}+\beta(t)\begin{pmatrix}
    -\partial_{x_2}c(t,x)\\
    \partial_{x_1}c(t,x)
\end{pmatrix}.
\eeaa 
Here $\pi(t,x)=\frac{1}{Z(t)}e^{-\frac{V(x)}{\beta(t)}}$. We also have the fact that $\nabla\cdot(\pi \gamma)=\frac{1}{Z}\nabla\cdot(e^{-V}\gamma)=0$, since
\bea \label{pi gamma =0}
\nabla\cdot (\pi \gamma)&=& \nabla\cdot \Big(\pi \begin{pmatrix}c(t, x)\partial_{x_2}V(x)\\
-c(t,x)\partial_{x_1}V(x)
\end{pmatrix}+\pi \beta \begin{pmatrix}
    -\partial_{x_2}c(t,x)\\
    \partial_{x_1}c(t,x)
\end{pmatrix}\Big) \nonumber \\
&=& \langle \nabla\pi, \begin{pmatrix}c(t, x)\partial_{x_2}V(x)\\
-c(t,x)\partial_{x_1}V(x)
\end{pmatrix}+\beta\begin{pmatrix}
    -\partial_{x_2}c(t,x)\\
    \partial_{x_1}c(t,x)
\end{pmatrix}\rangle \nonumber\\
&&+\pi \nabla\cdot \Big( \begin{pmatrix}c(t, x)\partial_{x_2}V(x)\\
-c(t,x)\partial_{x_1}V(x)
\end{pmatrix}+\beta\begin{pmatrix}
    -\partial_{x_2}c(t,x)\\
    \partial_{x_1}c(t,x)
\end{pmatrix}\Big) \nonumber\\
&=& -\frac{\pi}{\beta}  \Big\langle \begin{pmatrix}
    \partial_{x_1} V\\
    \partial_{x_2} V
\end{pmatrix} , \begin{pmatrix}c(t, x)\partial_{x_2}V(x)\\
-c(t,x)\partial_{x_1}V(x)
\end{pmatrix}+\beta\begin{pmatrix}
    -\partial_{x_2}c(t,x) \\
    \partial_{x_1}c(t,x)
\end{pmatrix} \Big\rangle \nonumber\\
&&+\pi  \partial_{x_1}[c(t,x)\partial_{x_2}V-\beta\partial_{x_2}c(t,x)]+\pi \partial_{x_2}[-c(t,x)\partial_{x_1}V+\beta\partial_{x_1}c(t,x) ] \nonumber\\
&=& -\frac{\pi}{\beta} (c \partial_{x_1}V\partial_{x_2}V-c \partial_{x_1}V\partial_{x_2}V)+\pi (\partial_{x_1}V \partial_{x_2}c -\partial_{x_2}V\partial_{x_1}c) \nonumber\\
&&+ \pi[ \partial_{x_1}c\partial_{x_2}V+c\partial_{x_1x_2}V -\beta\partial_{x_1x_2}c-\partial_{x_2}c\partial_{x_1}V-c\partial_{x_1x_2}V+\beta\partial_{x_1x_2}c]\nonumber \\
&=0. 
\eea 
For the matrix $\mathfrak R$, with $\gamma_1= c(t,x)\partial_{x_2}V-\beta(t)\partial_{x_2}c(t,x)$, and $\gamma_2=-c(t,x)\partial_{x_1}V+\beta(t)\partial_{x_1}c(t,x)$, we have
\bea
\mathfrak R 
=&&\beta\begin{pmatrix}
    \partial_{x_1x_1}V & \partial_{x_1x_2}V\\
    \partial_{x_2x_1}V & \partial_{x_2x_2}V 
\end{pmatrix}-\beta\begin{pmatrix}
    \partial_{x_1}\gamma_1 &\frac{1}{2} ( \partial_{x_1}\gamma_2+ \partial_{x_2}\gamma_1 )  \\
\frac{1}{2} (  \partial_{x_1}\gamma_2+ \partial_{x_2}\gamma_1 )      &  \partial_{x_2}\gamma_2
\end{pmatrix}\nonumber \\
=&&\quad\beta\begin{pmatrix}
    \partial_{x_1x_1}V & \partial_{x_1x_2}V\\
    \partial_{x_2x_1}V & \partial_{x_2x_2}V 
\end{pmatrix}\nonumber\\
&&-\beta\begin{pmatrix}
    \partial_{x_1}c\partial_{x_2}V+c\partial_{x_1x_2}V-\beta\partial_{x_1x_2}c &\mathfrak R_{\gamma,12} \\
 \mathfrak R_{\gamma,12}    & -\partial_{x_2}c\partial_{x_1}V-c\partial_{x_2x_1}V+\beta\partial_{x_2x_1}c
\end{pmatrix} \nonumber,
\eea
where $\mathfrak R_{\gamma,12}=\frac{1}{2}[c(\partial_{x_2x_2}V-\partial_{x_1x_1}V)+\beta(-\partial_{x_2x_2}c+\partial_{x_1x_1}c) +\partial_{x_2}c\partial_{x_2}V-\partial_{x_1}c\partial_{x_1}V] $. 

\begin{figure}[H]
    \centering
\includegraphics[width=0.7\textwidth]{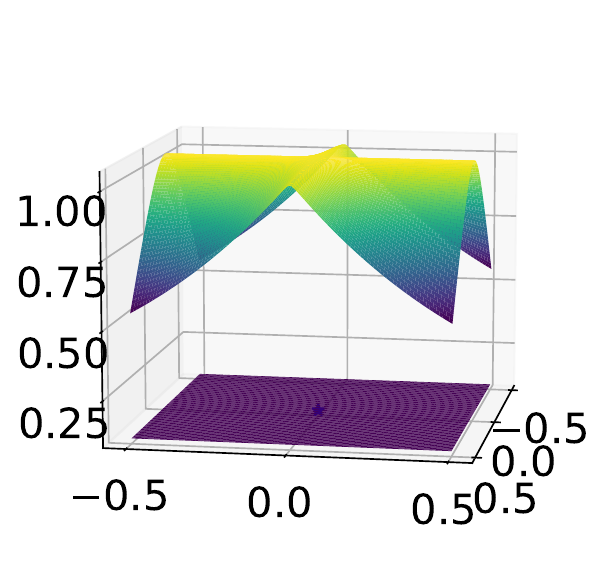}
    \caption{Eigenvalue comparison between $V(x)$ and $\mathfrak R(x)$ for $x\in [-0.5,0.5]\times[-0.5,0.5] $. The parameters are chosen as in Example \ref{ex:new J}. The yellow surface represents the smallest eigenvalue of $\mathfrak R(x)$ and the purple surface represents the smallest eigenvalue of $V(x)$. The global minimum of $V$ is marked with a blue asterisk. As shown in the figure, the smallest eigenvalue of $\mathfrak R$ is larger than that of $V$ near the global minimum.  }
    \label{fig:eig_comp}
\end{figure}
\begin{example}\label{ex:new J}
Let us consider an example where $V$ is a two dimensional quadratic form with 
$$
\nabla_{xx}^2 V = \begin{pmatrix}
    a&0\\
    0& b
\end{pmatrix},
$$
such that $a>b>0$. This implies that $\nabla_{xx}^2 V$ is positive definite. We assume that the minimum of V is at $(0,0)$. Let $c(t,x)=c(x)$ be another quadratic form such that it has the same global minimum as $V$. Denote by $c''_{ij} = \partial_{x_ix_j} c$. We now consider the neighbourhood near the global minimum, so that all first-order partial derivatives of $V$ can be neglected, and $c \approx 0$.  Then the matrix $\mathfrak R$ is approximated by 
$$
\mathfrak R \approx \beta\begin{pmatrix}
    a+\beta c''_{12} & -\frac{1}{2}\beta(c''_{11}-c''_{22}) \\
    -\frac{1}{2}\beta(c''_{11}-c''_{22}) & b-\beta c''_{12} 
\end{pmatrix}\,.
$$
There are many choices of $c$ to make the smallest eigenvalues of $\mathfrak R$ larger than that of $V$. For instance, take $c''_{11}=c''_{22}=0$, $c''_{12} =(b-a)/2\beta(t) $. In this case, the smallest eigenvalue is $(a+b)/2$ whereas the smallest eigenvalue of $V$ is $b$. A visualization is shown Fig.~\ref{fig:eig_comp}  when $a=2$, $b=0.1$, $\beta=1$. Now let us consider $\beta(t) = \frac{1}{t_0 + t}$ for $t_0 = 1$. We use the Euler-Maruyama scheme to run  \eqref{eq:langevin} and \eqref{eq:new_J} with a step size of $dt = 5\times 10^{-5}$ for $3*10^5$ iterations. We use $10^4$ particles initially sampled from a standard Gaussian distribution for our comparison. The result is demonstrated in Fig.~\ref{fig:new_J}. We observe that equation \eqref{eq:new_J} yields a faster convergence towards the global minimum than equation \eqref{eq:langevin}.
\end{example}

\begin{figure}[H]
    \centering
    \includegraphics[width=0.7\textwidth]{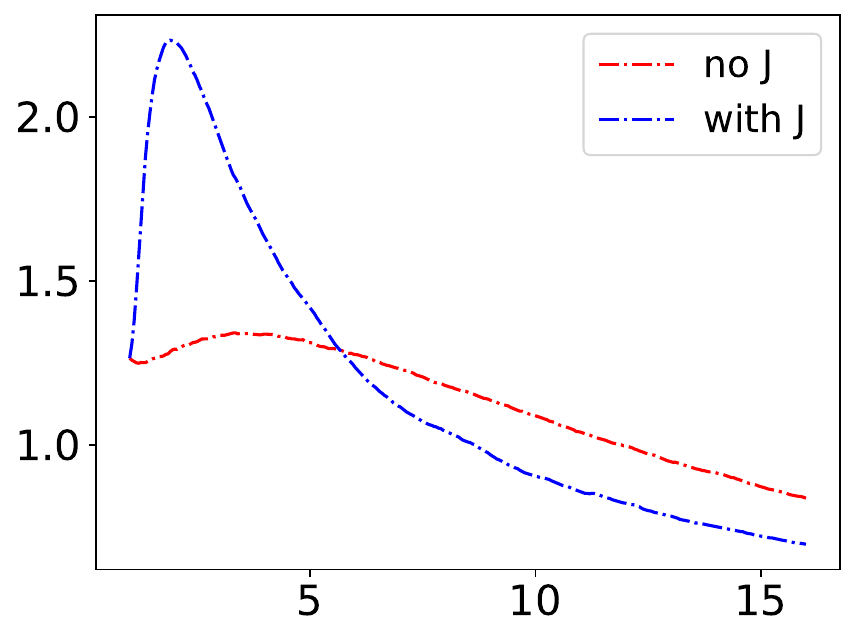}
    \caption{Convergence comparison between \eqref{eq:langevin} and \eqref{eq:new_J} in Example \ref{ex:new J}. $x$-axis represents time. $y$-axis represents the average distance of the particles to the global minimum using the two SDEs.  }
    \label{fig:new_J}
\end{figure}

\section{Example III: underdamped Langevin dynamics}\label{sec6}
In this section, we consider an underdamped Langevin dynamics with variable diffusion coefficients:
\begin{equation}\label{SDE}
\left\{\begin{aligned}
       dx_t=&v_t dt \\
   dv_t=&(- r(t,x_t) v_t-\nabla_xV(x_t))dt+\sqrt{2 r(t,x_t)}dB_t,
\end{aligned}\right.
\end{equation}
where $n=m=1$, $d=n+m=2$, $X_t=(x_t, v_t)\in\mathbb{R}^2$ is a two dimensional stochastic process, $V\in\mathbb C^2(\mathbb{R}^1)$ is 
a Lipschitz potential function with assumption $\int_{\mathbb{R}^1} e^{-V(x)}dx<+\infty$, $B_t$ is a standard Brownian motion in $\mathbb{R}$, and $r:\mathbb{R}_+\times\mathbb R^2\rightarrow \mathbb{R}_+$ is a positive smooth Lipschitz function. Indeed, the reference measure $\pi(t,x,v)=\pi(x,v)$ is the invariant measure, defined as 
\beaa 
\pi(x,v)=\frac{1}{Z}e^{-H(x,v)},\q H(x,v)=\frac{v^2}{2}+V(x),
\eeaa  
where $Z=\int_{\mathbb{R}^2} e^{-H(x,v)}dxdv<+\infty$ is a normalization constant. Following the definition of diffusion matrix $a$, the vector field $\gamma$ and the correction term $\mathcal{R}$, we have 
\bea 
a=\begin{pmatrix}
0 \\ \sqrt{r(t,x)}
\end{pmatrix},\quad
\gamma = \begin{pmatrix}
    -v\\
    \nabla V(x)
\end{pmatrix}, \quad\text{and}\quad \mathcal R(t,x,\pi)=0,
\eea 
since $\pa_t\pi(x,v)=0$, and $\nabla\cdot(\pi\gamma)=0$. 

Consider a time-dependent vector field  $z=\begin{pmatrix} z_1(t,x) \\ z_2(t,x)\end{pmatrix}$. We have the following proposition. The derivation follows similar studies in the time-independent case as shown in \cite{FengLi2021}. We skip the details here.
\begin{proposition}\label{thm: constant z}
For the time-dependent underdamped Langevin dynamics, the time-dependent Hessian matrix function $\mathfrak{R}(t,x): \mathbb R_+\times \mathbb R^2\rightarrow \mathbb{R}^{2\times 2}$ has the following form, 
\beaa 
\mathfrak{R}&=&  \mathfrak R_a+\mathfrak R_z+\mathfrak R_{\pi}-\mathfrak M_{\Lambda}+\mathfrak R_{\gamma_a}+
\mathfrak R_{\gamma_z},
\eeaa
where $a_{21}=\sqrt{r}$, and 
\beaa 
\mathfrak R_a&=&\begin{pmatrix}
0&0\\
0& -\frac{\partial^2  \log\pi }{\partial v^2} |a_{21}|^4
\end{pmatrix},\quad \mathfrak{R}_{\pi}
	=\begin{pmatrix}
0&0\\
0& C_{\pi}
\end{pmatrix}, \\
\mathfrak R_z&=&\frac{1}{2}\Big[  \begin{pmatrix}
0\\
-z^{\ts}_1\nabla((a_{21})^2 \frac{\partial  \log\pi }{\partial v})
\end{pmatrix} z^{\ts}_1+z_1\begin{pmatrix}
0& -z^{\ts}_1\nabla((a_{21})^2 \frac{\partial  \log\pi }{\partial v})
\end{pmatrix} \Big],\\ 
\mathfrak R_{\gamma_a}&=&{\frac{1}{2}}\gamma_1\nabla_1(aa^{\ts})-\frac{1}{2}\Big[ (\nabla \gamma)^{\ts}aa^{\ts}+aa^{\ts}\nabla\gamma \Big],\\ 
\mathfrak R_{\gamma_z}&=&{\frac{1}{2}}\gamma_1\nabla_1(zz^{\ts})-\frac{1}{2}\Big[(\nabla \gamma)^{\ts}zz^{\ts}+zz^{\ts}\nabla\gamma \Big],
\quad \mathfrak M_{\Lambda}=\frac{1}{(a_{21})^2} \mathsf K^{\ts}(aa^{\ts}+zz^{\ts})^{-1}\mathsf K,
\eeaa
with
\beaa\label{C pi}
C_\pi&=&2\Big[z^{\ts}_{1} z^{\ts}_{1} \nabla^2 a_{21}  a_{21}+(z^{\ts}_{1}\nabla a_{21} )^2+ (z^{\ts}_1\nabla\log\pi) [ z^{\ts}_{ 1}\nabla a_{21} a_{21}]\Big],\\
\mathsf K&=&\begin{pmatrix} \mathsf 0& 2z_1^2\pa_x[a_{21}]a_{21}-\frac{1}{2}\beta\gamma_1(a_{21})^2\\
-z_1^2\pa_x[a_{21}]a_{21}+\frac{1}{2}\beta \gamma_1(a_{21})^2&z_1z_2\pa_x[a_{21}]a_{21}\end{pmatrix}.
\eeaa 
If there exists a constant $\lambda>0$, such that 
\begin{equation*}
\mathfrak{R}-\frac{1}{2}\pa_t(aa^{\ts}+zz^{\ts})\succeq \lambda (aa^{\ts}+zz^{\ts}), 
\end{equation*}
then the Fisher information decay in Theorem \ref{thm: Fisher information decay az} holds.
\end{proposition}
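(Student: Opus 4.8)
The plan is to obtain Proposition~\ref{thm: constant z} by specializing the general machinery of Sections~\ref{sec2}--\ref{sec3} to the underdamped data $a=(0,\sqrt{r(t,x)})^{\ts}$, $\gamma=(-v,\nabla V(x))^{\ts}$, $\pi\propto e^{-H}$, and then quoting Theorem~\ref{thm: Fisher information decay az}. I would organize the argument in three stages.

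\textbf{Step 1 (structural hypotheses and the correction term).} First I would fix a complementary field $z=(z_1(t,x),z_2(t,x))^{\ts}$ with $z_1\neq0$ and record that all hypotheses under which the Information Bochner identity of \cite[Theorem~3]{FengLi2021} is valid hold here: the full-rank condition \eqref{full rank condition} follows from $\det(aa^{\ts}+zz^{\ts})=z_1^2\,r>0$; the H\"ormander condition holds because $[\mathbf A_1,\mathbf A_0+\partial_t]$ recovers the $\partial_x$ direction; and the bracket condition \eqref{condition: bochner} is verified directly from the triangular structure of $a$ together with an admissible choice of $z$, the time dependence entering only through the smooth coefficient $r(t,x)$ and introducing no new obstruction relative to \cite{FengLi2021}. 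Since $\partial_t\pi=0$ and $\nabla\cdot(\pi\gamma)=0$, the correction term of \eqref{correction R} satisfies $\mathcal R(t,x,\pi)=0$, which simplifies all the formulas in Section~\ref{sec3}.

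\textbf{Step 2 (reversible curvature).} Next I would evaluate $\widetilde\Gamma_2(f,f)$ and $\widetilde\Gamma_2^{z,\pi}(f,f)$, $f=\log\frac{p}{\pi}$, from Definition~\ref{defn:tilde gamma 2 znew}, using $a_{11}=0$, $a_{21}=\sqrt{r}$ and $\nabla\log\pi=-\nabla H$. Because $a$ has only the single nonzero entry $a_{21}$, the bulk of $\widetilde\Gamma_2$ collapses and only $\mathfrak R_a=\mathrm{diag}\bigl(0,-\partial_v^2\log\pi\,|a_{21}|^4\bigr)$ survives; the $x$-derivatives of $a_{21}$ feed into $\widetilde\Gamma_2^{z,\pi}$ through the extra divergence pair $\div^{\pi}_z(\Gamma_{1,\nabla(aa^{\ts})}(f,f))-\div^{\pi}_a(\Gamma_{1,\nabla(zz^{\ts})}(f,f))$ and, after integration against $p$, produce the matrix $\mathfrak R_z$ together with the scalar $C_\pi$. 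Completing the square in the Frobenius term $\|\mathfrak{Hess}_{\beta} f\|_{\mathrm{F}}^{2}$ of the Bochner identity with $\beta=0$, exactly as in \cite{FengLi2021}, isolates the negative semidefinite correction $-\mathfrak M_{\Lambda}=-\frac{1}{(a_{21})^2}\mathsf K^{\ts}(aa^{\ts}+zz^{\ts})^{-1}\mathsf K$; the matrix $\mathsf K$ is read off as the collection of cross-terms that cannot be absorbed into a perfect square, which gives the stated formula.

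\textbf{Step 3 (irreversible curvature and conclusion).} For the $\gamma$-contribution I would invoke Lemma~\ref{lemma a: 2} and Lemma~\ref{lemma z: 2}. As $\nabla\cdot(\pi\gamma)=0$, the $\frac{\nabla\cdot(\pi\gamma)}{\pi}\Gamma_1$ and $\frac{\nabla\cdot(\pi\gamma)}{\pi}\Gamma_1^z$ terms drop out, so $\mathfrak R_{\gamma_a}$ and $\mathfrak R_{\gamma_z}$ reduce to the symmetrized expressions $\tfrac12\gamma_1\nabla_1(aa^{\ts})-\tfrac12[(\nabla\gamma)^{\ts}aa^{\ts}+aa^{\ts}\nabla\gamma]$ (and the analogue with $z$ in place of $a$) of the time-homogeneous theory; substituting $\gamma=(-v,\nabla V)^{\ts}$ yields the asserted entries. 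Combining Steps~2 and~3 identifies the Hessian matrix appearing in Proposition~\ref{prop: Fisher information decay} as $\mathfrak R=\mathfrak R_a+\mathfrak R_z+\mathfrak R_\pi-\mathfrak M_\Lambda+\mathfrak R_{\gamma_a}+\mathfrak R_{\gamma_z}$. Finally, the assumed matrix inequality $\mathfrak R-\tfrac12\partial_t(aa^{\ts}+zz^{\ts})\succeq\lambda(aa^{\ts}+zz^{\ts})$ is exactly assumption \eqref{assumption: a z tensor} with $\lambda(t)\equiv\lambda$, so Theorem~\ref{thm: Fisher information decay az} applies verbatim and delivers the Fisher information decay. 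I expect the only genuine difficulty to be in Step~2: keeping precise track of the $x$-derivatives of $a_{21}$ inside $\widetilde\Gamma_2^{z,\pi}$ and carrying out the completion of the square that produces $\mathsf K$ and $\mathfrak M_\Lambda$; the rest is a routine specialization of \cite{FengLi2021} once $\mathcal R=0$ has been used.
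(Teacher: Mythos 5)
Your proposal is correct and follows the same route the paper implies: it says ``the derivation follows similar studies in the time-independent case as shown in [FengLi2021]; we skip the details,'' which is exactly what you carry out by verifying the structural hypotheses, specializing the appendix Hessian matrix to $a=(0,\sqrt{r})^{\ts}$ and $\gamma=(-v,\nabla V)^{\ts}$ with $\mathcal R=0$, identifying the $-\mathfrak M_\Lambda$ term from the completion of squares, and then quoting Theorem~\ref{thm: Fisher information decay az}. Your only imprecision is attributing $-\mathfrak M_\Lambda$ to ``completing the square in $\|\mathfrak{Hess}_\beta f\|_F^2$'' alone; it actually arises from the $-\Lambda_1^{\ts}\Lambda_1-\Lambda_2^{\ts}\Lambda_2+\mathsf D^{\ts}\mathsf D+\mathsf E^{\ts}\mathsf E$ block already built into Definition~\ref{def: curvature sum}, but this is cosmetic and the overall argument is sound.
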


In the following, we consider a special case where we choose $r(t,x)=r(t)$, and $z=\begin{pmatrix}
	z_1\\
	z_2
\end{pmatrix}$, for some constants $z_1,z_2\in\hR$. In this case, the matrix $\mathfrak R(t,x)$ is simplified into the following form, 
\beaa 
\mathfrak R= \mathfrak R_a+\mathfrak R_z+\mathfrak R_{\gamma_a}+\mathfrak R_{\gamma_z},
\eeaa 
where we have 
\beaa 
\mathfrak R_a&=&\begin{pmatrix}
	0& 0\\
	0& (r(t))^2
\end{pmatrix},\quad \mathfrak R_z=r(t)\begin{pmatrix}
	0 & \frac{z_1z_2}{2}\\
	\frac{z_1z_2}{2}& z_2^2
\end{pmatrix},\\
\mathfrak R_{\gamma_a}&=&r(t) \begin{pmatrix}
	0&\frac{1}{2}\\
	\frac{1}{2}& 0
\end{pmatrix},\quad \mathfrak R_{\gamma_z}=\begin{pmatrix}
	z_1z_2&\frac{1}{2}( z_2^2-z_1^2\nabla_{xx}^2V)\\
	\frac{1}{2}( z_2^2-z_1^2\nabla_{xx}^2V) & -z_1z_2\nabla^2_{xx}V
\end{pmatrix}.
\eeaa

\begin{proposition}[Sufficient conditions]\label{prop: underdamped} In the above example,
\beaa 
&&\mathfrak R-\frac{1}{2}\partial_t (aa^{\ts})\\
&=& \begin{pmatrix}
	 z_1z_2 & \frac{1}{2}[r(t)+r(t)z_1z_2+z_2^2-z_1^2\nabla^2_{xx}V(x) ]\\
	 \frac{1}{2}[r(t)+r(t)z_1z_2+z_2^2-z_1^2\nabla^2_{xx}V(x) ] & (r(t))^2+r(t)z_2^2-z_1z_2 \nabla^2_{xx}V(x)-\frac{1}{2}\partial_t r(t)
\end{pmatrix}.
\eeaa 
Assume that $0<\underline{\lambda} \le \pa_{xx}^2V\le \overline{\lambda}$, and there exist constants $z_2\in(0,\frac{r(t)+\sqrt{r(t)^2+4r(t)}}{2})$, for all $t\ge t_0$, such that $\underline{\lambda}$, $\overline{\lambda}$ satisfy the following conditions:
\bea \label{1d condition simple}
-\overline{\lambda}^2+[2(r(1+z_2)-z_2^2)] \underline{\lambda}-[(1-z_2)r+z_2^2]^2-2z_2\partial_tr>0,\q r^2+rz_2^2-\overline{\lambda} z_2-\frac{1}{2}\partial_t r>0.
\eea 
Then there exists a function $\lambda(t)>0$, for $t>t_0$, such that 
\bea\label{eq:lambda}
\mathfrak R-\frac{1}{2}\partial_t(aa^{\ts})\succeq \lambda (aa^{\ts}+zz^{\ts}).
\eea
\end{proposition}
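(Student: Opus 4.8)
The plan is as follows. For the first identity I would just substitute into Proposition~\ref{thm: constant z}: here $a=(0,\sqrt{r(t)})^{\ts}$, so $aa^{\ts}=\mathrm{diag}(0,r(t))$ and $\tfrac12\partial_t(aa^{\ts})=\mathrm{diag}(0,\tfrac12\partial_t r(t))$, and adding the four explicit $2\times2$ blocks $\mathfrak R_a,\mathfrak R_z,\mathfrak R_{\gamma_a},\mathfrak R_{\gamma_z}$ written just above the proposition and subtracting $\tfrac12\partial_t(aa^{\ts})$ produces the claimed matrix. From here I normalize $z_1=1$ (the general case merely reinstates powers of $z_1$) and abbreviate the resulting symmetric matrix by $\mathsf M=\mathsf M(t,x)$, with entries $\mathsf M_{11}=z_2$, $\mathsf M_{12}=\tfrac12(r+rz_2+z_2^2-H)$, $\mathsf M_{22}=r^2+rz_2^2-z_2H-\tfrac12\partial_t r$, writing $r=r(t)$, $\partial_t r=\partial_t r(t)$ and $H=\nabla^2_{xx}V(x)\in[\underline\lambda,\overline\lambda]$.

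The reduction for \eqref{eq:lambda} is this: since $aa^{\ts}+zz^{\ts}=\begin{pmatrix}1 & z_2\\ z_2 & r+z_2^2\end{pmatrix}$ has determinant $r>0$ it is positive definite, so if I can show $\mathsf M(t,x)$ is positive definite with $\lambda_{\min}(\mathsf M(t,x))\ge\mu(t)>0$ uniformly in $x$, then $\mathsf M\succeq\mu(t)I\succeq\big(\mu(t)/\|aa^{\ts}+zz^{\ts}\|\big)(aa^{\ts}+zz^{\ts})$ and $\lambda(t):=\mu(t)/\|aa^{\ts}+zz^{\ts}\|$ works. A symmetric $2\times2$ matrix is positive definite iff its $(1,1)$-entry and its determinant are positive; here $\mathsf M_{11}=z_2>0$ by hypothesis, and $\mathsf M_{22}>0$ for all $H\in[\underline\lambda,\overline\lambda]$ is exactly the second inequality in \eqref{1d condition simple} (it is also automatic from $\mathsf M_{11}>0$ and $\det\mathsf M>0$). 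Moreover, once $\det\mathsf M\ge\delta(t)>0$ uniformly in $x$, the uniform bound $\lambda_{\min}(\mathsf M)\ge\det\mathsf M/\mathrm{tr}\,\mathsf M\ge\delta(t)/\big(\sup_{H}\mathrm{tr}\,\mathsf M\big)>0$ follows since $\mathrm{tr}\,\mathsf M$ is bounded above on the compact range of $H$. So everything reduces to a uniform positive lower bound on $\det\mathsf M$.

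This last estimate is the core of the proof. One computes
\[
\det\mathsf M=z_2\Big(r^2+rz_2^2-z_2H-\tfrac12\partial_t r\Big)-\tfrac14\big(r(1+z_2)+z_2^2-H\big)^2=:g(H).
\]
As a function of $H$, $g$ is a downward parabola whose linear coefficient equals $\tfrac12\big(r(1+z_2)-z_2^2\big)$, which is strictly positive precisely because the hypothesis places $z_2$ below $\tfrac{r+\sqrt{r^2+4r}}{2}$, the positive root of $z_2^2-rz_2-r=0$. Hence on $H\in[\underline\lambda,\overline\lambda]$ I would estimate $g(H)\ge-\tfrac14\overline\lambda^2+\tfrac12\big(r(1+z_2)-z_2^2\big)\underline\lambda+C_0$, where $C_0=z_2\big(r^2+rz_2^2-\tfrac12\partial_t r\big)-\tfrac14\big(r(1+z_2)+z_2^2\big)^2$. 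Writing $A=r(1+z_2)+z_2^2$ and $B=r(1-z_2)+z_2^2$, the identity $A^2-B^2=(A-B)(A+B)$ gives $4z_2r^2+4rz_2^3-A^2=-B^2$, so $4C_0=-\big(r(1-z_2)+z_2^2\big)^2-2z_2\partial_t r$; therefore $4g(H)$ is bounded below by exactly the left-hand side of the first inequality in \eqref{1d condition simple}, which is positive by assumption. This gives the desired $\delta(t)>0$ and finishes the proof.

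I expect the main obstacle to be precisely this determinant estimate: one has to recognize that the pointwise positive-definiteness of the $2\times2$ curvature matrix, after the downward-parabola bound in $H$ together with the algebraic identity $4z_2r^2+4rz_2^3-A^2=-B^2$, collapses exactly onto the two scalar inequalities of \eqref{1d condition simple}, and one must be careful that the lower bound $\delta(t)$ is uniform in $x$ so that a single $\lambda(t)>0$ serves for all $x$. The substitution giving $\mathsf M$, the $2\times2$ Sylvester criterion, and passing from $\lambda_{\min}(\mathsf M)\ge\mu(t)$ to \eqref{eq:lambda} are routine.
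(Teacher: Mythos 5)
Your proposal is correct and follows essentially the same route as the paper: write out the explicit $2\times2$ matrix, normalize $z_1=1$, and reduce everything to the Sylvester criterion for positive definiteness — with the determinant estimate, via the downward parabola in $H=\nabla^2_{xx}V$ and the algebraic identity $A^2-B^2=4rz_2(r+z_2^2)$, collapsing precisely onto the two scalar inequalities in \eqref{1d condition simple}, and the constraint $z_2<\tfrac{r+\sqrt{r^2+4r}}{2}$ enforcing the needed positivity of the linear coefficient $r(1+z_2)-z_2^2$. What you add beyond the paper's proof is the explicit passage from $\det\mathsf M>0$ to the existence of $\lambda(t)>0$ with $\mathsf M\succeq\lambda(t)(aa^{\ts}+zz^{\ts})$: the paper simply asserts that showing $\det>0$ (and $\mathsf M_{22}>0$) "is sufficient" and stops once the inequalities are verified, while you supply the missing compactness step, namely that $\lambda_{\min}(\mathsf M)\ge\det\mathsf M/\mathrm{tr}\,\mathsf M$ is bounded uniformly below in $x$ because $H$ ranges over the compact interval $[\underline\lambda,\overline\lambda]$, and that $aa^{\ts}+zz^{\ts}$ is positive definite with determinant $r(t)>0$. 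This is a genuine and worthwhile clarification — the paper's "sufficient to prove $\det(\mathfrak R)>0$" elides exactly this uniformity — but it does not change the underlying argument.
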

\begin{proof}
 For notation convenience, we take $r=r(t)$.  It is sufficient to prove $\mathsf{det}(\mathfrak R)>0$ for $z_1z_2>0$ and  $r^2+rz_2^2-\partial^2_{xx} V z_1z_2-\frac{1}{2}\partial_t r>0$, which is equivalent to 
\beaa 
z_1z_2(r^2+rz_2^2-\partial^2_{xx} V z_1z_2-\frac{1}{2}\partial_t r)-\frac{1}{4}(rz_1z_2+z_2^2-\partial^2_{xx}V z_1^2+r)^2>0.
\eeaa 
It is equivalent to the following inequality:
\bea\label{1d condition general} 
-z_1^4(\pa_{xx}^2V)^2+[2(r(1+z_1z_2)-z_2^2)z_1^2] \pa_{xx}^2V-[(1-z_1z_2)r+z_2^2]^2-2z_1z_2\partial_t r>0.
\eea 
According to the assumption of $\pa_{xx}^2V$, it is sufficient to prove the following conditions: 
\bea\label{1d condition}
\begin{cases}
& z_1z_2>0,\q r^2+rz_2^2-\overline{\lambda} z_1z_2-\frac{1}{2}\partial_t r>0,\q   (r(1+z_1z_2)-z_2^2)>0;\\
&-z_1^4\overline{\lambda}^2+[2(r(1+z_1z_2)-z_2^2)z_1^2] \underline{\lambda}-[(1-z_1z_2)r+z_2^2]^2-2z_1z_2\partial_t r>0.
\end{cases}
\eea

Let $z_1=1$, then \eqref{1d condition simple} is equivalent to \eqref{1d condition}. We complete the proof.\qed 
\end{proof}
The next corollary estimates $\lambda$ in \eqref{eq:lambda} under some specific choices of parameters. 
\begin{corollary}
{If $z_2 = z_1 = 1$,  $\overline{\lambda}>\frac{1}{2\underline{\lambda}}+\frac{\underline{\lambda}}{2}+1$, $\beta \geq \overline{\lambda}/2$, we have $\mathfrak R-\frac{1}{2}\partial_t(aa^{\ts})\succeq 0$ as $t\to \infty$. Suppose further that $\beta = \overline{\lambda}/2$, and $\overline{\lambda}\geq\underline\lambda+2$, then we have $\lambda = \mathcal{O}(\frac{2\underline{\lambda}}{\overline{\lambda}}-\frac{1}{\overline{\lambda}^2})$. }
\end{corollary}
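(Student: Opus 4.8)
The plan is to reduce the matrix inequality to a scalar problem in the single parameter $u:=\nabla^2_{xx}V(x)\in[\underline\lambda,\overline\lambda]$ and then read off the generalized eigenvalue. First I would specialize the matrix of Proposition~\ref{prop: underdamped} to $z_1=z_2=1$ and to the regime $t\to\infty$, in which $r(t)\to\beta$ and $\partial_t r(t)\to0$, so that
\[
M(u):=\mathfrak R-\tfrac12\partial_t(aa^{\ts})=\begin{pmatrix}1 & \beta+\tfrac12-\tfrac u2\\[1mm] \beta+\tfrac12-\tfrac u2 & \beta^2+\beta-u\end{pmatrix},\qquad N:=aa^{\ts}+zz^{\ts}=\begin{pmatrix}1&1\\[1mm]1&1+\beta\end{pmatrix}.
\]
A direct expansion gives $\det M(u)=-\tfrac14u^2+(\beta-\tfrac12)u-\tfrac14$, which is concave in $u$; moreover $M_{11}N_{22}+M_{22}N_{11}-2M_{12}N_{12}=\beta^2$ and $\det N=\beta$, so the characteristic equation $\det(M(u)-\lambda N)=0$ collapses to $\beta\lambda^2-\beta^2\lambda+\det M(u)=0$.

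For the first assertion I would apply the $2\times2$ positive-semidefiniteness test to $M(u)$: since $M_{11}=1>0$, it suffices that $\det M(u)\ge0$ for every $u\in[\underline\lambda,\overline\lambda]$. Because $\det M(u)$ is concave in $u$, this reduces to the two endpoints. After rearrangement the endpoint $u=\underline\lambda$ is $2(2\beta-1)\ge\underline\lambda+\underline\lambda^{-1}$, which under $\beta\ge\overline\lambda/2$ is precisely the hypothesis $\overline\lambda>\tfrac1{2\underline\lambda}+\tfrac{\underline\lambda}2+1$; the endpoint $u=\overline\lambda$ is $2(2\beta-1)\ge\overline\lambda+\overline\lambda^{-1}$, a condition involving only $\overline\lambda$ and $\beta$ that is satisfied in this regime. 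Hence $M(u)\succeq0$ for all admissible $u$, i.e. $\mathfrak R-\tfrac12\partial_t(aa^{\ts})\succeq0$ as $t\to\infty$.

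For the second assertion I would set $\beta=\overline\lambda/2$ and use that $M(u)\succeq\lambda N$ is equivalent to $\lambda\le\lambda_-(u):=\tfrac\beta2\bigl(1-\sqrt{1-4\det M(u)/\beta^3}\bigr)$, the smaller root of the quadratic above (both roots are nonnegative since $\det M(u)\ge0$ and $\det N>0$). As $\lambda_-$ is increasing in $\det M(u)$, the best admissible constant is $\min_{u\in[\underline\lambda,\overline\lambda]}\lambda_-(u)$, attained where $\det M$ is smallest. Now $\det M(u)$ is a downward parabola with vertex $u^\ast=2\beta-1=\overline\lambda-1$, and $\overline\lambda\ge\underline\lambda+2$ places $u^\ast$ inside $[\underline\lambda,\overline\lambda]$ with $\underline\lambda$ the farther endpoint, so the minimum sits at $u=\underline\lambda$. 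Taking $\lambda=\lambda_-(\underline\lambda)$, one verifies $\det(M(u)-\lambda N)=\det M(u)-\det M(\underline\lambda)\ge0$ for all $u$ and $(M(u)-\lambda N)_{11}=1-\lambda>0$, so \eqref{eq:lambda} holds. Finally, expanding through $1-\sqrt{1-\epsilon}=\tfrac\epsilon2+O(\epsilon^2)$ with $\epsilon=4\det M(\underline\lambda)/\beta^3$ and $\det M(\underline\lambda)=\tfrac{\overline\lambda\underline\lambda}2-\tfrac{\underline\lambda^2}4-\tfrac{\underline\lambda}2-\tfrac14$ gives $\lambda=\dfrac{4\det M(\underline\lambda)}{\overline\lambda^2}+O(\overline\lambda^{-3})=\dfrac{2\underline\lambda}{\overline\lambda}-\dfrac{(\underline\lambda+1)^2}{\overline\lambda^2}+O(\overline\lambda^{-3})$, i.e. $\lambda$ of the stated order $\mathcal O\!\bigl(\tfrac{2\underline\lambda}{\overline\lambda}-\tfrac1{\overline\lambda^2}\bigr)$.

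The main obstacle is bookkeeping rather than any single estimate: one must make the endpoint reduction of $\det M(u)\ge0$ rigorous through concavity in $u$, check that $\beta=\overline\lambda/2$ together with $\overline\lambda\ge\underline\lambda+2$ really pins the minimizing curvature to the left endpoint (this is where that extra assumption enters, and it is what makes the identity $\det(M(u)-\lambda N)=\det M(u)-\det M(\underline\lambda)$ yield a valid bound for every $u$), and then carry the $\overline\lambda\to\infty$ expansion of $\lambda_-(\underline\lambda)$ far enough to isolate the $\tfrac{2\underline\lambda}{\overline\lambda}$ leading term and the $\overline\lambda^{-2}$ correction, while confirming $\lambda<1$ so that the $(1,1)$ entry of $M-\lambda N$ stays positive and the discarded terms are genuinely of lower order.
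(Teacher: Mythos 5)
Your proposal is correct and follows the same overall strategy as the paper: pass to $t\to\infty$ so that $r\to\beta$ and $\partial_t r\to 0$, set $u=\nabla_{xx}^2 V$, and reduce the matrix inequalities to scalar statements about the concave parabola $\det M(u)=-\tfrac14u^2+(\beta-\tfrac12)u-\tfrac14$ checked at the endpoints $u=\underline\lambda,\overline\lambda$. The paper's proof (with $\beta=\overline\lambda/2$ throughout) writes out the three scalar positivity conditions for $M-\lambda N$ and manipulates the resulting quadratic $f(\underline\lambda,\lambda)$ directly; your proposal does the same thing but through the slicker identity $\det(M(u)-\lambda N)=\det M(u)-\beta^2\lambda+\beta\lambda^2$, exploiting that $M_{11}N_{22}+M_{22}N_{11}-2M_{12}N_{12}=\beta^2$ is independent of $u$. (Indeed the paper's $f(u,\lambda)$ is just $4\det(M(u)-\lambda N)$.) This buys you the closed-form root $\lambda_-(u)=\tfrac\beta2\bigl(1-\sqrt{1-4\det M(u)/\beta^3}\bigr)$ and the one-line verification $\det(M(u)-\lambda_-(\underline\lambda)N)=\det M(u)-\det M(\underline\lambda)\ge0$, and your expansion isolates a sharper $\overline\lambda^{-2}$ coefficient, $(\underline\lambda+1)^2$, of which the paper's ``$1$'' is the $\underline\lambda/\overline\lambda\ll1$ limit.

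Two small cautions, both shared with the paper so I do not count them against you: the endpoint $u=\overline\lambda$ requires $\overline\lambda\ge 1+\sqrt2$, which is not in general a consequence of $\overline\lambda>\tfrac1{2\underline\lambda}+\tfrac{\underline\lambda}2+1$ alone (it fails near $\underline\lambda=1$); and the claim $\lambda_-(\underline\lambda)\le1$, needed for the $(1,1)$-entry of $M-\lambda N$ to stay nonnegative, is asserted rather than proved in both your proposal and the published argument. Worth filling in if you formalize the proof, but they are gaps in the original corollary as much as in your reconstruction.
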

{\begin{proof}
    Since $r(t)=\beta+C/\log t$, we have that $r(t) \to \beta$ and $\partial_t r \to 0$ as $t\to \infty$. Denote by $u(x) =\pa_{xx}^2V(x)$ and let $\beta = \overline{\lambda}/2$. We directly compute 
    \begin{align}
        \mathsf{det}(\mathfrak R-\frac{1}{2}\partial_t(aa^{\ts})) &= \frac{\overline{\lambda}^2}{4} + \frac{\overline{\lambda}}{2}-u(x)-\frac{1}{4}(\overline{\lambda}+1-u(x))^2\nonumber \\
        &= \frac{\overline{\lambda}u(x)}{2}-\frac{u(x)}{2}-\frac{u(x)^2}{4}-\frac{1}{4}\,,
    \end{align}
    which is a quadratic function in $u(x)$. One can check that since $0<\underline{\lambda}\leq u(x) \leq \overline{\lambda}$ for all $x$, we have $ \mathsf{det}(\mathfrak R-\frac{1}{2}\partial_t(aa^{\ts}))>0$ as long as 
    $$
    \overline{\lambda}>\max\{\frac{1}{2\underline{\lambda}}+\frac{\underline{\lambda}}{2}+1,1+\sqrt{2} \} = \frac{1}{2\underline{\lambda}}+\frac{\underline{\lambda}}{2}+1\,. 
    $$
    Now let $\beta = \frac{\overline{\lambda}}{2}$. We want to find the largest $\lambda$, such that 
    $$
\mathfrak R-\frac{1}{2}\partial_t(aa^{\ts})-\lambda (aa^{\ts}+zz^{\ts}) = \begin{pmatrix}
    1-\lambda & \frac{1}{2}(\overline{\lambda}+1-u(x))-\lambda \\
    \frac{1}{2}(\overline{\lambda}+1-u(x))-\lambda & \frac{\overline{\lambda}^2}{4}+ \frac{\overline{\lambda}}{2}-u(x)-(1+\frac{\overline{\lambda}}{2})\lambda 
\end{pmatrix}\succeq 0,
    $$
    as $t\to \infty$. This translates to 
    \begin{align}
        1-\lambda &\geq 0\label{condition1} \\
        \frac{\overline{\lambda}^2}{4}+ \frac{\overline{\lambda}}{2}-u(x)-(1+\frac{\overline{\lambda}}{2})\lambda &\geq0\label{condition2} \\
        -1+2(\overline{\lambda}-1)u(x)-u(x)^2-\overline{\lambda}(\overline \lambda - 2\lambda)\lambda &\geq0 \label{condition3}
    \end{align}
    Define $f(u,\lambda) = -1+2(\overline{\lambda}-1)u(x)-u(x)^2-\overline{\lambda}(\overline \lambda - 2\lambda)\lambda$. It is clear that when $\lambda$ is fixed, $f$ is quadratic in $u$ and peaks at $u=\overline{\lambda}-1$. We also have that by definition of $u(x)$, we have $\underline \lambda \leq u(x) \leq \overline \lambda$ for all $x$.  Therefore,
    $$
    \min_u f(u,\lambda) = \min \{f(\overline{\lambda},\lambda),f(\underline\lambda,\lambda),f(\overline{\lambda}-1,\lambda)\}\,.
    $$
    When $\overline{\lambda}\geq\underline\lambda+2$, the above implies $\min_u f(u,\lambda) = f(\underline\lambda,\lambda)$. Thus \eqref{condition3} is satisfied as long as $f(\underline\lambda,\lambda)\geq 0$. We would like to maximize $\lambda$ subject to the constraints $f(\underline\lambda,\lambda)\geq 0$ together with \eqref{condition1} and \eqref{condition2}. From $\eqref{condition2}$ we have that 
    $$\lambda \leq \frac{\overline{ \lambda}}{2} - \frac{\underline \lambda}{1+\overline{ \lambda}/2}\,.$$
    Using our assumption $\overline{\lambda}\geq\underline\lambda+2$ we get that 
    $$
    \frac{\overline{ \lambda}}{2} - \frac{\underline \lambda}{1+\overline{ \lambda}/2} > 1\,.
    $$
    Therefore, \eqref{condition1} and \eqref{condition2} together imply $\lambda \leq 1$. Observe that $f(\underline \lambda,\lambda)$ is a quadratic function of $\lambda$, which produces two roots. It is straightforward to check that the larger root of $f$ is greater than 1. Hence, we conclude that $\lambda$ cannot be larger than the smaller root of $f$. We have 
    \begin{align}
        \lambda_{\rm max}  &= \frac{\overline{\lambda}}{4}- \frac{1}{4}\sqrt{\frac{8}{\overline{\lambda}}+\overline{\lambda}^2 + 16\frac{\underline\lambda}{\overline{\lambda}}-16\underline\lambda+8\frac{\underline\lambda^2}{\overline{\lambda}}}\\
        &\approx \frac{\overline{\lambda}}{4}-\frac{1}{4}\sqrt{\frac{8}{\overline{\lambda}}+\overline{\lambda}^2 -16\underline\lambda}\nonumber\\
        &\approx  \frac{2\underline{\lambda}}{\overline{\lambda}}-\frac{1}{\overline{\lambda}^2}\,,\nonumber
    \end{align}
    where our approximation holds when  $\underline\lambda/\overline{\lambda}\ll 1$. \qed
\end{proof}}

\subsection{Numerics}
We plot the convergence of \eqref{SDE} in Fig.~\ref{fig:UL_case1_strongly_convex} for strongly convex functions and in Fig.~\ref{fig:UL_case1_nonconvex} for non-convex functions. We have also plotted the KL divergence for $x$ variable only in Fig.~\ref{fig:UL_case1_strongly_convex_x} and Fig.~\ref{fig:UL_case1_nonconvex_x}. We used the same experiment setting as described in Section \ref{sec4}. In all of our numerical experiments, we observe that the KL divergence converges to 0. Comparing Fig.~\ref{fig:strongly_convex} with Fig.~\ref{fig:UL_case1_strongly_convex_x}, we observe that the convergence speed of the underdamped Langevin dynamics \eqref{SDE} has a greater dependence on the constant than overdamped Langevin dynamics \eqref{eq:langevin} does (recall that there is a constant $C$ in $\beta(t)$ in \eqref{eq:langevin} and a constant $\beta$ in $r(t)$ in \eqref{SDE}). If the constant is chosen appropriately, the underdamped Langevin dynamics could converge much faster to the invariant measure than the overdamped Langevin dynamics. In both Fig.~\ref{fig:UL_case1_strongly_convex_x} and Fig.~\ref{fig:UL_case1_nonconvex_x}, we observe oscillations of the error, which is a typical phenomenon in accelerated convex optimization methods \cite{Attouch1, Attouch2,Xinzhe}. Designing the optimal constant $\beta$ in $r(t)$ with fast convergence speed is a delicate issue that is left for future studies.
\begin{figure}[H]
     \centering
     \begin{subfigure}{0.32\textwidth}
         \centering
         \includegraphics[width=\textwidth]{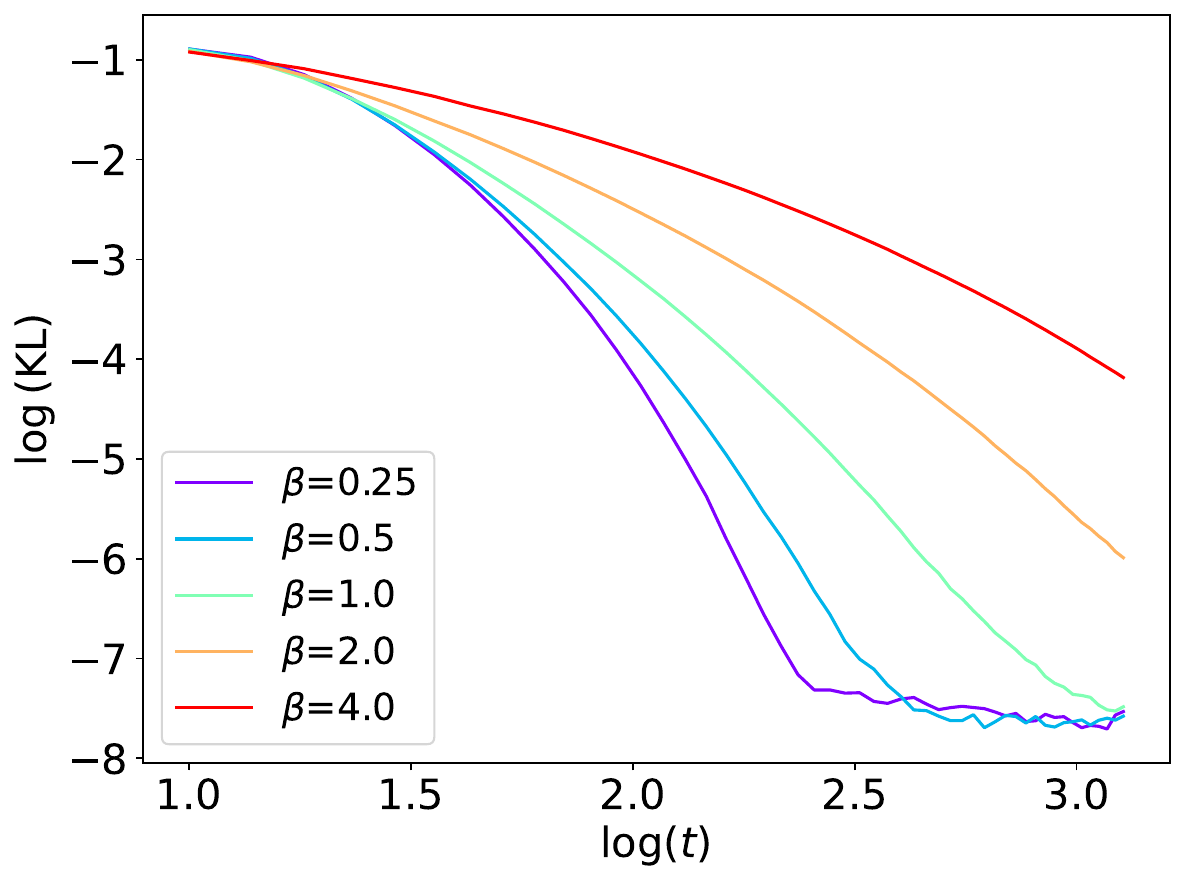}
         \caption{$V(x)=\frac{(x-1)^2}{8}$}
         \label{fig:UL_case1_x_square}
     \end{subfigure}
     \begin{subfigure}{0.32\textwidth}
         \centering
         \includegraphics[width=\textwidth]{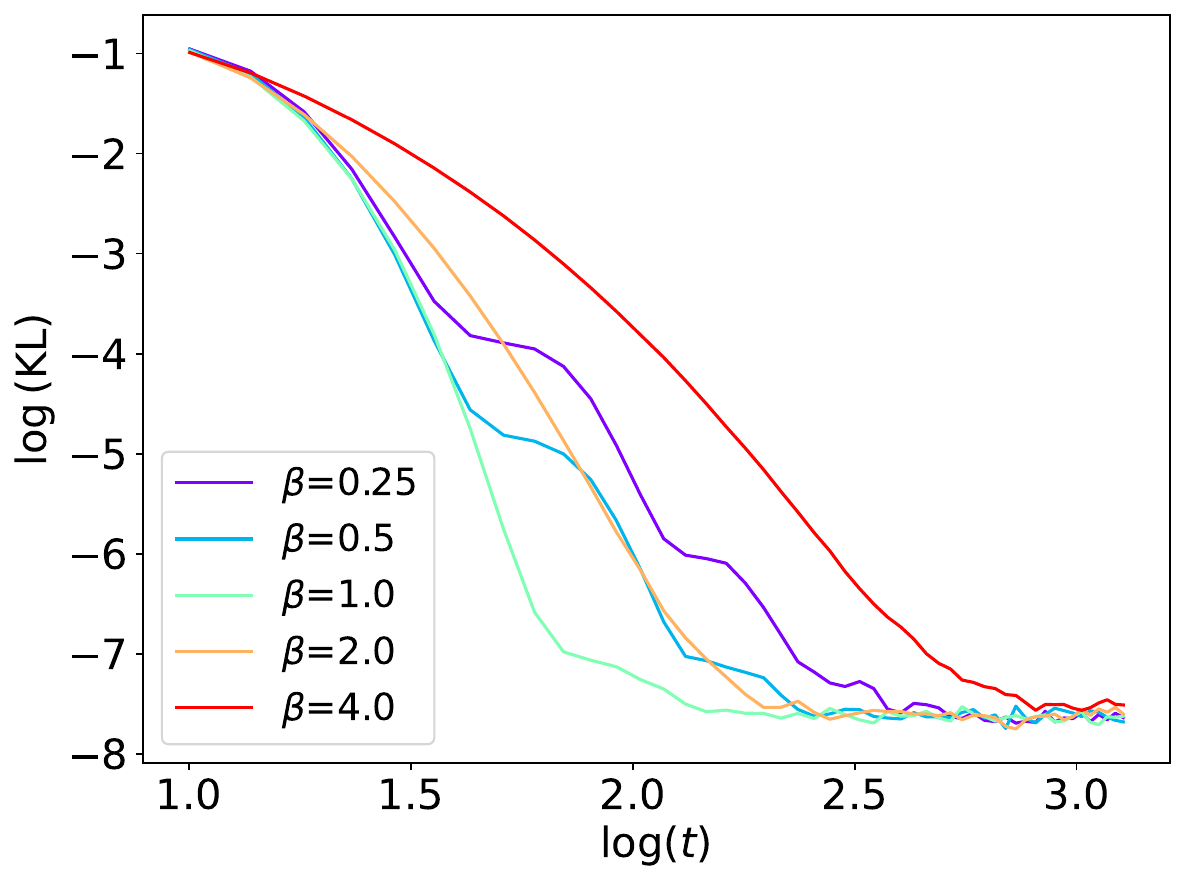}
         \caption{$V(x)=\frac{(x+1)^2}{2}-\frac{\cos(x)}{2}$}
         \label{fig:UL_case1_x_square_cos}
     \end{subfigure}

        \caption{Convergence rate of two strongly convex functions in one-dimension for \eqref{SDE} with $r(t) = \beta + 1/\log(t)$, where we measure the KL divergence in both $x$ and $v$.  }
        \label{fig:UL_case1_strongly_convex}
\end{figure} 
\begin{figure}[H]
     \centering
     \begin{subfigure}{0.32\textwidth}
         \centering
         \includegraphics[width=\textwidth]{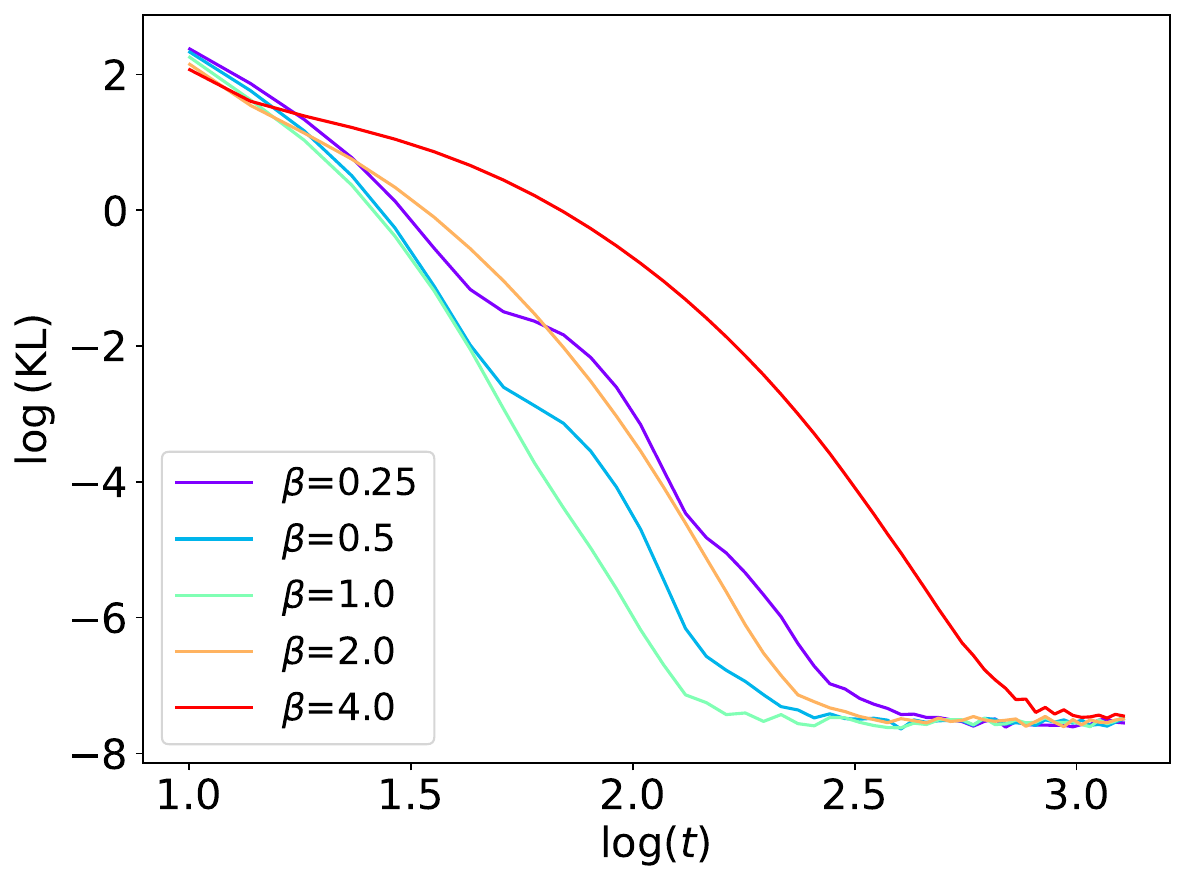}
         \caption{$V(x)=\frac{(x+2)^4}{4}-\frac{x^2}{2}+\frac{x}{8}$}
         \label{fig:UL_case1_x_421}
     \end{subfigure}
     \begin{subfigure}{0.32\textwidth}
         \centering
         \includegraphics[width=\textwidth]{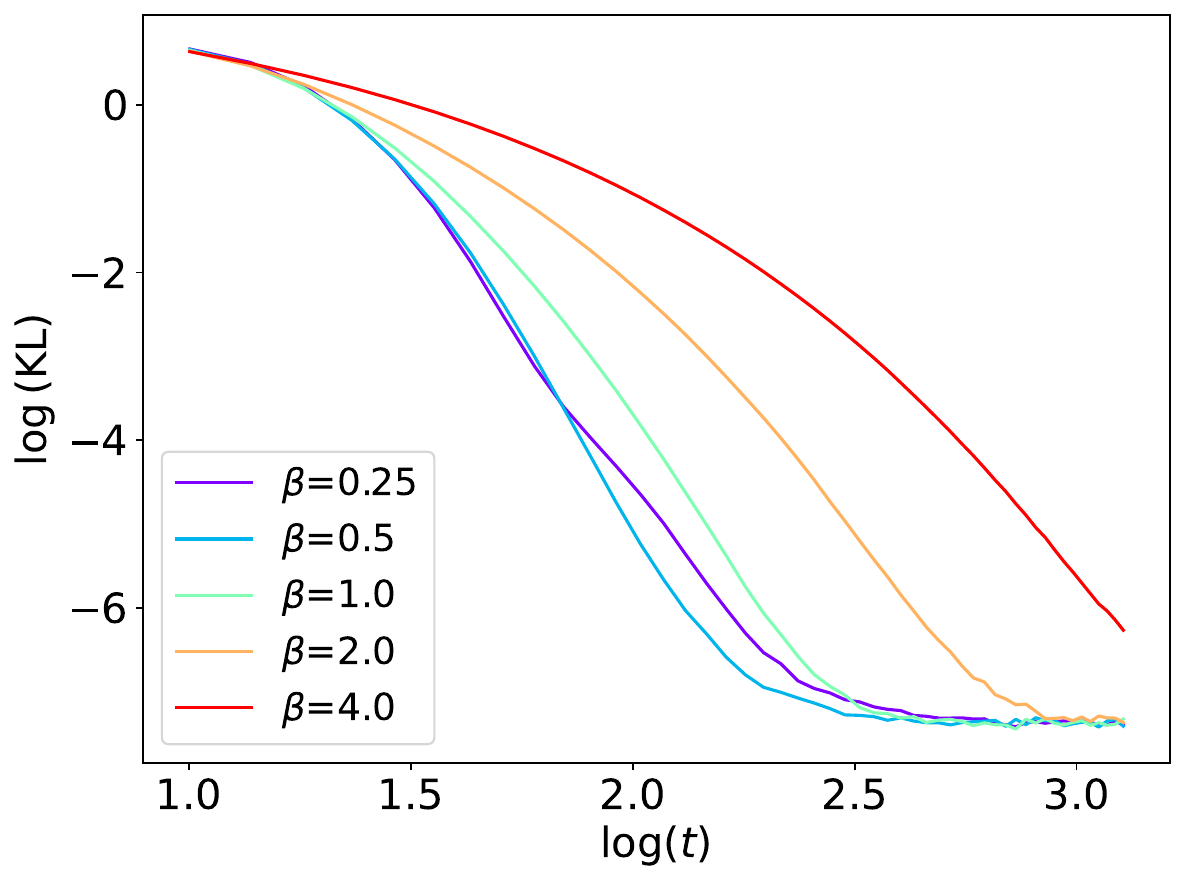}
         \caption{$V(x)=\frac{(x-2)^2}{2}-\frac{\sin(5x)}{2}$}
         \label{fig:UL_case1_x_square_sin}
     \end{subfigure}

        \caption{Convergence rate of two non-convex functions in one-dimension for \eqref{SDE} with $r(t) = \beta + 1/\log(t)$, where we measure the KL divergence in both $x$ and $v$ variables. }
        \label{fig:UL_case1_nonconvex}
\end{figure} 

\begin{figure}[H]
     \centering
     \begin{subfigure}{0.32\textwidth}
         \centering
         \includegraphics[width=\textwidth]{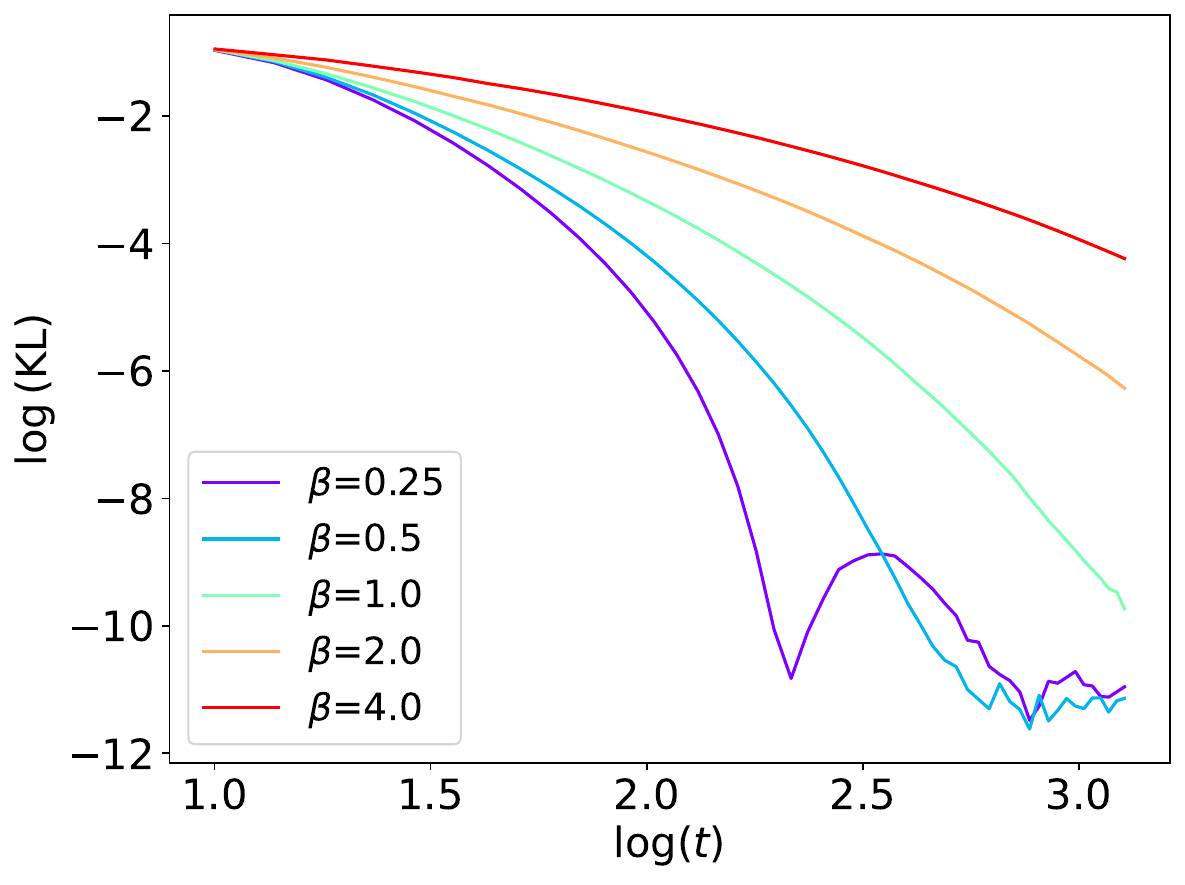}
         \caption{$V(x)=\frac{(x-1)^2}{8}$}
         \label{fig:UL_case2_x_square_x}
     \end{subfigure}
     \begin{subfigure}{0.32\textwidth}
         \centering
         \includegraphics[width=\textwidth]{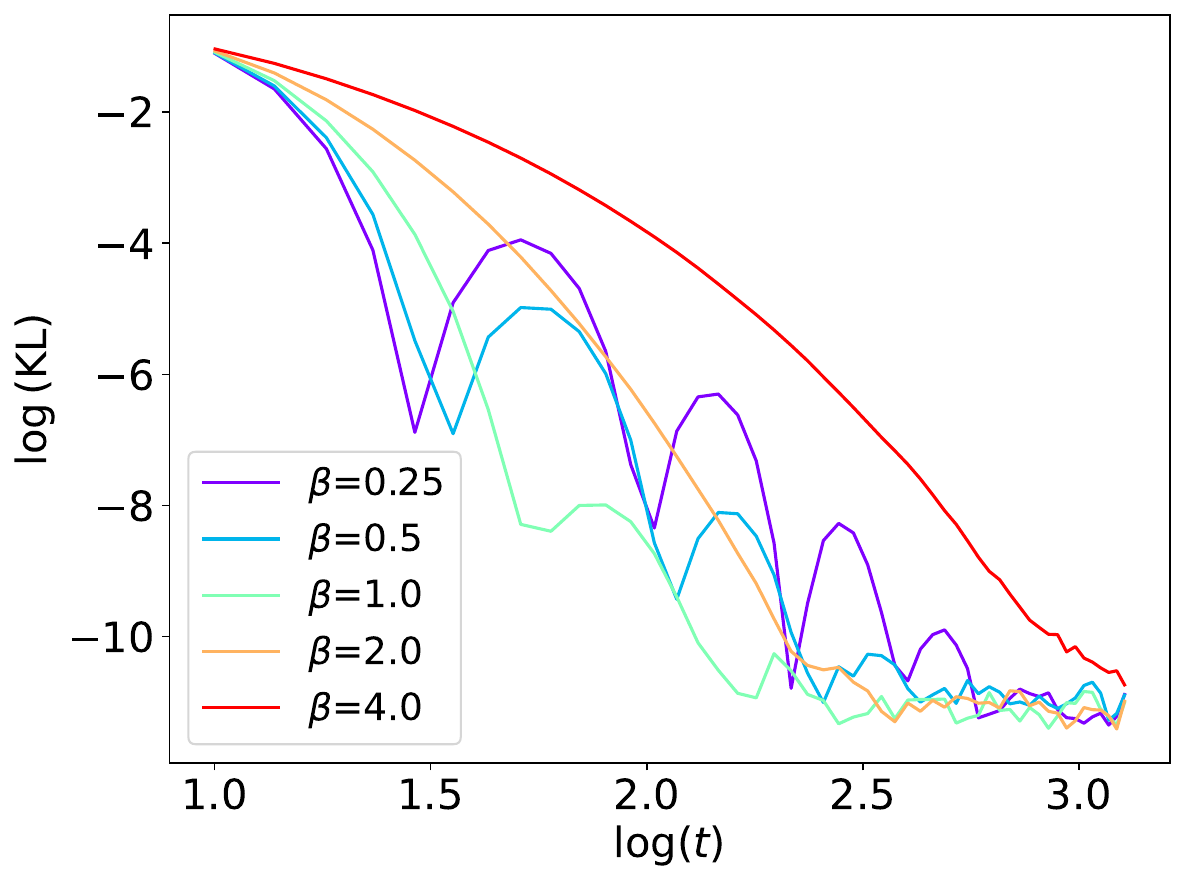}
         \caption{$V(x)=\frac{(x+1)^2}{2}-\frac{\cos(x)}{2}$}
         \label{fig:UL_case1_x_square_cos_x}
     \end{subfigure}

        \caption{Convergence rate of two strongly convex functions in one-dimension for \eqref{SDE}, where we only measure the KL divergence in the $x$ variable.  }
        \label{fig:UL_case1_strongly_convex_x}
\end{figure} 
\begin{figure}[H]
     \centering
     \begin{subfigure}{0.32\textwidth}
         \centering
         \includegraphics[width=\textwidth]{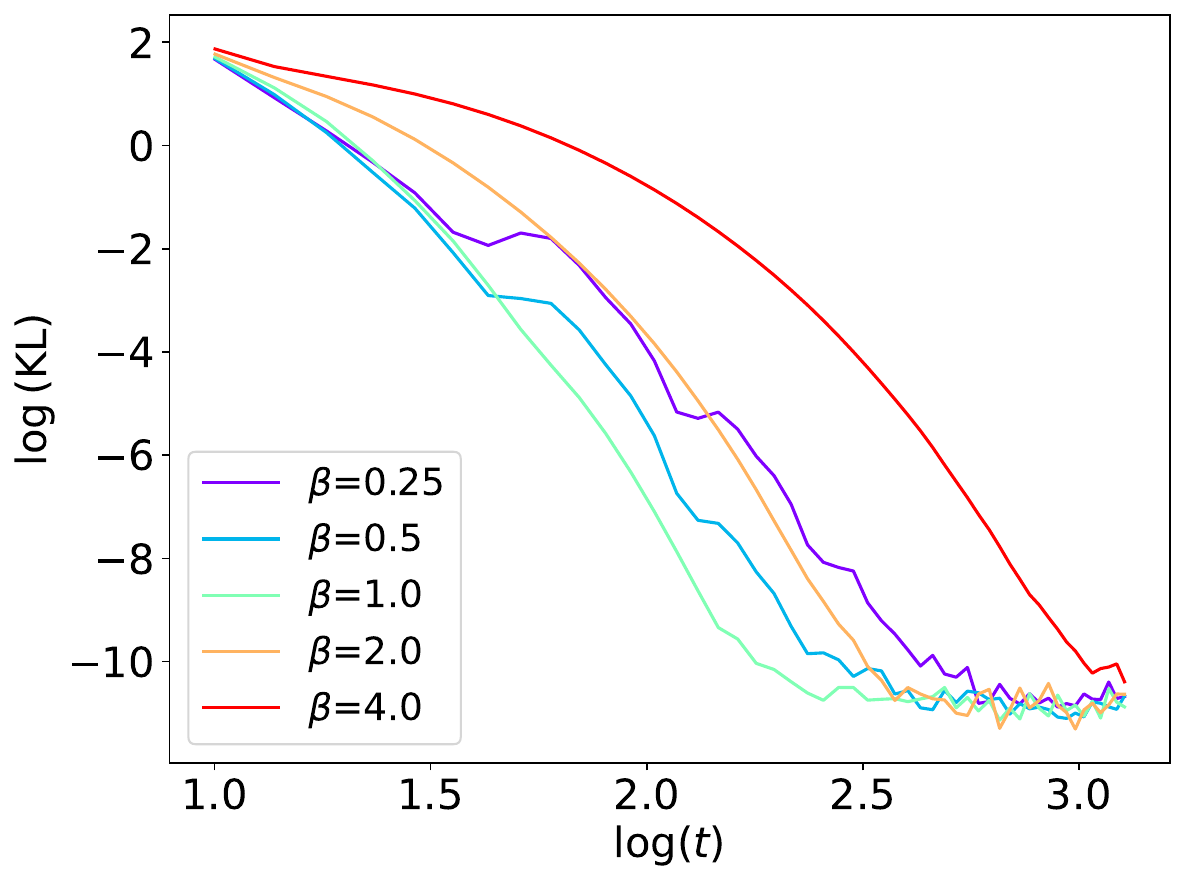}
         \caption{$V(x)=\frac{(x+2)^4}{4}-\frac{x^2}{2}+\frac{x}{8}$}
         \label{fig:UL_case1_x_421_x}
     \end{subfigure}
     \begin{subfigure}{0.32\textwidth}
         \centering
         \includegraphics[width=\textwidth]{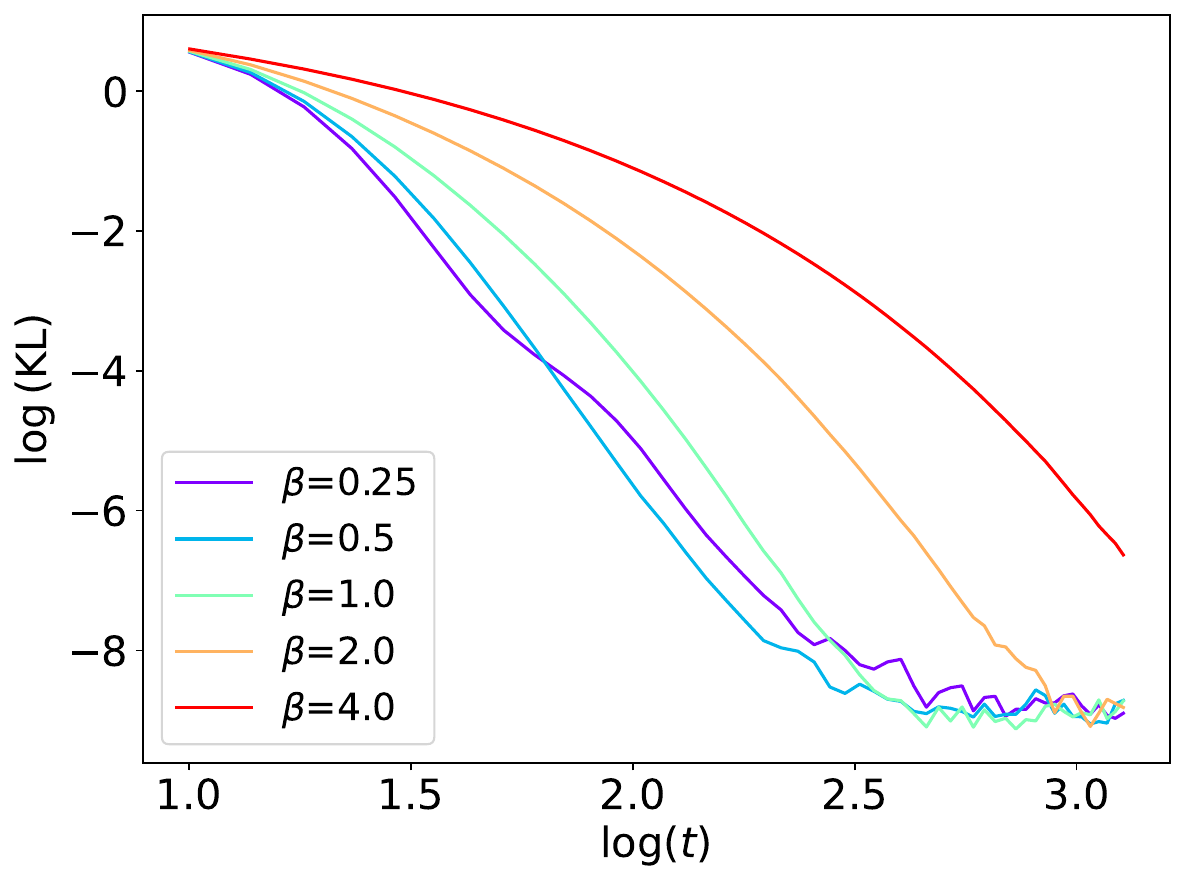}
         \caption{$V(x)=\frac{(x-2)^2}{2}-\frac{\sin(5x)}{2}$}
         \label{fig:UL_case1_x_square_sin_x}
     \end{subfigure}

        \caption{Convergence rate of two non-convex functions in one-dimension for \eqref{SDE}, where we only measure the KL divergence in the $x$ variable.}
        \label{fig:UL_case1_nonconvex_x}
\end{figure} 

\section{Discussion}
This paper studies the convergence analysis of time-dependent stochastic dynamics. We obtain a time-dependent Hessian matrix condition, which characterizes the convergence behavior of stochastic dynamics in terms of generalized Fisher information functionals.
Examples of convergence speeds are shown, including over-damped, irreversible drift and degenerate diffusion, and underdamped Langevin dynamics. We also present several numerical experiments to verify the current convergence analysis of general stochastic dynamics. 

In future work, we shall investigate the ``optimal'' choice of time-dependent matrix function $a$ and vector field $\gamma$ to find the global minimizer of a non-convex function $V$. Here, the ``optimal'' is in the sense of fast convergence speed towards the global minimizer. 
However, as we see in this paper, the convergence analysis for stochastic algorithms is more delicate than their deterministic counterparts. This requires us to estimate the general Hessian matrix, a.k.a. Ricci curvature lower bound, from both diffusion matrices $a$ and non-gradient vector from $\gamma$. They depend on the second derivatives of coefficients in stochastic dynamics. The other practical issue is the estimation of step sizes in the Euler-Maruyama scheme \eqref{EM}. The related discrete-time convergence analysis of stochastic algorithms is left in future studies. 


\section*{Appendix}\label{appendix}
The time-independent version of the Hessian matrix is first introduced in  \cite{FengLi2021}[Definition 1]. For completeness of this paper, we introduce the time-dependent version of it for matrices $a(t,x)$ and $z(t,x)$, and we take the interpolation parameter $\beta=0$ for \cite[Definition 1]{FengLi2021}, since we do not always have $\nabla\cdot(\pi(t,x)\gamma(t,x))=0$, which can be seen in Lemma \ref{lemma a: 2} and Lemma \eqref{lemma z: 2}. This is a major difference compared to \cite[Proposition 9]{FengLi2021}.
\begin{definition}[Hessian matrix]\label{def: curvature sum}
Let matrices $a(t,x)$ and $z(t,x)$ satisfy the H\"ormander like condtion, and conditions \eqref{full rank condition}, \eqref{condition: bochner}. We define a bilinear form associated with SDE \eqref{SDE setting}, and matrices $a,z$ as below, for a smooth vector field $\mathsf{U}\in C^{\infty}(\hR^{n+m};\hR^{n+m})$, 
\bea\label{defn: curvature tensor}
\mathfrak{R}(\mathsf{U},\mathsf{U})
&=& (\mathfrak R_a+\mathfrak R_z+\mathfrak R_{\pi}+ \mathfrak R_{\gamma_a}+\mathfrak R_{\gamma_z})(\mathsf{U},\mathsf{U})-\Lambda_1^{\ts}\Lambda_1-\Lambda_2^{\ts}\Lambda_2+\mathsf D^{\ts}\mathsf D+\mathsf E^{\ts}\mathsf E.
\eea 
We define $\mathfrak{R}(t, x): \mathbb R_+\times \mathbb R^{n+m}\rightarrow \mathbb{R}^{(n+m)\times (n+m)}$ as the corresponding time dependent matrix function such that
\bea 
\mathsf U^\ts\mathfrak{R}(x)\mathsf U=\mathfrak{R}(\mathsf U, \mathsf U),
\eea 
for all vector fields  $\mathsf U$. The bilinear forms in \eqref{defn: curvature tensor}
 are defined as below.  
\bea\label{tensor a}\mathfrak R_{a}(\mathsf{U},\mathsf{U})&=& \sum_{i,k=1}^n a^{\ts}_i\nabla a^{\ts}_i\nabla  a^{\ts}_k \mathsf{U} (a^{\ts}_k\mathsf{U})+\sum_{i,k=1}^n a^{\ts}_i a^{\ts}_i\nabla^2a^{\ts}_k \mathsf{U} (a^{\ts}_k\mathsf{U})\nonumber  \\
	&& -\sum_{i,k=1}^n a^{\ts}_k\nabla a^{\ts}_i\nabla a^{\ts}_i\mathsf{U}(a_k^{\ts}\mathsf{U})-\sum_{i,k=1}^n a^{\ts}_k a^{\ts}_i \nabla^2a^{\ts}_i\mathsf{U}(a_k^{\ts}\mathsf{U})\nonumber \\
&&+\sum_{i=1}^n\sum_{\hat k=1}^{n+m}\Big[ (aa^{\ts} \nabla\log \pi)_{\hat k} \nabla_{\hat k} a^{\ts}_i\mathsf{U}-a_i^{\ts}\nabla (aa^{\ts}\nabla\log \pi)_{\hat k} \mathsf U_{\hat k}\Big]a^{\ts}_i\mathsf{U} \nonumber \\
	&&+\nabla a\circ \Big(\sum_{k=1}^n\Big[ a^{\ts}\nabla a^{\ts}_k\mathsf{U}-a^{\ts}_k \nabla a^{\ts}\mathsf{U})\Big] a^{\ts}_k\mathsf{U}\Big)-\la  \left(a^{\ts} \nabla^2 a\circ  (a^{\ts}\mathsf{U})\right),a^{\ts}\mathsf{U}\ra_{\hR^n},\nonumber\\
  \label{tenser z}\mathfrak R_{z}(\mathsf{U},\mathsf{U})&=& \sum_{i=1}^n\sum_{k=1}^m a^{\ts}_i\nabla a^{\ts}_i\nabla  z^{\ts}_k \mathsf{U} (z^{\ts}_k\mathsf{U})+\sum_{i,k=1}^n a^{\ts}_i a^{\ts}_i\nabla^2z^{\ts}_k \mathsf{U} (z^{\ts}_k\mathsf{U}) \nonumber \\
	&& -\sum_{i=1}^n\sum_{k=1}^m z^{\ts}_k\nabla a^{\ts}_i\nabla a^{\ts}_i\mathsf{U}(z_k^{\ts}\mathsf{U})-\sum_{i,k=1}^n z^{\ts}_k a^{\ts}_i \nabla^2a^{\ts}_i\mathsf{U}(z_k^{\ts}\mathsf{U})\nonumber \\
&&+\sum_{k=1}^m\sum_{\hat k=1}^{n+m}\Big[ (aa^{\ts} \nabla\log \pi)_{\hat k} \nabla_{\hat k} z^{\ts}_k\mathsf{U}-z_k^{\ts}\nabla (aa^{\ts}\nabla\log \pi)_{\hat k} \mathsf U_{\hat k}\Big]z^{\ts}_k\mathsf{U}\nonumber \\
	&&+\nabla a\circ \Big(\sum_{k=1}^m\Big[ a^{\ts}\nabla z^{\ts}_k\mathsf{U}-z^{\ts}_k \nabla a^{\ts}\mathsf{U})\Big] z^{\ts}_k\mathsf{U}\Big)-\la  \left(z^{\ts} \nabla^2 a\circ  (a^{\ts}\mathsf{U})\right),z^{\ts}\mathsf{U}\ra_{\hR^m},\nonumber\\ 
\label{tensor R Psi}\mathfrak{R}_{\pi}(\mathsf{U},\mathsf{U})
	&=&2\sum_{k=1}^m \sum_{i=1}^n\left[\nabla z^{\ts}_{k} z^{\ts}_{k} \nabla a^{\ts}_{i}\mathsf{U} a^{\ts}_{i}\mathsf{U}+z^{\ts}_{k}\nabla z^{\ts}_{k} \nabla a^{\ts}_{i}\mathsf{U} a^{\ts}_{i}\mathsf{U}+z^{\ts}_{k} z^{\ts}_{k} \nabla^2 a^{\ts}_{i} \mathsf{U}   a^{\ts}_{i}\mathsf{U}   \right]\nonumber \\
&&+2\sum_{k=1}^m \sum_{i=1}^n\Big[(z^{\ts}_{k}\nabla a^{\ts}_i\mathsf{U} )^2+ (z^{\ts}\nabla\log\pi)_k \left[ z^{\ts}_{ k}\nabla a^{\ts}_i\mathsf{U} a^{\ts}_i\mathsf{U} \right]\Big] \nonumber\\
&&-2\sum_{j=1}^m\sum_{l=1}^n\left[ \nabla a^{\ts}_{l} a^{\ts}_{l}\nabla  z^{\ts}_{j}\mathsf{U} z^{\ts}_{j}\mathsf{U}+ a^{\ts}_{l}\nabla a^{\ts}_{l}\nabla z^{\ts}_{j}\mathsf{U} z^{\ts}_{j}\mathsf{U}+a^{\ts}_{l}a^{\ts}_{l} \nabla^2z^{\ts}_{j}\mathsf{U} z^{\ts}_{j} \mathsf{U} \right] \nonumber\\
&&-2\sum_{j=1}^m\sum_{l=1}^n \left[ (a^{\ts}_{l}\nabla z^{\ts}_j \mathsf{U})^2  +(a^{\ts}\nabla\log\pi)_l \Big[ a^{\ts}_{l}\nabla z^{\ts}_{j}\mathsf{U} z^{\ts}_{j}\mathsf{U}\Big]  \right], \nonumber\\ 
\mathfrak R_{\gamma_a}(\mathsf{U},\mathsf{U})&=& \frac{1}{2}\sum_{\hat k=1}^{n+m}\gamma_{\hat k} \la \mathsf U,\nabla_{\hat k}(aa^{\ts})\mathsf U\ra -  \la \nabla \gamma \mathsf{U},aa^{\ts}\mathsf{U}\ra_{\mathbb R^{n+m}},\nonumber\\ 
\mathfrak R_{\gamma_z}(\mathsf{U},\mathsf{U})&=& \frac{1}{2}\sum_{\hat k=1}^{n+m}\gamma_{\hat k} \la \mathsf U,\nabla_{\hat k}(zz^{\ts})\mathsf U\ra -  \la \nabla \gamma \mathsf{U},zz^{\ts}\mathsf{U}\ra_{\mathbb R^{n+m}}.\nonumber
\eea
 We define vector functions $\mathsf D:\hR^{n+m}\rightarrow \mathbb R^{n^2\times 1}$, and $\mathsf E:\hR^{n+m}\rightarrow \mathbb R^{(n\times m)\times 1}$ as below,
\bea\label{vector D E}
\mathsf D_{ik}= \sum_{\hat i, \hat k=1}^{n+m}a^{\ts}_{i\hat i}\pa_{x_{\hat i}} a^{\ts}_{k\hat k}\mathsf U_{\hat k}, \quad \mathsf E_{ik}=\sum_{\hat i,\hat k=1}^{n+m}a^{\ts}_{i\hat i}\pa_{x_{\hat i}} z^{\ts}_{k\hat k} \mathsf U_{\hat k}.
\eea 
For $\beta\in\mathbb R$, the vector functions $\Lambda_1: \hR^{n+m}\rightarrow \mathbb R^{n^2\times 1}$ and $\Lambda_2:\hR^{n+m}\rightarrow \mathbb R^{(n\times m)\times 1}$ are defined as, for $i,l\in\{1,\cdots,n\}$, 
\beaa 
(\Lambda_1)_{il}&=&\sum_{k=1}^n[\sum_{i'=1}^{n+m} a^{\ts}_{ii'}\lambda^{i'k}_l-\sum_{k'=1}^{n+m}a^{\ts}_{kk'} \lambda^{k'i}_l ]a^{\ts}_k\mathsf U+ \sum_{k=1}^m\Big(\sum_{i'=1}^{n+m}  a^{\ts}_{ii'}\omega^{i'k}_l -\sum_{k'=1}^{n+m} z^{\ts}_{kk'}\lambda^{k'i}_l\Big)z^{\ts}_k\mathsf U \\
&&- \sum_{k=1}^m \sum_{i'=1}^{n+m} a^{\ts}_{ii'} \omega^{i'k}_l  z^{\ts}_k\mathsf U -\frac{\beta}{2} \alpha_l (a^{\ts}_{ i}\mathsf U)+\frac{\beta}{2}   \la \mathsf U,\gamma \ra \mathbf{1}_{\{i=l\}}+\mathsf D_{il},
\eeaa 
and for $i\in \{1,\cdots,n\}$, $l\in\{1,\cdots,m\}$, 
\beaa (\Lambda_2)_{il}
&=&\sum_{k=1}^n[ \sum_{i'=1}^{n+m} a^{\ts}_{ii'} \lambda ^{i'k}_{l+n}-\sum_{k'=1}^{n+m}a^{\ts}_{kk'}\lambda^{k'i}_{l+n}]a^{\ts}_k\mathsf U+\sum_{k=1}^m\Big(\sum_{i'=1}^{n+m}  a^{\ts}_{ii'}\omega^{i'k}_{l+n} -\sum_{k'=1}^{n+m} z^{\ts}_{kk'} \lambda_{l+n}^{k'i}\Big)z^{\ts}_k\mathsf U \\
&&+\sum_{k=1}^n\sum_{k'=1}^{n+m}  z^{\ts}_{lk'} \lambda^{k'k}_i a^{\ts}_{k} \mathsf U+ z^{\ts}_{l}\nabla a^{\ts}_i \mathsf U-\sum_{k=1}^m\sum_{i'=1}^{n+m}a^{\ts}_{ii'} \omega^{i'k}_{l+n}  z^{\ts}_k\mathsf U -a^{\ts}_{i}\nabla z^{\ts}_{l} \mathsf U-\frac{\beta}{2} \alpha_{l+n}(a^{\ts}_{ i}\mathsf U)+\mathsf E_{il}.
\eeaa 
For each indices $i, k, \hat k$, assume that there exist smooth functions $\lambda^{i'k}_{l}$, $\omega^{i'k}_l$ and $\alpha_l$ for $l=1,\cdots,n+m$, 
\beaa 
\nabla_{i'}a^{\ts}_{k\hat k}&=&\sum_{l=1}^n\lambda^{i'k}_la^{\ts}_{l\hat k}+\sum_{l=1}^{m}\lambda^{i'k}_{l+n} z^{\ts}_{l\hat k},\quad 
 \nabla_{i'}z^{\ts}_{k\hat k}=\sum_{l=1}^n\omega^{i'k}_la^{\ts}_{l\hat k}+\sum_{l=1}^{m}\omega^{i'k}_{l+n} z^{\ts}_{l\hat k},
 \eeaa 
 and $\gamma_{\hat k}=\sum_{l=1}^n\alpha_la^{\ts}_{l\hat k}+\sum_{l=1}^m\alpha_{l+n}z^{\ts}_{l\hat k}$. For a vector function $\gamma\in\mathbb R^{n+m}$,  we define $\nabla\gamma\in \mathbb R^{(n+m)\times (n+m)}$ with $(\nabla\gamma)_{ij}=\nabla_i\gamma_j$.
\end{definition}

\end{document}